\newtheorem{theorem}[subsubsection]{Theorem}
\newtheorem{proposition}[subsubsection]{Proposition}
\newtheorem{corollary}[subsubsection]{Corollary}
\newtheorem{lemma}[subsubsection]{Lemma}
\newtheorem{conjecture}[subsubsection]{Conjecture}
\theoremstyle{definition}
\newtheorem{definition}[subsubsection]{Definition}
\newtheorem{remark}[subsubsection]{Remark}
\newtheorem{example}[subsubsection]{Example}
\numberwithin{equation}{subsubsection}
\newcommand{\dimtot}{\mathrm{dimtot}}
\newcommand{\rank}{\mathrm{rank}}
\newcommand{\bQl}{\overline{\mathbb{Q}}_\ell}
\begin{document}

\title[relative twist formula]{On the relative twist formula of $\ell$-adic sheaves}

\author{Enlin Yang}
\address{Fakult\"at f\"ur Mathematik, Universit\"at Regensburg, 93040 Regensburg, Germany}
\email{enlin.yang@mathematik.uni-regensburg.de, yangenlin0727@126.com}

\author{Yigeng Zhao}
\address{Fakult\"at f\"ur Mathematik, Universit\"at Regensburg, 93040 Regensburg, Germany}
\email{yigeng.zhao@mathematik.uni-regensburg.de}

\date{\today}

\begin{abstract}
We propose a conjecture on the relative twist formula of $\ell$-adic sheaves, which can be viewed as a generalization of Kato-Saito's conjecture. We verify this conjecture under some transversal assumptions.

We also define a relative cohomological characteristic class and prove that its formation is compatible with proper push-forward.
A conjectural relation is also given between the relative twist formula and the relative cohomological characteristic class.
\end{abstract}

\subjclass[2010]{Primary 14F20; Secondary 11G25, 11S40.}
\maketitle

\tableofcontents

\section{Introduction}

As an analogy of the theory of $D$-modules, Beilinson \cite{Bei16} and T.~Saito \cite{Sai16} define the singular support and the characteristic cycle of an $\ell$-adic sheaf on a smooth variety respectively. As an application of their  theory, we prove a twist formula of epsilon factors in \cite{UYZ}, which is a modification of a conjecture due to Kato and T.~Saito\cite[Conjecture 4.3.11]{KS08}.
\subsection{Kato-Saito's conjecture}
\subsubsection{}
Let $X$ be a smooth projective scheme purely of dimension $d$
over a finite field $k$ of characteristic $p$.  
Let $\Lambda$ be a finite field of characteristic $\ell\neq p$ or $\Lambda=\overline{\mathbb Q}_\ell$.
Let  $\mathcal F\in D_c^b(X,\Lambda)$ and $\chi(X_{\bar{k}},\mathcal F)$ be the Euler-Poincar\'e characteristic of $\mathcal F$.
The Grothendieck  $L$-function $L(X,\mathcal F, t)$ satisfies the following functional equation
\begin{equation}\label{eqYZ:fe}
L(X,\mathcal F, t)=\varepsilon(X,\mathcal F)\cdot t^{-\chi(X_{\bar{k}},\mathcal F)}\cdot L(X, D(\mathcal F),t^{-1}),
\end{equation}
where $D(\mathcal F)$ is the Verdier dual  $R\mathcal Hom(\mathcal F,Rf^!\Lambda)$ of $\mathcal F$,  $f\colon X\rightarrow {\rm Spec}k$ is the structure morphism and
\begin{equation}
\varepsilon(X,\mathcal F)=\det(-\mathrm{Frob}_k;R\Gamma(X_{\bar{k}},\mathcal F))^{-1}
\end{equation}
is the epsilon factor (the constant term of the functional equation (\ref{eqYZ:fe})) and $\mathrm{Frob}_k$ is the geometric Frobenius (the inverse of the Frobenius substitution).

\subsubsection{}
In (\ref{eqYZ:fe}), both $\chi(X_{\bar{k}},\mathcal F)$ and $\varepsilon(X,\mathcal F)$ are related to ramification theory. 
Let $cc_{X/k}(\mathcal F)=0_X^!(CC(\mathcal F, X/k))\in CH_0(X)$ be the characteristic class of $\mathcal F$ (cf. \cite[Definition 5.7]{Sai16}), where $0_X\colon X\to T^\ast X$ is the zero section and $CC(\mathcal F, X/k)$ is the characteristic cycle of $\mathcal F$.
Then $\chi(X_{\bar{k}},\mathcal F)={\rm deg} (cc_{X/k}(\mathcal F))$ by the index formula \cite[Theorem 7.13]{Sai16}. 
The following theorem proved in \cite{UYZ} gives a relation between  $\varepsilon(X,\mathcal F)$ and $cc_{X/k}(\mathcal F)$, which is a modified version of the formula conjectured by Kato and T. Saito in \cite[Conjecture 4.3.11]{KS08}.
\begin{theorem}[Twist formula, {\cite[Theorem 1.5]{UYZ}}]\label{thm:uyz}
We have
\begin{equation}\label{eqYZ:ep}
\varepsilon(X,\mathcal F\otimes\mathcal G)=\varepsilon(X,\mathcal F)^{{\rm rank}\mathcal G}\cdot \det\mathcal G(\rho_X(-cc_{X/k}(\mathcal F)))\qquad{\rm in}~\Lambda^\times,
\end{equation}
where  $\rho_X\colon CH_0(X)\to\pi_1^{\rm ab}(X)$ is the reciprocity map defined by sending the class $[s]$ of a closed point $s\in X$ to the geometric Frobenius $\mathrm{Frob}_s$ and $\det\mathcal G\colon \pi_1^{\rm ab}(X)\to \Lambda^\times$ is the representation associated to the smooth sheaf $\det\mathcal G$ of rank 1.

\end{theorem}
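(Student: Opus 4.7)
The plan is to reduce the statement, in two stages, to Deligne--Laumon's classical product formula for $\varepsilon$-factors on a curve. First I would reduce to the case where $\mathcal G$ has rank one. Since both sides of \eqref{eqYZ:ep} depend on $\mathcal G$ only through its rank and its determinant, and since $\varepsilon$ is multiplicative on distinguished triangles, one should be able to run a Brauer-style induction, writing $\mathcal G$ as a virtual sum of representations induced from one-dimensional characters on finite \'etale covers $\pi_i\colon X_i\to X$, and using the compatibility of $\varepsilon$-factors and of $cc_{X/k}$ with pushforward along the $\pi_i$ to reduce to the case $\mathcal G = \mathcal L$ of rank one.

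Once $\mathcal G = \mathcal L$ is smooth of rank one, the formula to prove becomes
\[
\varepsilon(X,\mathcal F\otimes \mathcal L) = \varepsilon(X,\mathcal F)\cdot \mathcal L\bigl(\rho_X(-cc_{X/k}(\mathcal F))\bigr).
\]
My strategy here is to pick a suitable fibration $f\colon X'\to C$ (for instance a Lefschetz pencil, after blowing up $X$) to a smooth curve, so chosen that the singular support $SS(\mathcal F)$ is transversal to the fibers of $f$ away from a finite bad set $T\subset C$. Away from $T$, T.~Saito's proper push-forward formula computes $CC(Rf_\ast\mathcal F)$ and $CC(Rf_\ast(\mathcal F\otimes\mathcal L))$ in terms of $f_\ast CC(\mathcal F)$, and the classical Deligne--Laumon product formula on $C$, applied to both $Rf_\ast\mathcal F$ and $Rf_\ast(\mathcal F\otimes\mathcal L)$, already yields the desired identity at the points of $C\setminus T$.

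The hard part will be analyzing the contributions at the bad points $t\in T$, where transversality fails and Saito's push-forward formula for $CC$ does not directly apply. At each such $t$, one has to identify the local $\varepsilon$-factor discrepancy between $Rf_\ast(\mathcal F\otimes\mathcal L)$ and $Rf_\ast\mathcal F$ with the character $\mathcal L$ evaluated on the part of the $0$-cycle $cc_{X/k}(\mathcal F)=0_X^!CC(\mathcal F)$ supported in the fiber $f^{-1}(t)$. I expect this step to hinge on T.~Saito's refined Milnor formula, used to match local ramification invariants of $\mathcal F$ with those of $\mathcal F\otimes\mathcal L$ at wild points lying over $t$. Summing the resulting local contributions over all $t\in T$, and invoking the additivity of the reciprocity map $\rho_X$ over the closed points of $X$, should reassemble the global identity \eqref{eqYZ:ep}.
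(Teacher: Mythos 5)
First, a caveat on the comparison: the paper you are reading does not actually prove this statement. It is imported verbatim from \cite{UYZ}, and the introduction only records that the proof there rests on the theory of singular supports and characteristic cycles together with Laumon's product formula. Your outline---reduction in $\mathcal G$, fibration to a curve by a pencil, Saito's Milnor formula at isolated characteristic points, Laumon's product formula on the base---is indeed the skeleton of the proof in \cite{UYZ}, so the global strategy is right. But two steps as you state them would not go through.

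The decisive gap is in your treatment of the curve $C$. Since $\mathcal L$ lives on $X$ and is not pulled back from $C$, the local factors of $Rf_\ast(\mathcal F\otimes\mathcal L)$ on $C$ are \emph{not} obtained from those of $Rf_\ast\mathcal F$ by a twist on $C$, so ``applying Deligne--Laumon to both and comparing'' does not yield the identity even away from $T$; and at the bad points the Milnor formula only controls total dimensions of vanishing cycles (hence Swan conductors and the power of $t$ in the functional equation), never the $\varepsilon$-constants themselves. The missing local input is the Deligne--Henniart twist formula for local $\varepsilon$-factors \cite{DH81} (equivalently, Deligne's formula for unramified twists): because $\mathcal L$ is lisse near each isolated characteristic point $x$, one has $R\Phi_x(\mathcal F\otimes\mathcal L,f)\cong R\Phi_x(\mathcal F,f)\otimes\mathcal L_{\bar x}$ with $\mathcal L_{\bar x}$ unramified, and \cite{DH81} converts this into the factor $\det\mathcal L(\mathrm{Frob}_x)^{-m_x}$ with $m_x$ the Milnor number; only then do the Milnor formula and the index formula reassemble these into $\det\mathcal L(\rho_X(-cc_{X/k}(\mathcal F)))$. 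Secondly, your Brauer induction in $\mathcal G$ is circular as stated: to exploit $\mathcal G=\pi_{i\ast}\mathcal L_i$ you must relate $\varepsilon(X_i,\pi_i^\ast\mathcal F)$ back to $\varepsilon(X,\mathcal F)$, which is the twist formula for $\pi_{i\ast}\Lambda$. One must instead work with the rank-zero virtual classes $\pi_{i\ast}\mathcal L_i-\pi_{i\ast}\Lambda$, use compatibility of $\rho_X$ with the transfer and of the characteristic class with \'etale pullback, and, when $\Lambda=\overline{\mathbb Q}_\ell$, first reduce to finite coefficients so that the monodromy of $\mathcal G$ is finite. In \cite{UYZ} this reduction is avoided altogether because Deligne--Henniart handles $\mathcal G$ of arbitrary rank directly.
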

When $\mathcal F$ is the constant sheaf $\Lambda$, this is proved by S. ~Saito \cite{SS84}. If $\mathcal F$ is a  smooth sheaf  on an open dense subscheme $U$ of $X$ such that $\mathcal F$ is tamely ramified along $D=X\setminus U$,  then Theorem \ref{thm:uyz} is a consequence of \cite[Theorem 1]{Sai93}.
In \cite{Vi09a,Vi09b}, Vidal proves a similar result on a proper smooth surface over a finite field of characteristic $p>2$ under certain technical assumptions. Our proof of Theorem \ref{thm:uyz} is based on the following theories: one is the theory of singular support \cite{Bei16} and  characteristic cycle \cite{Sai16}, and another is Laumon's product formula \cite{Lau87}. 

 \subsection{$\varepsilon$-factorization}
 \subsubsection{}Now we assume that $X$ is a smooth projective geometrically connected curve of genus $g$ over a finite field $k$ of characteristic $p$. Let $\omega$ be a non-zero rational $1$-form on $X$ and $\mathcal F$ an $\ell$-adic sheaf on $X$.
The following formula is conjectured by Deligne and proved by Laumon \cite[3.2.1.1]{Lau87}:
\begin{equation}\label{eqYZ:product}
\varepsilon(X,\mathcal F)=p^{[k:\mathbb F_p](1-g)\rank (\mathcal F)}\prod_{v\in|X|}\varepsilon_v(\mathcal F|_{X_{(v)}},\omega).
\end{equation}
For higher dimensional smooth scheme $X$ over $k$,
it is still an open question whether there is an   $\varepsilon$-factorization formula (respectively a geometric  $\varepsilon$-factorization formula) for $\varepsilon(X,\mathcal F)$ (respectively $\det R\Gamma(X,\mathcal F)$).

\subsubsection{}
In \cite{Bei07},  Beilinson develops the theory of topological epsilon factors using $K$-theory spectrum and he asks  whether his construction  admits a motivic ($\ell$-adic or de Rham) counterpart.
%Let $R$ be a commutative ring. 
%Let $\mathcal F$ be a perfect constructible complex
%of sheaves of $R$-modules on a compact real analytic manifold $M$. In loc.cit., he gives a Dubson-Kashiwara-style description of $\det R\Gamma(M,\mathcal F)$, 
For de Rham cohomology, such a construction is given by  Patel in \cite{Pat12}. Based on \cite{Pat12},  Abe and Patel prove a similar twist formula in \cite{AP17} for global de Rham epsilon factors in the classical setting of $\mathcal D_X$-modules on smooth projective varieties over a field of characteristic zero.  In the $\ell$-adic situation, such a geometric $\varepsilon$-factorization formula is still open even if  $X$ is a curve. Since the classical local $\varepsilon$-factors depend on an additive character of the base field, a satisfied geometric $\varepsilon$-factorization theory will lie in an appropriate gerbe rather than be a super graded line (cf. \cite{Bei07, Pat12}).
\subsubsection{}More generally, we could also ask similar questions in a relative situation. Now let $f\colon X\rightarrow S$ be a proper morphism between smooth schemes over $k$. Let $\mathcal F$ be an $\ell$-adic sheaf on $X$ such that $f$ is universally locally acyclic relatively to $\mathcal F$.
Under these assumptions, we know that $Rf_\ast\mathcal F$ is locally constant on $S$. Now
we can ask if there is an analogue geometric $\varepsilon$-factorization for the determinant $\det Rf_\ast\mathcal F$. This problem is far beyond the authors' reach at this moment. But, similar to \eqref{eqYZ:ep}, we may consider  twist formulas for $\det Rf_\ast\mathcal F$.
One of the purposes of this paper is to formulate such a twist formula and prove it under  certain assumptions.
\subsubsection{Relative twist formula}
Let $S$ be a regular Noetherian scheme  over $\mathbb Z[1/\ell]$ and $f\colon X\to S$ a proper smooth morphism purely of relative dimension $n$.
Let $\mathcal F\in D_c^b(X,\Lambda)$ such that 
$f$ is universally locally acyclic relatively to $\mathcal F$.
Then we conjecture that (see Conjecture \ref{conj:rtf}) there exists a unique cycle class $cc_{X/S}(\mathcal F)\in{\rm CH}^n(X)$ such that for any locally constant and constructible sheaf $\mathcal G$ of $\Lambda$-modules on $X$, we have an isomorphism of smooth sheaves of rank 1 on $S$
\begin{align}\label{intro:rtf}
\det Rf_\ast(\mathcal F\otimes\mathcal G)\cong(\det Rf_\ast\mathcal F)^{\otimes\rank\mathcal G}\otimes \det\mathcal G(cc_{X/S}(\mathcal F) )
\end{align} 
where $\det\mathcal G(cc_{X/S}(\mathcal F)) $ is a smooth sheaf of rank 1 on $S$ (see \ref{subsub:defDetG} for the definition).
We call \eqref{intro:rtf} the relative twist formula. 
As an evidence, we prove a special case of the above conjecture in Theorem \ref{thm:rtf}.
It is also interesting to consider a similar relative twist formula for de Rham epsilon factors in the sense of \cite{AP17}. 
We will pursue this question elsewhere.

\subsubsection{}If $S$ is moreover a smooth connected scheme of dimension $r$ over a perfect field $k$, we construct a candidate for $cc_{X/S}(\mathcal F)$ in Definition \ref{def:rcclass}. We also relate the relative characteristic class $cc_{X/S}(\mathcal F)$ to the total characteristic class of $\mathcal F$. Let $K_0(X,\Lambda)$ be the Grothendieck group of $D^b_c(X,\Lambda)$.
In \cite[Definition 6.7.2]{Sai16}, T.~Saito defines the following morphism
\begin{align}
cc_{X,\bullet}\colon K_0(X,\Lambda)\to {\rm CH}_\bullet(X)=\bigoplus_{i= 0}^{r+n} {\rm CH}_i(X),
\end{align}
which sends $\mathcal F\in D_c^b(X,\Lambda)$ to the total characteristic class $cc_{X,\bullet}(\mathcal F)$ of $\mathcal F$.
Under the assumption  that $f\colon X\to S$ is $SS(\mathcal F, X/k)$-transversal,
we show that $(-1)^r\cdot cc_{X/S}(\mathcal F)=cc_{X, r}(\mathcal F)$ in Proposition \ref{prop:identificationoftwocc}.

\subsubsection{}
Following Grothendieck \cite{Gro77}, it's natural to ask whether the following diagram
\begin{align}\label{eq:fccfintro}
\begin{gathered}
\xymatrix{
K_0(X,\Lambda)\ar[d]_{f_\ast}\ar[r]^{cc_{X,\bullet}}&CH_\bullet(X)\ar[d]^{f_*}\\
K_0(Y,\Lambda)\ar[r]^{cc_{Y,\bullet}}&CH_\bullet(Y)
}
\end{gathered}
\end{align}
is commutative or not for any proper map $f\colon X\rightarrow Y$ between smooth schemes over $k$.
If $k=\mathbb C$, the diagram (\ref{eq:fccfintro}) is commutative by \cite[Theorem A.6]{Gin86}.
By the philosophy of Grothendieck, the answer is no in general if ${\rm char}(k)>0$ (cf. \cite[Example 6.10]{Sai16}).  
If $k$ is a finite field and if $f\colon X\to Y$ is moreover projective, as a corollary of Theorem \ref{thm:uyz}, we prove in \cite[Corollary 5.26]{UYZ} that the degree zero part of \eqref{eq:fccfintro} commutes. In general, motivated by the conjectural formula \eqref{intro:rtf}, we propose the following question.
Let $f\colon X\rightarrow S$ and $g\colon Y\to S$ be  smooth morphisms.
%Assume that $f$ is $SS(\mathcal F, X/k)$-transversal. 
%Then for any proper morphism $h\colon X\to Y$ over $S$, 
%we conjecture that the following identity holds (see Conjecture \ref{con:pushccr})
%\begin{align}\label{eq:introPushccr}
%h_\ast(cc_{X,r}(\mathcal F))=cc_{Y,r}(Rh_\ast\mathcal F)\quad{\rm in}\quad CH_r(Y).
%%\begin{gathered}
%%\xymatrix@C=3pc{
%%K_0(X,\Lambda)\ar[d]_{h_\ast}\ar[r]^-{~cc_{X,r}~}&CH_r(X)\ar[d]^{h_*}\\
%%K_0(Y,\Lambda)\ar[r]^-{~cc_{Y,r}~}&CH_r(Y).
%%}
%%\end{gathered}
%\end{align}
Let $D_{c}^b(X/S,\Lambda)$ be the thick subcategory of $D_c^b(X,\Lambda)$ consists of $\mathcal F\in D_c^b(X,\Lambda)$ such that $f$ is $SS(\mathcal F, X/k)$-transversal. 
Let $K_0(X/S,\Lambda)$ be the Grothendieck group of $D_{c}^b(X/S,\Lambda)$.
Then for any proper morphism $h\colon X\to Y$ over $S$,
we conjecture that the following diagram commutes (see Conjecture \ref{con:pushccr})
\begin{align}\label{eq:introPushccr}
\begin{gathered}
\xymatrix@C=3pc{
K_0(X/S,\Lambda)\ar[d]_{h_\ast}\ar[r]^-{~cc_{X,r}~}&CH_r(X)\ar[d]^{h_*}\\
K_0(Y/S,\Lambda)\ar[r]^-{~cc_{Y,r}~}&CH_r(Y).
}
\end{gathered}
\end{align}

\subsubsection{}As an evidence for \eqref{eq:introPushccr},
we  construct a relative cohomological characteristic  class 
\begin{align}ccc_{X/S}(\mathcal F)\in H^{2n}(X,\Lambda(n))
\end{align}
in Definition \ref{def:ccc} if $X\rightarrow S$ is smooth and $SS(\mathcal F, X/k)$-transversal. 
We prove that the formation of  $ccc_{X/S}(\mathcal F)$ is compatible with proper push-forward (see Corollary \ref{cor:pushccc} for a precise statement).
Similar to \cite[ Conjecture 6.8.1]{Sai16}, we conjecture that we have the following equality  (see Conjecture \ref{conj:ccequality})
\begin{equation}
{\rm cl}(cc_{X/S}(\mathcal F))=ccc_{X/S}(\mathcal F) \quad{\rm in}\quad  H^{2n}(X, \Lambda(n))
\end{equation}
where ${\rm cl}\colon {\rm CH}^n(X)\rightarrow  H^{2n}(X, \Lambda(n))$ is the cycle class map.

\subsection*{Acknowledgements}
Both authors are partially supported by the DFG through CRC 1085 \emph{Higher Invariants} (Universit\"at Regensburg). 
The authors are thank Naoya Umezaki for sharing ideas and for helpful  discussion during writing the paper \cite{UYZ}. 
The authors are also thank Professor Weizhe Zheng for discussing his paper \cite{Zh15}, and Haoyu Hu for his valuable comments.
The first author would like to thank his advisor Professor Linsheng Yin (1963-2015) for his constant encouragement during 2010-2015.

\subsection*{Notation and Conventions}

\begin{enumerate}
\item Let $p$ be a prime number and $\Lambda$ be a finite field of characteristic $\ell\neq p$ or $\Lambda=\overline{\mathbb Q}_\ell$.

\item We say that a complex $\mathcal F$ of \'etale sheaves of $\Lambda$-modules on a scheme $X$ over $\mathbb Z[1/\ell]$ is {\it constructible} (respectively {\it smooth}) if the cohomology sheaf $\mathcal H^q(\mathcal F)$ is constructible  for every $q$ and if $\mathcal H^q(\mathcal F)=0$ except finitely many $q$ (respectively moreover $\mathcal H^q(\mathcal F)$ is  locally constant for all $q$).

\item For a scheme $S$ over $\mathbb Z[1/{\ell}]$,
let $D^b_c(S,\Lambda)$ be the triangulated category of bounded complexes of $\Lambda$-modules with constructible cohomology groups on $S$
and let $K_0(S,\Lambda)$ be the Grothendieck group of $D^b_c(S,\Lambda)$. 

\item For a scheme $X$, we denote by  $|{X}|$  the set of closed points of  $X$.
\item For any smooth morphism $X\rightarrow S$, we denote by $T^\ast_X(X/S)\subseteq T^\ast (X/S)$ the zero section of the relative cotangent bundle $T^\ast (X/S)$ of $X$ over $S$. If $S$ is the spectrum of a field, we simply denote  $T^\ast (X/S)$ by $T^\ast X$. 
\end{enumerate}

\section{Relative twist formula}
\subsection{Reciprocity map}
\subsubsection{} For a smooth proper variety $X$ purely of dimension $n$ over a finite field $k$ of characteristic $p$, the reciprocity map  $ \rho_X\colon {\rm CH}^n(X) \to \pi^{\rm ab}_1(X)$ is given by sending the class $[s]$ of closed point $s\in X$ to the geometric Frobenius ${\rm Frob}_s$ at $s$. The map  $ \rho_X$ is injective with dense image \cite{KS83}.
\subsubsection{}
Let $S$ be a regular Noetherian scheme over $\mathbb Z[1/\ell]$ and $X$ a smooth proper scheme purely of relative dimension $n$ over $S$. By \cite[Proposition 1]{Sai94},
 there exists a unique way to attach a pairing 
\begin{align}\label{eq:1:100}
{\rm CH}^n(X)\times \pi_1^{\rm ab}(S)\to \pi_1^{\rm ab}(X) 
\end{align}
satisfying the following two conditions:
\begin{enumerate}
\item When $S={\rm Spec}k$ is a point, for a closed point $x\in X$, the pairing with the class $[x]$ coincides with the inseparable degree times the Galois transfer ${\rm tran}_{k(x)/k}$ (cf.\cite[1]{Tat79})  followed by $i_{x\ast}$ for $i_x\colon x\to X$
\[
{\rm Gal}(k^{\rm ab}/k)\xrightarrow{{\rm tran}_{k(x)/k}\times[k(x)\colon k]_i}
{\rm Gal}(k(x)^{\rm ab}/k(x))\xrightarrow{i_{x\ast}}\pi_1^{\rm ab}(X).
\]

\item For any point $s\in S$, the following diagram commutes
\[
\xymatrix{
{\rm CH}^n(X)\ar[d]\ar@{}|\times[r] &\pi_1^{\rm ab}(S)\ar[r]&\pi_1^{\rm ab}(X)\\
{\rm CH}^n(X_s)\ar@{}|\times[r]&\pi_1^{\rm ab}(s)\ar[u]\ar[r]&\pi_1^{\rm ab}(X_s).\ar[u]
}
\]

%For the definition of the Galois transfer ${\rm tr}_{k(x)/k}$, we refer to \cite[1]{Tate79}.
\end{enumerate}

\subsubsection{}\label{subsub:defDetG}
For any locally constant and constructible sheaf $\mathcal G$ of $\Lambda$-modules on $X$ and any $z\in {\rm CH}^n(X)$, we have a map
\begin{align}\label{eq:2:100}
\pi^{\rm ab}_1(S)\xrightarrow{(z,\bullet)}\pi_1^{\rm ab}(X)\xrightarrow{\det\mathcal G} \Lambda^\times
\end{align}
where $(z,\bullet)$ is the map determined by the paring \eqref{eq:1:100} and $\det\mathcal G$ is the representation associated to the locally constant sheaf
$\det\mathcal G$ of rank 1.
The composition $\det\mathcal G\circ (z,\bullet)\colon \pi^{\rm ab}_1(S)\to \Lambda^\times$ determines a locally constant and constructible sheaf of rank 1 on $S$, which we simply denote by $\det\mathcal G(z)$.
Now we propose the following conjecture.
\begin{conjecture}[Relative twist formula]\label{conj:rtf}
Let $S$ be a regular Noetherian scheme  over $\mathbb Z[1/\ell]$ and $f\colon X\to S$ a smooth proper morphism purely of relative dimension $n$.
Let $\mathcal F\in D_c^b(X,\Lambda)$ such that 
$f$ is universally locally acyclic relatively to $\mathcal F$.
Then there exists a unique cycle class $cc_{X/S}(\mathcal F)\in{\rm CH}^n(X)$ such that for any locally constant and constructible sheaf $\mathcal G$ of $\Lambda$-modules on $X$, we have an isomorphism
\begin{align}\label{eq:conjrtf100}
\det Rf_\ast(\mathcal F\otimes\mathcal G)\cong(\det Rf_\ast\mathcal F)^{\otimes\rank\mathcal G}\otimes \det\mathcal G(cc_{X/S}(\mathcal F)) \; \   \; {\rm in\ } K_0(S,\Lambda),
\end{align} 
where $K_0(S,\Lambda)$ is the Grothendieck group of $D_c^b(S,\Lambda)$.
%Moreover, the formation of the cycle class $cc_{X/S}\mathcal F$ is compatible with proper push-forward: For $X_1$ and $X_2$ are proper and smooth schemes over $S$, and for any $\mathcal K\in D_c^b(X_1,\Lambda)$ and any proper $S$-morphism $h\colon X_1\to X_2$, we have
%\begin{align}
%h_\ast (cc_{X_1/S}\mathcal K)=cc_{X_2/S}Rh_\ast\mathcal K .
%\end{align}
\end{conjecture}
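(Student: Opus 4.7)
The plan is to reduce Conjecture \ref{conj:rtf} to the absolute twist formula of Theorem \ref{thm:uyz} applied on closed fibers, using universal local acyclicity to control how both sides of \eqref{eq:conjrtf100} vary over $S$. Since $f$ is proper and universally locally acyclic with respect to $\mathcal F$ (and hence also with respect to $\mathcal F \otimes \mathcal G$ for any locally constant constructible $\mathcal G$), both $Rf_\ast\mathcal F$ and $Rf_\ast(\mathcal F \otimes \mathcal G)$ are locally constant on $S$, and their formation commutes with arbitrary base change. Consequently each side of \eqref{eq:conjrtf100} is a smooth rank-$1$ sheaf on $S$, entirely determined by its associated character $\pi_1^{\rm ab}(S) \to \Lambda^\times$, and the desired isomorphism can be tested Frobenius-by-Frobenius after reducing by standard Noetherian-limit and spreading-out arguments to the case where $S$ is of finite type over a finite field.

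For the candidate class, in the geometric setting where $S$ is smooth over a perfect field $k$ I would take $cc_{X/S}(\mathcal F) := (-1)^r \cdot cc_{X,r}(\mathcal F)$ following Definition \ref{def:rcclass}; this is justified by Proposition \ref{prop:identificationoftwocc}, which under the $SS(\mathcal F,X/k)$-transversal hypothesis identifies the restriction of this class on each fiber with the absolute characteristic class $cc_{X_s/k(s)}(\mathcal F|_{X_s})$. With this choice, Theorem \ref{thm:uyz} applied to the smooth projective fiber $X_s/k(s)$ yields
\begin{equation*}
\varepsilon(X_s, \mathcal F|_{X_s} \otimes \mathcal G|_{X_s}) = \varepsilon(X_s, \mathcal F|_{X_s})^{\rank \mathcal G} \cdot \det \mathcal G \bigl( \rho_{X_s}(-cc_{X_s/k(s)}(\mathcal F|_{X_s})) \bigr),
\end{equation*}
which is precisely the Frobenius-trace identity at $s$ of the two sides of \eqref{eq:conjrtf100}, thanks to condition (2) in the defining property of the pairing \eqref{eq:1:100}: the image $(cc_{X/S}(\mathcal F),{\rm Frob}_s)\in \pi_1^{\rm ab}(X)$ is pushed to the fiberwise reciprocity element $\rho_{X_s}(cc_{X_s/k(s)}(\mathcal F|_{X_s}))$. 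Uniqueness of $cc_{X/S}(\mathcal F)$ would then follow: if $z\in {\rm CH}^n(X)$ satisfies \eqref{eq:conjrtf100} for all smooth $\mathcal G$, the Frobenius values of $\det\mathcal G\circ(z,\bullet)$ at every closed $s$ and every rank-one character are pinned down by the left side, and Kato-Saito injectivity of $\rho_{X_s}$ on closed fibers, combined with standard Chow-theoretic specialization, determines $z$ uniquely.

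The main obstacle is the existence part in the full generality of the conjecture. When $S$ is only assumed regular Noetherian over $\mathbb Z[1/\ell]$ there is no ambient relative characteristic cycle theory available to produce $cc_{X/S}(\mathcal F) \in {\rm CH}^n(X)$ intrinsically, and even in the geometric case the candidate $(-1)^r cc_{X,r}(\mathcal F)$ is only known to satisfy the required fiberwise identification under $SS$-transversality. For this reason I expect the full conjecture to require new input—presumably a genuinely relative characteristic cycle theory over a general regular base, or alternatively a reformulation of the right-hand side of \eqref{eq:conjrtf100} that does not rely on an intermediate cycle class at all—while the present paper is able to settle exactly the transversal regime in Theorem \ref{thm:rtf}, where the candidate class is directly accessible.
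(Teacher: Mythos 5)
Your proposal correctly recognizes that \ref{conj:rtf} is a conjecture the paper does not prove in full, and it faithfully reproduces the paper's strategy for the special case settled in Theorem \ref{thm:rtf}: take $cc_{X/S}(\mathcal F)$ as in Definition \ref{def:rcclass}, show its restriction to a closed fiber $X_s$ recovers the absolute characteristic class $cc_{X_s}(\mathcal F|_{X_s})$, then conclude by Chebotarev density and the absolute twist formula Theorem \ref{thm:uyz}.

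One attribution is off. You credit Proposition \ref{prop:identificationoftwocc} with providing the fiberwise identification $i^!cc_{X/S}(\mathcal F)=cc_{X_s}(\mathcal F|_{X_s})$, but that proposition only identifies $cc_{X/S}(\mathcal F)$ with $(-1)^r cc_{X,r}(\mathcal F)$ as classes on $X$ itself and says nothing about restriction to fibers. The fiber compatibility is instead established directly in the proof of Theorem \ref{thm:rtf} by the chain of Chow-theoretic equalities \eqref{eqs:rtf105}, where the crucial input is the compatibility of $CC$ with the proper transversal pull-back $i^!$ along the closed immersion $X_s\hookrightarrow X$ (i.e.\ \cite[Theorem 7.6]{Sai16}), not the total-characteristic-class comparison. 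This is exactly where the ``properly $SS(\mathcal F,X/k)$-transversal'' hypothesis of Theorem \ref{thm:rtf} enters, and it is stronger than mere universal local acyclicity assumed in the conjecture---so the fiberwise identification you invoke is not available under the conjecture's hypotheses alone. Your closing assessment of the remaining obstacles (no relative characteristic cycle theory over a general regular base, the need to go beyond the transversal regime) is accurate and in line with the paper's own commentary.
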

We call this cycle class $cc_{X/S}(\mathcal F)\in{\rm CH}^n(X)$ {\it the relative characteristic class of $\mathcal F$} if it exists. If $S$ is a smooth scheme over a perfect field $k$, we construct a candidate for $cc_{X/S}(\mathcal F)$ in Definition \ref{def:rcclass}.

As an evidence, we prove a special case of the above conjecture in Theorem \ref{thm:rtf}.
In order to construct a  cycle class $cc_{X/S}(\mathcal F)$ satisfying \eqref{eq:conjrtf100}, we use 
the theory of  singular support and  characteristic cycle.

\subsection{Transversal condition and singular support}\label{relcotbundle}
\subsubsection{}
Let $f\colon X\rightarrow S$ be a smooth morphism of Noetherian schemes  over $\mathbb Z[1/\ell]$. We denote by $T^\ast(X/S)$ the vector bundle ${\rm Spec}(\mathrm{Sym}_{\mathcal{O}_X}(\Omega^{1}_{X/S})^{\vee})$ on $X$ and call it {\it the relative cotangent bundle on} $X$ {\it with respect to} $S$. We denote by $T^*_X(X/S)=X$ the zero-section of
$T^\ast(X/S)$. A constructible subset $C$ of $T^\ast(X/S)$ is called {\it conical} if $C$ is invariant under the canonical $\mathbb G_m$-action on $T^\ast(X/S)$.

\begin{definition}[{\cite[\S 1.2]{Bei16} and \cite[\S 2]{HY17}}]
Let $f\colon X\rightarrow S$ be a smooth morphism of Noetherian schemes  over $\mathbb Z[1/\ell]$ and $C$ a closed conical subset of $T^\ast(X/S)$.
Let $Y$ be a Noetherian scheme smooth over $S$ and $h\colon Y\rightarrow X$ an $S$-morphism.  

(1) We say that  $h\colon Y\rightarrow X$ is {\it $C$-transversal relatively to} $S$ {\it at a geometric point} $\bar y\rightarrow Y$ if for every non-zero vector $\mu\in C_{h(\bar y)}=C\times_X\bar y$, the image $dh_{\bar y}(\mu)\in T_{\bar y}^\ast(Y/S)\coloneqq T^\ast(Y/S)\times_Y {\bar y}$ is not zero, where $dh_{\bar y}\colon T_{h(\bar y)}^\ast(X/S)\rightarrow T_{\bar y}^\ast(Y/S)$ is the canonical map. We say that $h\colon Y\rightarrow X$ is $C$-{\it transversal relatively to} $S$ if it is $C$-transversal relatively to $S$ at every geometric point of $Y$. If $h:Y\rightarrow X$ is $C$-transversal relatively to $S$, we put  $h^{\circ}C=dh(C\times_XY)$ where $dh:T^\ast(X/S)\times_X Y\rightarrow T^\ast(Y/S)$ is the canonical map induced by $h$.
By the same argument of \cite[Lemma 1.1]{Bei16}, $h^\circ C$ is a conical closed subset of $T^\ast(Y/S)$.

(2) Let $Z$ be a Noetherian scheme smooth over $S$ and $g\colon X\rightarrow Z$ an $S$-morphism. We say that $g\colon X\rightarrow Z$ is {\it $C$-transversal relatively to} $S$ {\it at a geometric point} $\bar x\rightarrow X$ if for every non-zero vector $\nu\in T_{g(\bar x)}^\ast(Z/S)$, we have $dg_{\bar x}(\nu)\notin C_{\bar x}$, where $dg_{\bar x}\colon T^*_{g(\bar x)}(Z/S)\rightarrow T^*_{\bar x}(X/S)$ is the canonical map. We say that $g\colon X\rightarrow Z$ is $C$-{\it transversal relatively to} $S$ if it is $C$-transversal relatively to $S$ at all geometric points of $X$.
If the base $B(C)\coloneq C\cap {T}_X^*(X/S) $ of $C$ is proper over $Z$, we put $g_\circ C:={\rm pr}_1(dg^{-1}(C))$, where ${\rm pr}_1\colon T^\ast (Z/S)\times_Z X\rightarrow T^\ast (Z/S)$ denotes the first projection and $dg: T^*(Z/S)\times_ZX\rightarrow T^*(X/S)$ is the canonical map. It is a closed conical subset of $T^*(Z/S)$.

(3) A {\it test pair of} $X$ {\it relative to} $S$ is a pair of $S$-morphisms $(g,h): Y\leftarrow U\rightarrow X$ such that $U$ and $Y$ are Noetherian schemes smooth over $S$. We say that $(g,h)$ is $C$-{\it transversal relatively to} $S$ if $h:U\rightarrow X$ is $C$-transversal relatively to $S$ and $g:U\rightarrow Y$ is $h^{\circ}C$-transversal relatively to $S$. 

\end{definition}

\begin{definition}[{\cite[\S 1.3]{Bei16} and \cite[\S 4]{HY17}}]
Let $f\colon X\rightarrow S$ be a smooth morphism of Noetherian schemes over $\mathbb Z[1/\ell]$. Let $\mathcal F$ be an object in $D^b_c(X,\Lambda)$.

(1) We say that a test pair $(g,h):Y\leftarrow U\rightarrow X$ relative to $S$ is $\mathcal F$-{\it acyclic} if $g:U\rightarrow Y$ is universally locally acyclic relatively to $h^*\mathcal F$.

(2) For a closed conical subset $C$ of ${T}^*(X/S)$, we say that $\mathcal F$ is {\it micro-supported on} $C$ {\it relatively to} $S$ if every $C$-transversal test pair of $X$ relative to $S$ is $\mathcal F$-acyclic. 

(3) Let  $\mathcal C(\mathcal F, X/S)$  be the set of all closed conical subsets $C'\subseteq T^*(X/S)$ such that $\mathcal F$ is 
micro-supported on $C'$ relatively to $S$.
Note that $\mathcal C(\mathcal F,X/S)$ is non-empty if $f:X\rightarrow S$ is universally locally acyclic relatively to $\mathcal F$.
If $\mathcal C(\mathcal F, X/S)$ has a smallest element, we denote it by $SS(\mathcal F, X/S)$ and call it the {\it singular support} of $\mathcal F$ {\it relative to} $S$. 
\end{definition}
\begin{theorem}[Beilinson]\label{existrss}
Let $f:X\rightarrow S$ be a smooth morphism between Noetherian schemes over $\mathbb{Z}[1/\ell]$ and  $\mathcal F$ an object of $D^b_c(X,\Lambda)$. 
\begin{itemize}
\item[(1)]$($\cite[Theorem 5.2]{HY17}$)$ If we further assume that $f:X\rightarrow S$ is projective and universally locally acyclic relatively to $\mathcal F$,  the singular support $SS(\mathcal F,X/S)$ exists.
\item[(2)]$($\cite[Theorem 5.2 and Theorem 5.3]{HY17}$)$ In general, after replacing $S$ by a Zariski open dense subscheme, the singular support $SS(\mathcal F,X/S)$ exists, and for any $s\in S$, we have
 \begin{equation}\label{ssequalwant}
   SS(\mathcal F|_{X_s}, X_s/s)=SS(\mathcal F,X/S)\times_Ss.
 \end{equation}
\item[(3)] $($\cite[Theorem 1.3]{Bei16}$)$ If $S={\rm Spec}k$ for a field $k$ and if $X$ is purely of dimension $d$, then $SS(\mathcal F, X/S)$ is purely of dimension $d$.
 \end{itemize}
\end{theorem}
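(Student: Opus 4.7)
The plan is to handle the three parts in the order (3), (1), (2), since each relative statement builds on the absolute one. Throughout I would use the test-pair characterization of micro-support, and the main obstacle will be transferring Beilinson's intrinsic absolute construction to a uniform relative version whose fibres recover the absolute ones.

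For (3), I would follow Beilinson's original strategy. The lower bound $\dim SS(\mathcal F, X) \geq d$ at each point of the support follows from a local microlocal argument: if $SS(\mathcal F)$ had dimension $<d$ at $x$, one could produce enough transversal test functions to force $\mathcal F_x = 0$. The upper bound $\dim SS(\mathcal F, X) \leq d$ is the deep part, handled via Beilinson's Radon-type reduction. Fixing a projective embedding, one defines a candidate $C \subset T^\ast X$ by the covectors along which a generic pencil fails to be locally acyclic, shows by the Radon argument that $\dim C \leq d$, and verifies that $\mathcal F$ is micro-supported on $C$ by reducing the test-pair condition to the curve case.

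For (1), with $f\colon X \to S$ projective smooth and universally locally acyclic relative to $\mathcal F$, I would glue fibrewise singular supports. For each geometric point $\bar s \to S$, part (3) gives $SS(\mathcal F|_{X_{\bar s}}, X_{\bar s}/\bar s) \subset T^\ast X_{\bar s} = T^\ast(X/S)|_{X_{\bar s}}$ of pure dimension $n$. Define $C \subset T^\ast(X/S)$ as the Zariski closure of the union of these fibrewise singular supports. Projectivity makes the bases of the fibrewise singular supports proper, so the fibrewise data assemble into a closed conical subset of pure relative dimension $n$. Universal local acyclicity lets one check micro-support relative to $S$ fibrewise: a $C$-transversal relative test pair $(g,h)\colon Y \leftarrow U \rightarrow X$ restricts on each fibre to a fibrewise $SS(\mathcal F|_{X_{\bar s}})$-transversal test pair, and universal local acyclicity then propagates fibrewise $\mathcal F$-acyclicity to relative $\mathcal F$-acyclicity of $(g,h)$. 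Minimality of $C$ follows from the fibrewise minimality in (3). The hard step here is verifying that the union of fibrewise singular supports really closes up to a constructible conical subset of $T^\ast(X/S)$; this is a semicontinuity statement in families that is controlled by properness of supports.

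For (2), in the general (non-projective) case Deligne's generic universal local acyclicity theorem provides a Zariski open dense $U \subset S$ over which $f|_{X_U}$ is universally locally acyclic relative to $\mathcal F|_{X_U}$. On $U$ the gluing construction of (1) still applies after further shrinking to guarantee that the fibrewise characteristic data is generically constant and assembles into a closed conical subset of $T^\ast(X_U/U)$. The compatibility \eqref{ssequalwant} with geometric fibres is then built into the construction.
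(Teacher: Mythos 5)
The paper does not prove this theorem; it is quoted from \cite{HY17} (parts (1) and (2)) and \cite{Bei16} (part (3)), so there is no ``paper proof'' to compare against, and your task here would really be to reconstruct the arguments of those references. Your sketch of part (3) is a fair high-level summary of Beilinson's approach (lower bound from local transversal test functions, upper bound via the Radon transform/Lefschetz pencil reduction to curves), though it is too compressed to be checkable.

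The serious issues are in parts (1) and (2). For (1), you propose to take $C$ to be the Zariski closure of the union of the fibrewise singular supports and then claim that ``a $C$-transversal relative test pair restricts to fibrewise $SS(\mathcal F|_{X_{\bar s}})$-transversal test pairs, and universal local acyclicity then propagates fibrewise $\mathcal F$-acyclicity to relative $\mathcal F$-acyclicity of $(g,h)$.'' This last step is precisely the hard part of the whole theory, not a routine deduction. Universal local acyclicity of $g\colon U\to Y$ with respect to $h^\ast\mathcal F$ is not a condition that can be checked one geometric fibre of $S$ at a time: it involves specialization maps across all geometric points of $Y$ and all bases, and the passage from ``acyclic on each fibre $X_{\bar s}$'' to ``universally locally acyclic relative to $S$'' is exactly the uniformity statement that \cite[Theorems 5.2--5.3]{HY17} prove, using a relative version of the Radon transform argument rather than a naive gluing of fibres. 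Similarly, you flag the constructibility of the union of fibrewise singular supports as ``the hard step'' but do not resolve it; in general there is no reason that a union of closed conical subsets over varying geometric points of $S$ assembles into a closed conical subset of $T^\ast(X/S)$ without a real semicontinuity argument. Moreover, the theorem as stated separates existence (part (1), over all of $S$ under ULA) from fibrewise compatibility \eqref{ssequalwant} (part (2), only after shrinking $S$); your construction of $C$ as the closure of fibrewise singular supports would, if it worked, automatically give fibrewise compatibility over all of $S$ in the projective ULA case, which is not what the statement claims and suggests that the naive gluing is not the mechanism. So while your overall fibrewise philosophy is in the spirit of \cite{HY17}, the two steps you would actually need to carry out --- checking relative micro-support fibrewise, and assembling the fibrewise supports into a constructible set --- are asserted rather than argued, and these are the entire content of the cited theorems.
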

%If  $S={\rm Spec}k$, we put $SS(\mathcal F)=SS(\mathcal F, X/S)$.
\subsection{Characteristic cycle and index formula}
\subsubsection{}Let $k$ be a perfect field of characteristic $p$. 
Let $X$ be a smooth scheme purely of dimension $n$ over $k$, let $C$ be a closed conical subset of $T^*X$ and $f:X\rightarrow \mathbb A^1_k$ a $k$-morphism.
A closed point $v\in X$ is called {\it  at most an isolated $C$-characteristic point of $f:X\rightarrow \mathbb A^1_k$} if there is an open neighborhood $V\subseteq X$ of $v$ such that  $f: V-\{v\}\rightarrow \mathbb A^1_k$ is $C$-transversal. A closed point $v\in X$ is called an {\it isolated $C$-characteristic point} if $v$ is at most an isolated $C$-characteristic point of $f:X\rightarrow \mathbb A^1_k$ but  $f:X\rightarrow \mathbb A^1_k$ is not $C$-transversal at $v$.

%We assume that $C\subseteq T^*X$ is purely of dimension $n$. Let $\{C_\alpha\}_{\alpha\in I}$ be the set of irreducible components of $C$. We denote by $o$ the origin of $\mathbb A^1_k$. A base $\theta\in T^*_o\mathbb A^1_k$ defines a section $\theta:X\rightarrow  T^*\mathbb A^1_k\times_{\mathbb A^1_k}X$, hence a section $df\circ \theta:X\rightarrow T^*X$, where $df:T^*\mathbb A^1_k\times_{\mathbb A^1_k}X\rightarrow T^*X$ is the canonical morphism. Let $A=\sum_{\alpha\in I}m_\alpha[C_\alpha]$ $(m_\alpha \in\mathbb Z)$ be an $n$-cycle supported on $C$ and $v\in X$  at most an isolated $C$-characteristic point of $f:X\rightarrow \mathbb A^1_k$. Then, there is an open neighborhood $V$  of $v$ in $X$ such that the intersection $(A,[(df\circ\theta)(V)])$ is a $0$-cycle supported on a closed point of $T^*_vV$. Its degree is independent of the choice of the base $\theta\in T^*_o\mathbb A^1_k$ and we denote this intersection  number by $(A, df)_{ T^*X,v}$.

\begin{theorem}[{T.~Saito, \cite[Theorem 5.9]{Sai16}}] 
Let $X$ be a smooth scheme purely of dimension $n$ over a perfect field $k$ of characteristic $p$.
Let $\mathcal F$ be an object of $D^b_c(X,\Lambda)$ and $\{C_\alpha\}_{\alpha\in I}$ the set of irreducible components of $SS(\mathcal F, X/k)$.  There exists a unique $n$-cycle $CC(\mathcal F, X/k)=\sum_{\alpha\in I}m_\alpha [C_\alpha]$ $(m_\alpha\in \mathbb Z)$ of $T^*X$ supported on $SS(\mathcal F, X/k)$,
satisfying the following Milnor formula (\ref{eq:milnor}):

For any \'etale morphism $g:V\rightarrow X$, any morphism $f:V\rightarrow\mathbb A^1_k$, any isolated $g^\circ SS(\mathcal F, X/k)$-characteristic point $v\in V$ of $f:V\rightarrow\mathbb A^1_k$ and any geometric point $\bar v$ of $V$ above $v$, we have
\begin{equation}\label{eq:milnor}
  -\dimtot ~{\rm R}\Phi_{\bar v}(g^\ast \mathcal F, f)=(g^*CC(\mathcal F,X/k),df)_{T^*V,v},
\end{equation}
where ${\rm R}\Phi_{\bar v}(g^*\mathcal F,f)$ denotes the stalk at $\bar v$ of the vanishing cycle complex of $g^*\mathcal F$ relative to $f$, $\dimtot~{\rm R}\Phi_{\bar v}(g^*\mathcal F,f)$ is the total dimension of  ${\rm R}\Phi_{\bar v}(g^*\mathcal F,f)$  and $g^*CC(\mathcal F,X/k)$ is the pull-back of $CC(\mathcal F,X/k)$ to $T^*V$. 
\end{theorem}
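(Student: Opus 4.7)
\textbf{Uniqueness.} The plan is to separate the irreducible components via well-chosen test functions. By Beilinson's theorem (Theorem \ref{existrss}(3)), every irreducible component $C_\alpha$ is purely of dimension $n$ inside the $2n$-dimensional variety $T^\ast X$. For each fixed $\alpha$ I would produce, étale-locally around some closed point of the image of $C_\alpha$ in $X$, a pair $(g,f)$ consisting of an étale morphism $g\colon V\to X$ and a $k$-morphism $f\colon V\to\mathbb A^1_k$ such that the section $df\colon V\to T^\ast V$ meets $g^\circ SS(\mathcal F,X/k)$ at an isolated point $v$, with $df(v)\in g^\circ C_\alpha$ but $df(v)\notin g^\circ C_\beta$ for $\beta\neq\alpha$. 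Since the set of linear forms that hit a prescribed $n$-dimensional conical subvariety transversally at one point while avoiding the finitely many others is non-empty by a dimension count in the jet space of functions at $v$, such an $f$ exists. Given two cycles $A=\sum a_\alpha[C_\alpha]$ and $B=\sum b_\alpha[C_\alpha]$ both satisfying \eqref{eq:milnor}, subtraction yields
\[
0 \;=\; \bigl(g^\ast(A-B),\,df\bigr)_{T^\ast V,\,v} \;=\; (a_\alpha-b_\alpha)\,(g^\ast[C_\alpha],df)_{T^\ast V,\,v},
\]
and the latter intersection number is nonzero because $v$ is an isolated point of $df(V)\cap g^\circ C_\alpha$; hence $a_\alpha=b_\alpha$.

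\textbf{Existence.} I would define the multiplicity $m_\alpha$ by inverting the Milnor formula. Concretely, fix a separating pair $(g_\alpha,f_\alpha,v_\alpha)$ as above for each $C_\alpha$ and set
\[
m_\alpha \;\coloneqq\; \frac{-\dimtot\,\mathrm R\Phi_{\bar v_\alpha}(g_\alpha^\ast\mathcal F,f_\alpha)}{(g_\alpha^\ast[C_\alpha],\,df_\alpha)_{T^\ast V_\alpha,\,v_\alpha}}.
\]
The first task is to show the right-hand side is a well-defined integer. Integrality would be argued by reducing to the case where the intersection number is $1$ via a Bertini-type perturbation of $f_\alpha$ (while preserving the isolated characteristic property at $v_\alpha$). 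Well-definedness, i.e.\ independence of the chosen pair $(g_\alpha,f_\alpha,v_\alpha)$, is the main analytic content: one would connect any two such choices by a one-parameter family and use a constructibility/semicontinuity theorem for vanishing cycles together with a compatibility between variation of the total dimension and variation of the intersection number under perturbation.

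\textbf{Globalising the Milnor formula.} Once $m_\alpha$ is constructed, I would verify \eqref{eq:milnor} for an arbitrary $(g,f,v)$ with $v$ an isolated $g^\circ SS(\mathcal F)$-characteristic point. Near $v$, the section $df$ can meet several pulled-back components $g^\circ C_\alpha$ simultaneously at $v$. The strategy is to decompose the situation: perturb $f$ within a pencil so that the single characteristic point $v$ splits into several isolated characteristic points, each of which separates a single component; track both sides of \eqref{eq:milnor} under this degeneration using the conservation of total dimension of nearby cycles (specialization of $\mathrm R\Phi$) on the left, and the conservation of intersection numbers on the right, thereby reducing the general case to the separated case that defined the $m_\alpha$.

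\textbf{Main obstacle.} The genuinely hard step is the well-definedness and the stability/specialization statement for $\dimtot\,\mathrm R\Phi$ under deformation of the test function $f$. This amounts to a geometric continuity statement for the Artin conductor of vanishing cycles over a parameter base, and requires controlling wild ramification of $\mathrm R\Phi(\mathcal F,f_t)$ uniformly in $t$. In characteristic $0$ this would follow from classical Milnor-number techniques, but in characteristic $p$ one needs Saito's refined analysis via Radon transforms, nearby cycles over higher-dimensional bases, and the compatibility with proper/smooth pushforwards; everything else in the proof is a rather formal consequence of this stability.
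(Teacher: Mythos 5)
The paper does not prove this theorem — it is a verbatim citation of T.~Saito's foundational result \cite[Theorem 5.9]{Sai16}, whose proof occupies a substantial part of that paper. Your sketch of \emph{uniqueness} is essentially correct and follows Saito's own line: once separating test pairs exist, the multiplicities are forced by the Milnor formula. (The existence of such pairs in positive characteristic is obtained by a global Bertini argument on pencils of hyperplane sections, not by the local ``jet-space dimension count'' you invoke — the latter is delicate because inseparability can make covectors unavoidable — but that part of your sketch is repairable.)

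The \emph{existence} half, however, is circular as written. Defining $m_\alpha$ by inverting the Milnor formula at one chosen test pair $(g_\alpha,f_\alpha,v_\alpha)$, and then verifying the formula at all other test pairs by deformation, already presupposes that $\dimtot\,\mathrm{R}\Phi$ is independent of the chosen test pair — which is exactly the well-definedness you must prove. You are right that the genuine content is the stability of the total dimension of vanishing cycles under perturbation of $f$ (your ``main obstacle'' paragraph), but as presented your plan reduces the theorem to that stability without offering a way in, so it does not close. In Saito's paper the stability is not a postponed obstacle but the heart of the construction: he works through the Radon transform, trading an isolated characteristic point for an analysis on the universal family of hyperplane sections, and controls the multiplicities using properness of that family, Laumon's conservation of the Euler--Poincar\'e characteristic, and a semicontinuity theorem for the Swan conductor proved via nearby cycles over higher-dimensional bases. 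Without such an independent construction of $m_\alpha$ — or a proof of the stability lemma — the sketch does not constitute a proof of existence.
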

We call $CC(\mathcal F,X/k)$ the {\it characteristic cycle of} $\mathcal F$. It satisfies the following index formula.

\begin{theorem}[{T.~Saito, \cite[Theorem 7.13]{Sai16}}]\label{indexformula}
Let $\bar k$ be an algebraic closure of a perfect field $k$ of characteristic $p$, $X$ a smooth projective scheme over $k$ and $\mathcal F\in D^b_c(X,\Lambda)$. Then, we have
\begin{equation}\label{indexformulaeq}
\chi(X_{\bar k},\mathcal F|_{X_{\bar k}})=\deg(CC(\mathcal F,X/k),T^*_XX)_{T^*X},
\end{equation}
where $\chi(X_{\bar k},\mathcal F|_{X_{\bar k}})$ denotes the Euler-Poincar\'e characteristic of $\mathcal F|_{X_{\bar k}}$.
\end{theorem}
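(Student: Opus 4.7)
The plan is to reduce the global Euler characteristic to a sum of local vanishing cycle contributions via a Lefschetz pencil, and then apply the Milnor formula to convert each local contribution into an intersection number in $T^*X$.

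First, I would fix a projective embedding of $X$ and choose a sufficiently generic Lefschetz pencil of hyperplane sections. Blowing up its base locus produces $\pi \colon \tilde X \to X$ and a morphism $f \colon \tilde X \to \mathbb P^1_k$. A Bertini-type argument applied to the closed conical subset $SS(\mathcal F, X/k)$ (in the spirit of Beilinson's transversality results) ensures that the pencil can be chosen so that $f$ is $\pi^\circ SS(\mathcal F, X/k)$-transversal everywhere except at a finite set of isolated characteristic points $\{v_1,\ldots,v_N\} \subset \tilde X$; moreover the exceptional divisor $E = \pi^{-1}(B)$ over the smooth base locus $B$ is a projective bundle, so its contribution to both $\chi$ and to $(CC, T^*_{\tilde X}\tilde X)$ is controlled explicitly.

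Next, since $f$ is smooth and $\pi^*\mathcal F$ is universally locally acyclic over the complement of the $v_i$ and finitely many critical values of $f$, an additivity argument for Euler characteristics along the fibration $f$ gives
\begin{equation*}
\chi(\tilde X_{\bar k}, \pi^*\mathcal F) = \chi(\mathbb P^1_{\bar k}) \cdot \chi(F_{\bar k}, \pi^*\mathcal F|_F) - \sum_{i=1}^N \dimtot\, \mathrm R\Phi_{\bar v_i}(\pi^*\mathcal F, f),
\end{equation*}
where $F$ is a generic fibre. By the Milnor formula defining $CC$, each term on the right equals the local intersection number $(\pi^*CC(\mathcal F,X/k), df)_{T^*\tilde X, v_i}$. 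Summing these identities and interpreting the result globally, one obtains a total intersection number $(\pi^*CC(\mathcal F, X/k), df)_{T^*\tilde X}$ on the blow-up.

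Finally, I would deform the section $df \colon \tilde X \to T^*\tilde X$ to the zero section inside the cotangent bundle using the $\mathbb G_m$-action together with a rational equivalence (exploiting properness of $\pi^*CC$ over $\tilde X$ and the affine bundle structure $T^*\tilde X \to \tilde X$), and then push forward along $\pi$. Since $\pi$ is proper and birational, the exceptional contributions on the cycle side cancel the terms $\chi(\mathbb P^1_{\bar k})\chi(F_{\bar k},\pi^*\mathcal F|_F)$ and the Euler characteristic difference between $\tilde X$ and $X$, leaving exactly $\deg(CC(\mathcal F,X/k), T^*_XX)_{T^*X}$. The main obstacle is the first step: producing a Lefschetz pencil whose generic member is transversal to a prescribed closed conical subset (rather than to $X$ itself), and bookkeeping the blow-up corrections on both sides so that they truly cancel; once this is in place, the rest is a formal combination of the Milnor formula with standard vanishing cycle additivity.
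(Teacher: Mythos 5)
The paper does not actually prove this statement; it quotes it directly as T.~Saito's index formula \cite[Theorem 7.13]{Sai16}, so there is no in-paper argument to compare against. Your Lefschetz-pencil-plus-Milnor-formula strategy captures the right high-level intuition, but the ``deform $df$ to the zero section'' step hides a genuine obstruction that is not a matter of bookkeeping. For $f\colon\tilde X\to\mathbb P^1$, the differential $df$ is \emph{not} a section of $T^*\tilde X$: dualizing $df\colon T\tilde X\to f^*T\mathbb P^1$ shows $df\in H^0(\tilde X,\,\Omega^1_{\tilde X}\otimes f^*\mathcal O_{\mathbb P^1}(2))$, a section of the twisted bundle $T^*\tilde X\otimes f^*\mathcal O(2)$. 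The Milnor formula is local and uses a local trivialization of $f^*T\mathbb P^1$, so summing the local terms $(\pi^*CC(\mathcal F,X/k),df)_{T^*\tilde X,v_i}$ does not yield the degree $\deg(CC,T^*_{\tilde X}\tilde X)_{T^*\tilde X}$; the two differ by the cap of $c_1(f^*\mathcal O_{\mathbb P^1}(2))$ against the characteristic cycle. That correction is precisely what accounts for the ``product'' term $\chi(\mathbb P^1_{\bar k})\cdot\chi(F_{\bar k},\pi^*\mathcal F|_F)$ in your display, so the cancellation you invoke at the end is not a formal consequence of properness and rational equivalence but is itself the content of the theorem in this setting and must be proved. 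Separately, you need the identity $CC(\pi^*\mathcal F,\tilde X/k)=\pi^*CC(\mathcal F,X/k)$ over the blow-up, which requires $\pi$ to be properly $SS(\mathcal F,X/k)$-transversal and is nontrivial over the exceptional divisor; and the exact additivity formula for $\chi$ along the pencil in the form you wrote it needs justification (it is a Grothendieck--Ogg--Shafarevich-type statement for $Rf_*\pi^*\mathcal F$ on $\mathbb P^1$, and one must check that the Swan and drop terms there match your $\dimtot\,\mathrm R\Phi$ contributions).

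For reference, Saito's actual proof of \cite[Theorem 7.13]{Sai16} does not use a single Lefschetz pencil; it reduces to projective space via embeddings and then uses Beilinson's Radon-transform results (which intertwine singular supports and characteristic cycles with the Legendre transform) together with the compatibility of $CC$ with proper push-forward. Your strategy could in principle be salvaged, but only by confronting the twist head-on rather than by a deformation argument in the untwisted cotangent bundle.
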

We give a generalization in Theorem \ref{thm:git}.  For a  smooth scheme $\pi\colon X\to {\rm Spec}k$, and two objects $\mathcal F_1$ and $\mathcal F_2$ in $D_c^b(X,\Lambda)$, we denote $\mathcal F_1\boxtimes^L_k\mathcal F_2\coloneq {\rm pr}_1^*\mathcal F_1 \otimes^L {\rm pr}_2^*\mathcal F_2 \in D^b_c(X\times X, \Lambda)$, where ${\rm pr}_i: X\times X \to X$ is the $i$th projection, for $i=1,2$. We also denote $D_X(\mathcal F_1)={R\mathcal Hom}(\mathcal F_1, \mathcal K_X)$, where $\mathcal K_X={\rm R}\pi^!\Lambda$.
\begin{lemma}\label{Lem:Kun}
Let $X$ be a smooth variety purely of dimension $n$ over a perfect field $k$ of characteristic $p$. Let $\mathcal F_1$ and $\mathcal F_2$ be two objects in $D_c^b(X,\Lambda)$. Then the diagonal map $\delta\colon \Delta=X\hookrightarrow X\times X$ is $SS(\mathcal F_2\boxtimes^LD_X\mathcal F_1,X\times X/k)$-transversal if and only if $SS(\mathcal F_2\boxtimes^L_k D_X\mathcal F_1,X\times X/k)\subseteq  {T}^\ast_{\Delta}(X\times X).$
 If we are in this case, then the canonical map
\[ R \mathcal Hom(\mathcal F_1, \Lambda)\otimes^L \mathcal F_2 \xrightarrow{\cong} R\mathcal Hom(\mathcal F_1, \mathcal F_2) \]
is an isomorphism.
\end{lemma}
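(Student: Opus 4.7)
The strategy is to reduce everything to statements about the diagonal embedding $\delta\colon X \hookrightarrow X \times X$ applied to the single sheaf $\mathcal M := \mathcal F_2 \boxtimes^L_k D_X\mathcal F_1$ on $X\times X$, and then to invoke the standard microlocal purity criterion for closed immersions.

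First I would identify both sides of the canonical map via $\delta$. From $\mathcal K_X = \Lambda(n)[2n]$ one has $D_X\mathcal F_1 \cong R\mathcal Hom(\mathcal F_1,\Lambda)(n)[2n]$, so
\[
R\mathcal Hom(\mathcal F_1,\Lambda)\otimes^L\mathcal F_2 \;\cong\; (\mathcal F_2\otimes^L D_X\mathcal F_1)(-n)[-2n] \;\cong\; \delta^*\mathcal M\,(-n)[-2n].
\]
For the other side, the Künneth-type identity $\mathcal M \cong R\mathcal Hom_{X\times X}(\mathrm{pr}_2^*\mathcal F_1,\,\mathrm{pr}_1^!\mathcal F_2)$ (using $\mathrm{pr}_1^!\Lambda = \Lambda(n)[2n] = \mathrm{pr}_2^*\mathcal K_X$), combined with the commutation $\delta^! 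R\mathcal Hom(\mathcal A,\mathcal B) \cong R\mathcal Hom(\delta^*\mathcal A,\delta^!\mathcal B)$ and $\mathrm{pr}_i\circ\delta = \mathrm{id}_X$, gives $\delta^!\mathcal M \cong R\mathcal Hom(\mathcal F_1,\mathcal F_2)$. Under these identifications the canonical map of the lemma becomes the standard purity morphism $\delta^*\mathcal M\,(-n)[-2n] \to \delta^!\mathcal M$, arising from $\delta^!\Lambda = \Lambda(-n)[-2n]$ via the pairing $\delta^*\mathcal M \otimes \delta^!\Lambda \to \delta^!\mathcal M$.

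For the equivalence of the two conditions on $SS(\mathcal M)$, I would exploit the Künneth formula for singular supports (T.~Saito), which yields $SS(\mathcal M) = SS(\mathcal F_2)\times SS(D_X\mathcal F_1) = SS(\mathcal F_2)\times SS(\mathcal F_1)$, the last equality using $\mathbb G_m$-conicality of singular supports to absorb the antipode from Verdier duality. At a diagonal point $(x,x)$ the differential $d\delta$ is the sum map $(a,b)\mapsto a+b$, whose kernel is precisely the fiber of $T^*_\Delta(X\times X)$, namely $\{(a,-a)\}$. I would then carefully unwind both conditions against the product structure of $SS(\mathcal M)$, invoking conicality to match the pointwise transversality with the global inclusion. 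This is the step I expect to require the most care.

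Once $\delta$ is known to be $SS(\mathcal M)$-transversal, I would invoke the microlocal purity statement (see e.g.\ \cite{Sai16}, and the analogous arguments of \cite{Bei16}): a closed immersion of codimension $c$ between smooth $k$-schemes that is $SS(\mathcal M)$-transversal satisfies $\delta^*\mathcal M(-c)[-2c] \xrightarrow{\sim} \delta^!\mathcal M$. Applied with $c = n$ and translated back through the identifications of the first paragraph, this produces the desired canonical isomorphism $R\mathcal Hom(\mathcal F_1,\Lambda)\otimes^L\mathcal F_2 \xrightarrow{\sim} R\mathcal Hom(\mathcal F_1,\mathcal F_2)$. The only nontrivial obstacle is the bookkeeping in the iff part; the purity step itself is by now standard in the singular-support formalism.
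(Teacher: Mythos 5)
Your proposal follows essentially the same route as the paper: both sides of the canonical map are identified with $\delta^*\mathcal M\otimes R\delta^!\Lambda$ and $R\delta^!\mathcal M$ for $\mathcal M=\mathcal F_2\boxtimes^L_kD_X\mathcal F_1$ (via purity for $\delta$ and the K\"unneth identity $\mathcal M\cong R\mathcal Hom(\mathrm{pr}_2^*\mathcal F_1,R\mathrm{pr}_1^!\mathcal F_1)$... i.e.\ $R\mathrm{pr}_1^!\mathcal F_2$), and the isomorphism is then Saito's transversality criterion \cite[Proposition 8.13]{Sai16}, while the first assertion is read off from the exact sequence identifying $\ker(d\delta)$ with $T^*_\Delta(X\times X)$. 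The step you flag as delicate (matching the pointwise transversality condition with the stated inclusion) is dispatched just as tersely in the paper, so your write-up is at the same level of rigor and no more.
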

\begin{proof}
The first assertion follows from the short exact sequence of vector bundles on $X$ associated to $\delta\colon \Delta=X\hookrightarrow X\times X$:
\[ 0 \to {T}^\ast_{\Delta}(X\times X) \to {T}^\ast(X\times X)\times_{X\times X}\Delta \xrightarrow{d\delta} T^*X \to 0 .   \]
For the second claim, we have the following canonical isomorphisms
\begin{align}
\nonumber R\mathcal Hom(\mathcal F_1, \Lambda)\otimes^L \mathcal F_2 &\cong R\mathcal Hom(\mathcal F_1, \Lambda(n)[2n])\otimes^L \Lambda(-n)[-2n]\otimes^L \mathcal F_2\overset{(1)}{\cong} D_X\mathcal F_1\otimes^L R\delta^!\Lambda\otimes^L \mathcal F_2\\
\label{isom} &\cong \delta^*(\mathcal F_2\boxtimes^L_kD_X\mathcal F_1)\otimes^L R\delta^!\Lambda \overset{(2)}{\cong} R\delta^!(\mathcal F_2\boxtimes^L_kD_X\mathcal F_1) \\
\nonumber &\overset{(3)}{\cong} R\delta^!(R\mathcal Hom({\rm pr}_2^*\mathcal F_1, R{\rm pr}_1^!\mathcal F_2))
\cong R\mathcal Hom(\delta^*{\rm pr}_2^*\mathcal F_1, R\delta^!R{\rm pr}_1^!\mathcal F_2)\\
\nonumber &\cong R\mathcal Hom( \mathcal F_1, \mathcal F_2),
\end{align}
where 

 (1) follows from the purity for the closed immersion $\delta$ \cite[XVI, Th\'eor\`eme 3.1.1]{ILO14};

 (2) follows from the assumption that $\delta$ is $SS(\mathcal F_2\boxtimes^L_kD_X\mathcal F_1)$-transversal by \cite[Proposition 8.13 and Definition 8.5]{Sai16};

 (3) follows from the K\"unneth formula \cite[Expos\'e III, (3.1.1)]{Gro77}.
\end{proof}
\begin{theorem}\label{thm:git}~
Let $X$ be a smooth projective variety purely of dimension $n$ over  an algebraically closed field $k$ of characteristic $p$.
Let $\mathcal F_1$ and $\mathcal F_2$ be two objects in $D_c^b(X,\Lambda)$ such that the diagonal map $\delta\colon \Delta=X\hookrightarrow X\times X$ is properly $SS(\mathcal F_2\boxtimes^L_kD_X\mathcal F_1,X\times X/k)$-transversal. Then we have
\begin{equation}\label{eq:git}
(-1)^n\cdot\dim_{\Lambda} {\rm Ext}(\mathcal F_1, \mathcal F_2)={\rm deg}\left(CC(\mathcal F_1,X/k), CC(\mathcal F_2,X/k) \right)_{{T}^\ast X}
\end{equation}
where $\dim_{\Lambda} {\rm Ext}(\mathcal F_1, \mathcal F_2)=\sum\limits_{i}(-1)^i\dim_{\Lambda} {\rm Ext}^i_{D_c^b(X,\Lambda)}(\mathcal F_1, \mathcal F_2)$.
\end{theorem}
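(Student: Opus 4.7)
The plan is to reduce the left-hand side of \eqref{eq:git} to an intersection number on $T^\ast(X\times X)$ via Lemma \ref{Lem:Kun}, and then to push this intersection down to $T^\ast X$ along the conormal bundle $T^\ast_\Delta(X\times X)$ of the diagonal. First, since $X$ is proper, $\dim_\Lambda \mathrm{Ext}(\mathcal F_1,\mathcal F_2) = \chi(X, R\mathcal{Hom}(\mathcal F_1,\mathcal F_2))$, and the chain of isomorphisms displayed in \eqref{isom} yields the identification $R\mathcal{Hom}(\mathcal F_1,\mathcal F_2) \cong R\delta^!(\mathcal F_2 \boxtimes^L_k D_X\mathcal F_1)$. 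Applying the index formula (Theorem \ref{indexformula}) on $X$ therefore gives
\[ \dim_\Lambda \mathrm{Ext}(\mathcal F_1,\mathcal F_2) \;=\; \deg\bigl(CC\bigl(R\delta^!(\mathcal F_2 \boxtimes^L_k D_X\mathcal F_1)\bigr),\, T^\ast_X X\bigr)_{T^\ast X}. \]

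Next I would invoke the compatibility of the characteristic cycle with proper transversal pullback, which applies by the hypothesis that $\delta$ is properly $SS(\mathcal F_2\boxtimes^L_k D_X\mathcal F_1,X\times X/k)$-transversal (cf.\ \cite[Theorem 7.6]{Sai16}), to convert this into an intersection number on $T^\ast(X\times X)$:
\[ =\; \deg\bigl(CC(\mathcal F_2 \boxtimes^L_k D_X\mathcal F_1,\,X\times X/k),\, [T^\ast_\Delta(X\times X)]\bigr)_{T^\ast(X\times X)}. \]
The Künneth formula for characteristic cycles, combined with the self-duality $CC(D_X\mathcal F_1,X/k) = CC(\mathcal F_1,X/k)$, then lets me replace $CC(\mathcal F_2 \boxtimes^L_k D_X\mathcal F_1,\,X\times X/k)$ by the exterior product $CC(\mathcal F_2,X/k)\boxtimes CC(\mathcal F_1,X/k)$ on $T^\ast(X\times X) = T^\ast X \times_k T^\ast X$.

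To conclude, I would use the identification $T^\ast_\Delta(X\times X) \cong T^\ast X$ given by $\xi \mapsto (\xi,-\xi)$ together with the fact that $CC(\mathcal F_1,X/k)$ is conical, hence stable under the antipodal involution on the fibers of $T^\ast X$, to translate the intersection on $T^\ast(X\times X)$ into $\deg(CC(\mathcal F_1,X/k),\, CC(\mathcal F_2,X/k))_{T^\ast X}$; the sign $(-1)^n$ arises as the determinant of $-1$ acting on the $n$-dimensional fiber. A sanity check with $\mathcal F_1=\mathcal F_2=\Lambda$ reduces \eqref{eq:git} to the Gauss--Bonnet identity $(-1)^n\chi(X) = \deg([T^\ast_X X],\, [T^\ast_X X])_{T^\ast X}$, which is the self-intersection of the zero section of $T^\ast X$. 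The hardest part will be the careful bookkeeping of signs in this last step --- in particular, pinning down the precise form of the Künneth formula for $\ell$-adic characteristic cycles and the appearance of the sign $(-1)^n$ through the antipodal identification --- rather than the underlying categorical manipulations, which are already provided by the proof of Lemma \ref{Lem:Kun}.
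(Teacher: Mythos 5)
Your proposal follows essentially the same route as the paper's proof: identify $R\mathcal Hom(\mathcal F_1,\mathcal F_2)$ with $R\delta^!(\mathcal F_2\boxtimes^L_k D_X\mathcal F_1)$ via Lemma \ref{Lem:Kun}, apply the index formula, invoke the compatibility of $CC$ with properly transversal pullback \cite[Theorem 7.6]{Sai16} together with the K\"unneth formula for characteristic cycles \cite[Theorem 2.2.2]{Sai17}, and finish with the linear algebra of the conormal bundle of the diagonal. The one point to watch is where the sign lives: in the paper the $(-1)^n$ comes entirely from the sign convention in the pullback formula $CC(\delta^\ast K)=(-1)^n\delta^\ast CC(K)$, and the final passage from $\delta^\ast(CC(\mathcal F_2)\times CC(\mathcal F_1))$ paired with the zero section to $\deg(CC(\mathcal F_1),CC(\mathcal F_2))_{T^\ast X}$ contributes no further sign, so make sure your ``antipodal determinant'' sign is not double-counted against the sign already absorbed in the transversal pullback step.
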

\begin{proof}By the isomorphisms \eqref{isom}, the left hand side of (\ref{eq:git}) equals to 
\begin{align}
\nonumber (-1)^n\cdot\chi(X,R\delta^!(\mathcal F_2\boxtimes^L_kD_X\mathcal F_1))&=(-1)^n\cdot\chi(X,\delta^*(\mathcal F_2\boxtimes^L_kD_X\mathcal F_1)\\
\label{eq:git01} &=(-1)^n\cdot\deg(CC(\delta^\ast(\mathcal F_2\boxtimes^L_k D_X(\mathcal F_1),X/k),T^*_XX)_{{T}^*X}.
\end{align}
Since $\delta\colon X\rightarrow X\times X$ is properly $SS(\mathcal F_2\boxtimes^L_kD_X\mathcal F_1,X\times X/k)$-transversal, we have 
\begin{align}
\label{first.isom} CC(\delta^\ast(\mathcal F_2\boxtimes^L_kD_X\mathcal F_1),X/k)&=(-1)^n \delta^\ast CC(D(\mathcal F_2\boxtimes^L_kD_X\mathcal F_1,X\times X/k) )\\
\label{second.isom}&=(-1)^n \delta^\ast (CC(\mathcal F_2,X/k)\times CC(\mathcal F_1,X/k)).
\end{align}
where the equality \eqref{first.isom} follows from \cite[Theroem 7.6]{Sai16}, and \eqref{second.isom} follows from \cite[Theorem 2.2.2]{Sai17}.
Consider the  following commutative diagram
 \[ \xymatrix{
 {T}^*X\times {T}^*X \ar@{=}[r] & {T}^*(X\times X) & {{T}}^\ast(X\times X)\times_{X\times X}\Delta \ar[l]_-{\rm pr} \ar[r]^-{d\delta}& {T}^*X\\
 &{{T}}^\ast X \ar[r]^-{\cong}\ar[u] \ar[ul]^{\rm diag} & {{T}}^\ast_{\Delta}(X\times X)\ar[r] \ar[u]& X. \ar[u]\ar@{}|\Box[ul]
 }\]
We have $\delta^\ast (CC(\mathcal F_2,X/k)\times CC(\mathcal F_1,X/k))=d\delta_\ast {\rm pr}^!(CC(\mathcal F_2,X/k)\times CC(\mathcal F_1,X/k))$ and 
\begin{equation*}
{\rm deg}( \delta^\ast (CC(\mathcal F_2,X/k)\times CC(\mathcal F_1,X/k)), {{T}}^\ast_XX)_{{T}^\ast X}={\rm deg}\left(CC(\mathcal F_1,X/k), CC(\mathcal F_2,X/k) \right)_{{{T}}^\ast X}.
\end{equation*}
Then \eqref{eq:git} follows from the above formula and \eqref{eq:git01}.
\end{proof}

\begin{remark}
If $\mathcal F_1$ is the constant sheaf $\Lambda$, then Theorem \ref{thm:git} is the index formula \eqref{indexformulaeq}.  
Theorem \ref{thm:git} can be viewed as the $\ell$-adic version of the global index formula in the setting of $\mathcal D_X$-modules (cf. \cite[Theorem 11.4.1]{Gin86}).
\end{remark}

\subsection{Relative twist formula}
\subsubsection{}Let $S$ be a Noetherian scheme over $\mathbb{Z}[1/\ell]$, $f:X\rightarrow S$ a smooth morphism of finite type and $\mathcal F$ an object of $D^b_c(X,\Lambda)$. 
Assume that the relative singular support $SS(\mathcal F, X/S)$ exists.
A cycle $B=\sum_{i\in I} m_i[B_i]$ in ${T}^*(X/S)$ is called the {\it characteristic cycle of} $\mathcal F$ {\it  relative to} $S$ if each $B_i$ is a subset of $SS(\mathcal F, X/S)$, each $B_i\rightarrow S$ is open and equidimensional and if, for any algebraic geometric point $\bar s$ of $S$, we have 
\begin{equation}
B_{\bar s}=\sum_{i\in I} m_i[(B_i)_{\bar s}]=CC(\mathcal F|_{X_{\bar s}}, X_{\bar s}/\bar s).
\end{equation}
We denote by $CC(\mathcal F, X/S)$ the characteristic cycle of $\mathcal F$ on $X$ relative to $S$. Notice that relative characteristic cycles may not exist in general.

\begin{proposition}[{T.~Saito, \cite[Proposition 6.5]{HY17}}]\label{flattransversal}
Let $k$ be a perfect field of characteristic $p$. Let $S$ be a smooth connected scheme of dimension $r$ over $k$, $f:X\rightarrow S$ a smooth morphism of finite type and $\mathcal F$ an object of $D^b_c(X,\Lambda)$. 
Assume that $f: X\rightarrow S$ is $SS(\mathcal F,X/k)$-transversal and that each irreducible component of $SS(\mathcal F, X/k)$ is open and equidimensional over $S$. 
Then the relative singular support $SS(\mathcal F, X/S)$ and the relative characteristic cycle $CC(\mathcal F, X/S)$ exist, and we have 
\begin{align}
\label{eqs:rss}SS(\mathcal F,X/S)&=\theta(SS(\mathcal F, X/k)),\\
\label{eqs:rcc}CC(\mathcal F, X/S)&=(-1)^r\theta_*(CC(\mathcal F, X/k)),
\end{align}
where $\theta:{T}^*X\rightarrow {T}^*(X/S)$ denotes the projection induced by the canonical map $\Omega^1_{X/k}\rightarrow\Omega^1_{X/S}$. 
\end{proposition}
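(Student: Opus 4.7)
The plan is to establish the singular-support identity \eqref{eqs:rss} first, and then deduce the characteristic-cycle identity \eqref{eqs:rcc} from the fiber-by-fiber defining property of $CC(\mathcal F,X/S)$ via T.~Saito's transversal-pullback formula. The key geometric input throughout is the short exact sequence of vector bundles on $X$:
\[
0\to f^\ast T^\ast S\to T^\ast X\xrightarrow{\theta} T^\ast(X/S)\to 0,
\]
combined with the hypothesis that $f$ is $SS(\mathcal F,X/k)$-transversal, which forces $SS(\mathcal F,X/k)\cap f^\ast T^\ast S=T^\ast_XX$. Together with the equidimensionality hypothesis on the irreducible components of $SS(\mathcal F,X/k)$, this implies that the restriction of $\theta$ to $SS(\mathcal F,X/k)$ is finite, so that $\bar C\coloneq\theta(SS(\mathcal F,X/k))$ is a closed conical subset of $T^\ast(X/S)$; this is the candidate for $SS(\mathcal F,X/S)$.

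Next I would verify that $\mathcal F$ is micro-supported on $\bar C$ relatively to $S$ by translating any $\bar C$-transversal test pair $(g,h)\colon Y\leftarrow U\to X$ relative to $S$ into an $SS(\mathcal F,X/k)$-transversal test pair over $k$; Beilinson's absolute micro-support property then delivers the required $\mathcal F$-acyclicity (which is an intrinsic property of $g$, insensitive to relative vs.\ absolute). For the $h$-part, a nonzero $\mu\in SS(\mathcal F,X/k)_{h(\bar u)}$ with $\theta(\mu)=0$ would lie in $f^\ast T^\ast S\cap SS(\mathcal F,X/k)=T^\ast_XX$, a contradiction, while the case $\theta(\mu)\neq 0$ is covered by the relative $\bar C$-transversality of $h$. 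For the $g$-part one notes that $f_U\coloneq f\circ h\colon U\to S$ is itself $h^\circ SS(\mathcal F,X/k)$-transversal (a chain-rule consequence of the $SS(\mathcal F,X/k)$-transversality of $f$ and the injectivity of $dh$ on cotangent bundles), which handles the delicate case of cotangent vectors pulled back from $T^\ast S$. Minimality of $\bar C$ (hence $SS(\mathcal F,X/S)=\bar C$) is then obtained by combining Theorem~\ref{existrss}(2) with the fiber identification $\bar C|_{X_{\bar s}}=i_{\bar s}^\circ SS(\mathcal F,X/k)=SS(\mathcal F|_{X_{\bar s}},X_{\bar s}/\bar s)$, valid for every geometric point $\bar s\in S$ because $i_{\bar s}\colon X_{\bar s}\hookrightarrow X$ is $SS(\mathcal F,X/k)$-transversal.

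For the characteristic cycle I would use the defining fiber-by-fiber property that a cycle $B$ in $T^\ast(X/S)$ supported on $SS(\mathcal F,X/S)$ whose components are open equidimensional over $S$ is the relative characteristic cycle precisely when $B|_{\bar s}=CC(\mathcal F|_{X_{\bar s}},X_{\bar s}/\bar s)$ for every geometric point $\bar s\in S$. Applying T.~Saito's transversal-pullback formula \cite[Theorem 7.6]{Sai16} to the codimension-$r$ closed immersion $i_{\bar s}$ yields
\[
CC(\mathcal F|_{X_{\bar s}},X_{\bar s}/\bar s)=(-1)^r\,i_{\bar s}^{!}\bigl(CC(\mathcal F,X/k)\bigr),
\]
and identifying the refined Gysin pullback $i_{\bar s}^!$ with the push-forward along the surjection $\theta|_{X_{\bar s}}\colon T^\ast X|_{X_{\bar s}}\to T^\ast X_{\bar s}$ (whose kernel is $T^\ast_{\bar s}S\otimes\mathcal O_{X_{\bar s}}$) gives $(-1)^r(\theta_\ast CC(\mathcal F,X/k))|_{\bar s}=CC(\mathcal F|_{X_{\bar s}},X_{\bar s}/\bar s)$. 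The openness and equidimensionality of the components of $\theta_\ast CC(\mathcal F,X/k)$ over $S$ are inherited from the hypothesis on $SS(\mathcal F,X/k)$ together with the finiteness of $\theta|_{SS(\mathcal F,X/k)}$, so this cycle is $CC(\mathcal F,X/S)$, establishing \eqref{eqs:rcc}.

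The main obstacles will be (i) carrying out the transversality translation cleanly in the $g$-direction of the second step, particularly the chain-rule case when $\theta$ of the input cotangent vector vanishes, and (ii) pinning down the refined Gysin operation $i_{\bar s}^!$ with the correct sign $(-1)^r$ in the characteristic-cycle statement; these technical bridges between the absolute ($/k$) and relative ($/S$) settings are exactly what allows the clean passage \eqref{eqs:rss}--\eqref{eqs:rcc} from the absolute invariants of Beilinson and Saito to the relative ones.
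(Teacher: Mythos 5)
The paper does not give a proof of this proposition: it is quoted verbatim with the attribution ``\cite[Proposition 6.5]{HY17}'' (a result of T.~Saito appearing in the Hu--Yang paper), so I can only assess your reconstruction on its own merits, not against a proof in this text.

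Your overall architecture --- properness of $\theta|_{SS(\mathcal F,X/k)}$ from the transversality of $f$, verification that $\mathcal F$ is micro-supported on $\bar C=\theta(SS(\mathcal F,X/k))$ by converting relative test pairs into absolute ones, a fiber-by-fiber argument for minimality, and then the refined-Gysin/push-forward identity for $CC$ --- is a sensible reconstruction, and the $CC$ paragraph (including the sign $(-1)^r$ from \cite[Theorem 7.6]{Sai16}) is essentially right. Two places, however, need repair.

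\emph{The $g$-direction of the micro-support check.} You say the transversality of $f_U=f\circ h$ to $h^\circ SS(\mathcal F,X/k)$ is ``a chain-rule consequence of the $SS(\mathcal F,X/k)$-transversality of $f$ and the injectivity of $dh$ on cotangent bundles.'' But $dh$ is not injective in general ($h$ need only be an $S$-morphism between $S$-smooth schemes, e.g.\ a closed immersion), so that phrase is wrong and the chain rule alone does not close the argument. What actually does the job: if $df_U(\nu')=dh(\mu)$ for some $\mu\in SS(\mathcal F,X/k)_{\bar x}$ and $\nu'\in T^*_sS$, then $\mu-df(\nu')\in\ker dh$; applying $\theta$ and using $\theta\circ df=0$ gives $\theta(\mu)\in\ker dh_{\mathrm{rel}}$, and the \emph{relative} $\bar C$-transversality of $h$ (not merely its absolute $SS$-transversality) forces $\theta(\mu)=0$; then $\mu\in f^*T^*S\cap SS(\mathcal F,X/k)$ vanishes by the $SS$-transversality of $f$; finally $\ker dh$ meets $f^*T^*S$ trivially because $h$ is an $S$-morphism (so $dh$ restricts to the identity on $f^*T^*S$), whence $df(\nu')=0$ and $\nu'=0$. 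So the step is true, but for a reason different from and more subtle than the one you state.

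\emph{Minimality of $\bar C$.} You propose to deduce minimality from the fiberwise identity $\bar C_{\bar s}=SS(\mathcal F|_{X_{\bar s}},X_{\bar s}/\bar s)$. But this does not directly give minimality in $\mathcal C(\mathcal F,X/S)$: the fiber inclusion $X_{\bar s}\hookrightarrow X$ is \emph{not} a test pair relative to $S$ (the fiber $X_{\bar s}$ is smooth over $\bar s$, not over $S$), so the definition of ``micro-supported relatively to $S$'' constrains nothing along fibers on its own. Invoking Theorem~\ref{existrss}(2) only gives the fiberwise equality over a dense open of $S$, whereas the proposition claims the identity over all of $S$. To close this you need either to spread out fiber-level detecting test pairs to $S$-smooth ones over a Zariski neighbourhood, or follow the technical reductions of \cite[\S 6]{HY17}. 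Finally, a small point in the first paragraph: the hypothesis yields that $\theta|_{SS(\mathcal F,X/k)}$ is \emph{proper} (a conical set avoiding the kernel of a linear bundle surjection maps properly), which is all that is needed for closedness; ``finite'' is an overstatement and is not used later.
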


%\section{total characteristic class}
%\subsection{}
%For any smooth scheme $X$ purely of dimension $d$ over a perfect field $k$, 
%let $K(X,\Lambda)$ be the Grothendieck group of the triangulated category $D_c^b(X,\Lambda)$. We have the following linear morphism (cf. \cite[Definition 6.7]{Sai16})
%\[cc_{X,\bullet}\colon K(X,\Lambda)\rightarrow CH_\bullet(X)=\bigoplus_{i=0}^{\dim X}CH_i(X)\]
%which sends the class of $\mathcal F$ to the (total) characteristic class of $\mathcal F$.  By  \cite[Lemma 6.9]{Sai16}, $cc_X(\mathcal F)$ is the dimension 0-part of the total characteristic class $cc_{X,\bullet}(\mathcal F)$, i.e., $cc_X(\mathcal F)=cc_{X,0}(\mathcal F)$. 
%By  \cite[Lemma 6.9]{Sai16},  $cc_{X,d}=(-1)^d\cdot{\rank}\mathcal F$ (generic rank).
%We have
%\begin{lemma} Let $\mathcal F$ be a locally constant and constructible sheaf of $\Lambda$-modules on an open dense subscheme $U$ of $X$. Then we have
%$cc_{X,d-1}(j_!\mathcal F)=DT(j_!\mathcal F)$ where $j\colon U\hookrightarrow X$ is the open immersion.
%\end{lemma}
\begin{definition}\label{def:rcclass}
Let $k$ be a perfect field of characteristic $p$ and $S$ a smooth connected scheme of dimension $r$ over $k$. Let $f:X\rightarrow S$ be a smooth morphism purely of relative dimension $n$ and $\mathcal F$ an object of $D^b_c(X,\Lambda)$. 
Assume that $f: X\rightarrow S$ is $SS(\mathcal F,X/k)$-transversal.
Consider the following cartesian diagram
\begin{align}\label{eq:rcclass}
\xymatrix{
{T}^\ast S\times_SX\ar[r]\ar[d]&{T}^\ast X\ar[d]\\
X\ar[r]^-{0_{X/S}}&{T}^\ast(X/S)
}
\end{align}
where $0_{X/S}\colon X\rightarrow {T}^\ast(X/S)$ is the zero section.
Since $f: X\rightarrow S$ is $SS(\mathcal F,X/k)$-transversal, the refined Gysin pull-back $0_{X/S}^!(CC(\mathcal F,X/k))$ of $CC(\mathcal F,X/k)$ is a $r$-cycle class supported on $X$. We define the \emph{relative characteristic class} of $\mathcal F$ to be 
\begin{align}\label{eq:rcclassdef0}
cc_{X/S}(\mathcal F)=(-1)^r\cdot 0_{X/S}^!(CC(\mathcal F,X/k)) \quad{in}\quad {\rm CH}^n(X).
\end{align} 
\end{definition}

Now we prove a special case of Conjecture \ref{conj:rtf}.
\begin{theorem}[Relative twist formula]\label{thm:rtf}
Let $S$ be a smooth connected  scheme of dimension $r$ over a finite field $k$ of characteristic $p$.
 Let  $f\colon X\rightarrow S$ be a  smooth projective morphism of relative dimension $n$.
Let $\mathcal F\in D_c^b(X,\Lambda)$ and $\mathcal G$ a locally constant and constructible  sheaf of  $\Lambda$-modules on $X$.
Assume that $f$ is properly $SS(\mathcal F, X/k)$-transversal. Then there is an isomorphism 
\begin{align}
\det Rf_\ast(\mathcal F\otimes\mathcal G)\cong(\det Rf_\ast\mathcal F)^{\otimes\rank\mathcal G}\otimes \det\mathcal G(cc_{X/S}(\mathcal F)) \; \   \; {\rm in\ } K_0(S,\Lambda).
\end{align}
Note that we also have $cc_{X/S}(\mathcal F)=(CC(\mathcal F, X/S), {T}^*_XX)_{{T}^\ast(X/S)}\in {\rm CH}^n(X)$.
\end{theorem}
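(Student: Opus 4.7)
The strategy is to reduce the statement, by fibering over $S$, to the absolute twist formula of Theorem \ref{thm:uyz} applied over each closed point of $S$. The hypothesis that $f\colon X\rightarrow S$ is properly $SS(\mathcal F, X/k)$-transversal forces, in particular, $f$ to be universally locally acyclic relative to $\mathcal F$ and to $\mathcal F\otimes\mathcal G$ (using local constancy of $\mathcal G$). Combined with $f$ proper, this makes $Rf_\ast \mathcal F$ and $Rf_\ast(\mathcal F\otimes\mathcal G)$ smooth on $S$, and hence their determinants give smooth sheaves of rank $1$. The sheaf $\det\mathcal G(cc_{X/S}(\mathcal F))$ is by construction a smooth sheaf of rank $1$ on $S$. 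Thus both sides of the asserted isomorphism are characters $\pi_1^{\mathrm{ab}}(S)\to\Lambda^\times$, and since $S$ is smooth connected over a finite field, Chebotarev density reduces everything to checking equality on the geometric Frobenius elements $\mathrm{Frob}_s$ for all closed points $s\in S$.

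Fix such an $s$ with residue field $k(s)$ and a geometric point $\bar s$ above it. Proper base change identifies the stalks $(Rf_\ast\mathcal F)_{\bar s}=R\Gamma(X_{\bar s},\mathcal F|_{X_{\bar s}})$ and similarly for $\mathcal F\otimes\mathcal G$, so the $\mathrm{Frob}_s$-values of $\det Rf_\ast(\mathcal F\otimes\mathcal G)$ and $(\det Rf_\ast\mathcal F)^{\otimes\rank\mathcal G}$ are (up to the standard sign $(-1)^{\chi}$) the epsilon factors $\varepsilon(X_s,(\mathcal F\otimes\mathcal G)|_{X_s})$ and $\varepsilon(X_s,\mathcal F|_{X_s})^{\rank\mathcal G}$. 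Since $X_s$ is smooth projective of dimension $n$ over the finite field $k(s)$, Theorem~\ref{thm:uyz} applies and yields
\[
\varepsilon(X_s,(\mathcal F\otimes\mathcal G)|_{X_s})=\varepsilon(X_s,\mathcal F|_{X_s})^{\rank\mathcal G}\cdot\det\mathcal G|_{X_s}\bigl(\rho_{X_s}(-cc_{X_s/k(s)}(\mathcal F|_{X_s}))\bigr).
\]
It remains to identify the fiberwise twist with the $\mathrm{Frob}_s$-value of $\det\mathcal G(cc_{X/S}(\mathcal F))$.

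For this, axiom (2) of the pairing \eqref{eq:1:100} says that $(cc_{X/S}(\mathcal F),\mathrm{Frob}_s)\in\pi_1^{\mathrm{ab}}(X)$ is the image under $i_{s,\ast}\colon\pi_1^{\mathrm{ab}}(X_s)\to\pi_1^{\mathrm{ab}}(X)$ of the pairing of $\mathrm{Frob}_s$ with the restriction $cc_{X/S}(\mathcal F)|_{X_s}\in\mathrm{CH}^n(X_s)$. So one has to verify the compatibility
\[
cc_{X/S}(\mathcal F)\big|_{X_s}=cc_{X_s/k(s)}(\mathcal F|_{X_s})\qquad\text{in }\mathrm{CH}^n(X_s),
\]
up to the explicit signs. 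This follows from Proposition \ref{flattransversal}: since $f$ is $SS(\mathcal F,X/k)$-transversal, the proposition produces the relative characteristic cycle $CC(\mathcal F, X/S)=(-1)^r\theta_\ast CC(\mathcal F,X/k)$, whose formation commutes with base change to the geometric fiber, and the refined Gysin pull-back along the zero section is itself compatible with base change. The $(-1)^r$ in Definition \ref{def:rcclass} is set up precisely so that after pulling back to the zero-dimensional base $s$ (where $r$ becomes $0$) one recovers $cc_{X_s/k(s)}(\mathcal F|_{X_s})$ without extra sign, matching the absolute case of Theorem~\ref{thm:uyz}.

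Finally, for the asserted alternative description $cc_{X/S}(\mathcal F)=(CC(\mathcal F,X/S),T^\ast_X X)_{T^\ast(X/S)}$, apply the projection formula to the cartesian square in \eqref{eq:rcclass}. Since the square has regular vertical maps (both zero sections of vector bundles) and $\theta$ is a bundle quotient, base change gives $0_{X/S}^!\theta_\ast=\theta'_\ast i^!$ for the obvious maps, whence
\[
(CC(\mathcal F,X/S),T^\ast_X X)_{T^\ast(X/S)}=(-1)^r\,0_{X/S}^!\theta_\ast CC(\mathcal F,X/k)=(-1)^r\,0_{X/S}^!CC(\mathcal F,X/k)=cc_{X/S}(\mathcal F),
\]
using that the transversality implies $SS(\mathcal F,X/k)\cap(T^\ast S\times_S X)$ is contained in the zero section so the refined pull-back agrees with the naive one. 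The hardest and most delicate part of the argument will be bookkeeping of the signs $(-1)^r$ appearing in Definition \ref{def:rcclass} and Proposition \ref{flattransversal}, together with the signs $(-1)^\chi$ converting between the determinant of a perfect complex and its Euler characteristic, so that they combine with the minus sign in $\rho_X(-cc_{X/k}(\mathcal F))$ of Theorem~\ref{thm:uyz} to produce the clean $+cc_{X/S}(\mathcal F)$ of the relative formula.
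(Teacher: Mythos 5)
Your proof takes essentially the same route as the paper: reduce via Chebotarev density to comparing Frobenius eigenvalues at closed points $s\in S$, invoke the absolute twist formula (Theorem \ref{thm:uyz}) on each closed fiber $X_s$, and identify the fiberwise twist with the restriction of $cc_{X/S}(\mathcal F)$ using axiom (2) of the pairing \eqref{eq:1:100}. The one place where you gloss over the decisive technical input is the fiber compatibility $cc_{X/S}(\mathcal F)|_{X_s}=cc_{X_s/k(s)}(\mathcal F|_{X_s})$: you attribute it to Proposition \ref{flattransversal}, but that proposition only records compatibility of $CC(\mathcal F,X/S)$ with pullback to \emph{geometric} fibers and needs its extra hypothesis on the components of $SS(\mathcal F,X/k)$; the paper instead carries out the full chain of refined Gysin manipulations \eqref{eqs:rtf105} whose key input is T.~Saito's compatibility theorem \cite[Theorem 7.6]{Sai16} for the proper $SS$-transversal closed immersion $i\colon X_s\hookrightarrow X$ (this is exactly where the hypothesis of \emph{proper} transversality is used), together with $\theta_\ast$, $0^!_{X/S}$, and ${\rm pr}^!$ commuting suitably. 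So the outline is right, but the core commutation of $CC$ with $i^!$ should be credited to \cite[Theorem 7.6]{Sai16} rather than to Proposition \ref{flattransversal} alone.
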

\begin{proof} We may assume $\mathcal G\neq 0$.
Since $\mathcal G$ is a smooth sheaf, we have $SS(\mathcal F, X/k)=SS(\mathcal F\otimes\mathcal G, X/k)$.
Since $f$ is proper and $SS(\mathcal F, X/k)$-transversal, by \cite[Lemma 4.3.4]{Sai16}, 
 $Rf_\ast \mathcal F$ and $Rf_\ast (\mathcal F\otimes\mathcal G)$ are smooth sheaves on $S$. For any closed point $s\in S$, we have the following commutative diagram 
 \[  \xymatrix{ {T}^*X\times_XX_s \ar[r]^-{\theta_{s}}\ar[d]^{\rm pr} &{T}^*X_s \cong {T}^*(X/S)\times_XX_s\ar[d]^{\rm pr} &X_s \ar[l]_-{0_{X_s}} \ar[d]^{i}\\
 \mathbb{T}^*X \ar[r]^{\theta} &{T}^*(X/S)\ar@{}|\Box[ul] & X\ar[l]_-{0_{X/S}}\ar@{}|\Box[ul]
  }\]
  where $0_{X/S}$ and $0_{X_s}$ are the zero sections. Hence we have 
  \begin{align}
 \nonumber cc_{X_s}(\mathcal F|_{X_s})&=(CC(\mathcal F|_{X_s},X_s/s), X_s)_{{T}^*X_s}=0_{X_s}^!CC(\mathcal F|_{X_s},X_s/s)\overset{(a)}{=}0_{X_s}^!i^!CC(\mathcal F, X/k)\\
 \nonumber &=(-1)^r0_{X_s}^!i^*CC(\mathcal F, X/k)=(-1)^r0_{X_s}^!\theta_{s*}{\rm pr}^!CC(\mathcal F, X/k)\\
 \label{eqs:rtf105}&=(-1)^r0_{X_s}^!{\rm pr}^!\theta_{*}CC(\mathcal F, X/k)
=(-1)^r0_{X_s}^!{\rm pr}^!((-1)^rCC(\mathcal F, X/S))\\
\nonumber&=0_{X_s}^!{\rm pr}^!CC(\mathcal F, X/S)=i^!0_{X/S}^!CC(\mathcal F, X/S)=i^!cc_{X/S}(\mathcal F),
  \end{align}
 where the equality (a) follows from \cite[Theorem 7.6]{Sai16} since $f$ is properly $SS(\mathcal F, X/k)$-transversal.
  
By Chebotarev density (cf. \cite[Th\'eor\`eme 1.1.2]{Lau87}), we may assume that $S$ is the spectrum of a finite field. Then it is sufficient to compare the Frobenius action.
Then one use \eqref{eqs:rtf105} and Theorem \ref{thm:uyz}.
\end{proof}
%We proposal a candidate for the characteristic cycle $cc_{X/S}\in CH^n(X)$ in Conjecture \ref{conj:rtf} in the case that $S$ is smooth over a perfect field as follows: 
%\begin{construction}
%Let $X,S, \mathcal F$ be as in Conjecture \ref{conj:atf}. If moreover $S$ is smooth over a perfect field $k$, then we may define a homomorphism
%\[ cc_{X/S}\colon K_0(X,\Lambda) \to CH^n(X); \mathcal F\mapsto p_!0_{X/S}^!cc_{X/k}(\mathcal F). \]
%\end{construction}

\begin{example}
Let $S$ be a smooth projective connected scheme over a finite field $k$ of characteristic $p>2$.
 Let  $f\colon X\rightarrow S$ be a  smooth projective morphism of relative dimension $n$, $\chi=\rank Rf_\ast\bQl$ the Euler-Poincar\'e number of the fibers
and let $\mathcal F$ be a constructible \'etale sheaf of $\Lambda$-modules on $S$. 
Then by the projection formula, we have $Rf_*f^*\mathcal F \cong \mathcal F\otimes Rf_*\overline{\mathbb Q}_{\ell}.$
Since $f$ is projective and smooth, $Rf_*\overline{\mathbb Q}_{\ell}$ is a smooth sheaf on $S$. Using Theorem \ref{thm:uyz}, we get 
\begin{align}
\varepsilon(S, Rf_\ast f^\ast\mathcal F)= \varepsilon(S, \mathcal F)^{\chi}\cdot {\rm det}Rf_*\overline{\mathbb Q}_{\ell}(-cc_{Y/k}(\mathcal F)).
\end{align}
By \cite[Theorem 2]{Sai94}, ${\rm det}Rf_*\overline{\mathbb Q}_{\ell}=\kappa_{X/S}(-\frac{1}{2}n\chi)$,
where $\kappa_{X/S}$ is a character of order at most 2 and is determined by the following way:

(1) If $n$ is odd, then $\kappa_{X/S}$ is trivial.

(2) If $n=2m$ is even, then $\kappa_{X/S}$ is the quadratic character defined by the square root of $(-1)^{\frac{\chi(\chi-1)}{2}}\cdot \delta_{{\rm dR},X/S}$, where
$\delta_{{\rm dR},X/S}\colon (\det H_{\rm dR}(X/S))^{\otimes 2}\xrightarrow{\simeq}\mathcal O_S$ is the de Rham discriminant defined by the  non-degenerate symmetric bilinear form $H_{\rm dR}(X/S)\otimes^{L}H_{\rm dR}(X/S)\to \mathcal O_S[-2n]$, and  $H_{\rm dR}(X/S)=Rf_*\Omega_{X/S}^{\bullet}$ is the perfect complex of $\mathcal O_S$-modules whose cohomology computes the relative de Rham cohomology of $X/S$.

Similarly, if $\mathcal F$ is a locally constant and constructible \'etale sheaf of $\Lambda$-modules on $S$, then 
\begin{align}
\nonumber{\rm det}Rf_\ast f^\ast\mathcal F\cong {\rm det}(\mathcal F\otimes Rf_*\overline{\mathbb Q}_{\ell})&\cong ({\rm det}\mathcal F)^{\otimes\chi}\otimes ({\rm det}Rf_*\overline{\mathbb Q}_{\ell} )^{\otimes{\rm rank}\mathcal F}\\
 &\cong ({\rm det}\mathcal F)^{\otimes\chi}\otimes (\kappa_{X/S}(-\frac{1}{2}n\chi))^{{\otimes\rm rank}\mathcal F}.
\end{align}
\end{example}

\subsection{Total characteristic class}
\subsubsection{}In the rest of this section, we relate the relative characteristic class $cc_{X/S}(\mathcal F)$ to the total characteristic class of $\mathcal F$.
Let $X$ be a smooth scheme purely of dimension $d$ over a perfect field $k$ of characteristic $p$.
In \cite[Definition 6.7.2]{Sai16}, T.~Saito defines the following morphism
\begin{align}\label{eq:totcc}
cc_{X,\bullet}\colon K_0(X,\Lambda)\to {\rm CH}_\bullet(X)=\bigoplus_{i= 0}^d {\rm CH}_i(X),
\end{align}
which sends $\mathcal F\in D_c^b(X,\Lambda)$ to the total characteristic class $cc_{X,\bullet}(\mathcal F)$ of $\mathcal F$.
For our convenience, for any integer $n$ we put
\begin{align}
cc_{X}^n(\mathcal F)\coloneq cc_{X,d-n}(\mathcal F)\quad{\rm in}\quad {\rm CH}^n(X).
\end{align}
By \cite[Lemma 6.9]{Sai16}, for any $\mathcal F\in D_c^b(X,\Lambda)$, we have 
\begin{align}
cc_{X}^d(\mathcal F)=cc_{X,0}(\mathcal F)&=(CC(\mathcal F, X/k), {T}^\ast_XX)_{{T}^\ast X} \quad{\rm in}\quad {\rm CH}_0(X),\\
cc_{X}^0(\mathcal F)= cc_{X,d}(\mathcal F)&=(-1)^d\cdot {\rm rank}\mathcal F\cdot [X]\,\, \quad\quad\quad{\rm in}\quad {\rm CH}_d(X)=\mathbb Z.
\end{align}
The following proposition gives a computation of $cc_{X}^n\mathcal F$ for any $n$.
\begin{proposition}\label{prop:identificationoftwocc}
Let $S$ be a smooth connected  scheme of dimension $r$ over a perfect  field $k$ of characteristic $p$.
Let $f\colon X\rightarrow S$ be a smooth morphism purely of relative dimension $n$.
Assume that $f$ is $SS(\mathcal F, X/k)$-transversal. Then we have
\begin{align}\label{eq:identificationoftwocc}
cc_{X}^n(\mathcal F)= (-1)^r\cdot  cc_{X/S}(\mathcal F) \quad{\rm in}\quad {\rm CH}^n(X)
\end{align}
where $cc_{X/S}(\mathcal F)$ is defined in Definition \ref{def:rcclass}.
\end{proposition}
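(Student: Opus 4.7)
The strategy is to compare Saito's total characteristic class $cc_{X,r}(\mathcal F)=cc_X^n(\mathcal F)$ with the refined Gysin pullback $0_{X/S}^!(CC(\mathcal F,X/k))$ from Definition \ref{def:rcclass}, by exploiting the short exact sequence of vector bundles on $X$
\[
0 \to f^{\ast}T^{\ast}S \to T^{\ast}X \xrightarrow{\theta} T^{\ast}(X/S) \to 0
\]
together with the $SS(\mathcal F, X/k)$-transversality hypothesis.

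The plan begins by unpacking the definition of $cc_{X,\bullet}$ from \cite[Definition 6.7.2]{Sai16}: the class $cc_{X,\bullet}(\mathcal F)$ is constructed from $CC(\mathcal F,X/k) \in Z_{n+r}(T^{\ast}X)$ through a Chern/Segre-class operation with respect to the cotangent bundle $\pi\colon T^{\ast}X \to X$, and its degree zero component is $cc_{X,0}(\mathcal F) = (CC(\mathcal F,X/k), T^{\ast}_XX)_{T^{\ast}X}$. The $SS(\mathcal F, X/k)$-transversality of $f$ translates into the set-theoretic equality $SS(\mathcal F,X/k) \cap f^{\ast}T^{\ast}S = T^{\ast}_XX$ inside $T^{\ast}X$, so the refined Gysin pullback $0_{X/S}^!(CC(\mathcal F,X/k))$ along the cartesian square \eqref{eq:rcclass} is well-defined, supported on the zero section, and yields a class in $CH_r(X) = CH^n(X)$.

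Next, I would factor the absolute zero section $0_X\colon X \to T^{\ast}X$ as the composition
\[
X \xrightarrow{\; 0_{f^{\ast}T^{\ast}S}\;} f^{\ast}T^{\ast}S \hookrightarrow T^{\ast}X,
\]
in which the first arrow is the zero section of the rank-$r$ subbundle $f^{\ast}T^{\ast}S$ and the second is the inclusion of the subbundle. By the compatibility of refined Gysin pullbacks with this factorization and by the excess intersection formula applied to \eqref{eq:rcclass}, one decomposes $cc_{X,\bullet}(\mathcal F)$ into graded pieces indexed by Chern classes of $T^{\ast}X$; after invoking the transversality, the only surviving contribution in the degree-$r$ component is $0_{X/S}^!(CC(\mathcal F,X/k))$, because the higher Chern classes of $f^{\ast}T^{\ast}S$ act trivially on a cycle whose support is already the zero section of $f^{\ast}T^{\ast}S$. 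Combining with the $(-1)^r$ in \eqref{eq:rcclassdef0} then yields $cc_X^n(\mathcal F) = (-1)^r\cdot cc_{X/S}(\mathcal F)$.

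The main obstacle will be in the middle step: identifying precisely which pieces of Saito's Chern-class construction survive in the degree-$r$ position, and bookkeeping the signs (the $(-1)^r$ of Proposition \ref{flattransversal}, the $(-1)^r$ in Definition \ref{def:rcclass}, and any signs intrinsic to Saito's total characteristic class must all be balanced). A useful technical simplification is to localize on $S$ so that $T^{\ast}S$ becomes trivial, whence $T^{\ast}X \cong T^{\ast}(X/S) \oplus \mathcal O_X^{\oplus r}$; the total characteristic class then decomposes multiplicatively under this splitting, reducing the claimed identification to the degree-zero case applied to each trivial factor, and matching the fiberwise computation \eqref{eqs:rtf105} already used in the proof of Theorem \ref{thm:rtf}.
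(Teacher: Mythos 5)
Your overall strategy is the right one and is in fact the paper's: use the transversality to see that $CC(\mathcal F,X/k)$ meets the subbundle $T^\ast S\times_S X\subset T^\ast X$ only in the zero section, and compare the resulting refined Gysin class $0_{X/S}^!(CC(\mathcal F,X/k))$ with the degree-$r$ piece of Saito's total characteristic class. But the step that actually carries out this comparison does not work. Factoring the absolute zero section as $X\to f^\ast T^\ast S\hookrightarrow T^\ast X$ and applying the excess/self-intersection formula computes $0_X^!(CC(\mathcal F,X/k))=cc_{X,0}(\mathcal F)$, i.e.\ the \emph{degree-zero} component, not $cc_{X,r}(\mathcal F)$; what this route yields is the different identity $cc_{X,0}(\mathcal F)=c_r(f^\ast T^\ast S)\cap 0_{X/S}^!(CC(\mathcal F,X/k))$. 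The component $cc_{X,r}(\mathcal F)$ is defined only through the decomposition of an extension $\overline{CC(\mathcal F,X/k)}$ in $\mathbb P(T^\ast X\oplus\mathbb A^1_X)$ under the projective bundle formula, so to isolate it you must show that this decomposition is compatible with Gysin pullback to $\mathbb P((T^\ast S\times_S X)\oplus\mathbb A^1_X)$, and that for a class whose restriction to the open part $T^\ast S\times_S X$ is supported in the zero section the $q=r$ component is exactly that restriction. This is precisely the content of \cite[Lemma 6.1.2 and Lemma 6.2.1]{Sai16}, which the paper uses to build the commutative diagram \eqref{eq:identificationoftwocc101}; it is the missing ingredient in your plan and is not replaced by the zero-section factorization.

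Two further assertions in your plan would fail as written. First, ``the higher Chern classes of $f^\ast T^\ast S$ act trivially on a cycle whose support is already the zero section'' is false: for $\alpha\in {\rm CH}_\ast(X)$ one has $c_j(f^\ast T^\ast S)\cap\alpha=f^\ast c_j(T^\ast S)\cap\alpha$, which is in general nonzero (already for $S$ a surface with $c_1(\Omega^1_{S/k})\neq0$ and $\alpha=[X]$). Second, you cannot ``localize on $S$'' to prove an identity in ${\rm CH}^n(X)$: restriction to $X\times_S U$ for a dense open $U\subseteq S$ is surjective but not injective on Chow groups, so the identity cannot be checked after shrinking $S$; moreover trivializing $T^\ast S$ does not split the sequence $0\to f^\ast T^\ast S\to T^\ast X\to T^\ast(X/S)\to0$. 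Finally, the fiberwise computation \eqref{eqs:rtf105} lives in ${\rm CH}_0(X_s)$ and only detects degree-zero information, so it cannot substitute for the degree-$r$ statement either.
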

\begin{proof}
We use the notation of \cite[Lemma 6.2]{Sai16}.
We put $F=({T}^\ast S\times_SX)\oplus \mathbb A_X^1$ and $E={T}^\ast X\oplus \mathbb A_X^1$. 
We have a canonical injection $i\colon F\to E$ of vector bundles on $X$ induced by
$df\colon T^\ast S\times_SX\rightarrow T^\ast X$.
Let $\bar i\colon \mathbb P(F)\rightarrow \mathbb P(E)$ be the canonical 
map induced by $i\colon F\to E$.
By \cite[Lemma 6.1.2 and Lemma 6.2.1]{Sai16}, we have a commutative diagram:
\begin{align}\label{eq:identificationoftwocc101}
\begin{gathered}
\xymatrix{
{\rm CH}_r(\mathbb P(F))&&{\rm CH}_{n+r}(\mathbb P(E))\ar[ll]_-{\bar i^\ast}\\
\bigoplus\limits_{q=0}^r{\rm CH}_q(X)\ar[u]^-{\simeq}\ar[d]_-{\rm can}&&\bigoplus\limits_{q=0}^{n+r}{\rm CH}_q(X)\ar[ll]_-{\rm can}\ar[u]^-{\simeq}\ar[d]_-{\rm can}\\
{\rm CH}_r(X)\ar@{=}[r]&{\rm CH}^n(X)\ar@{=}[r]&{\rm CH}_{r}(X).
}
\end{gathered}
\end{align}
Since $f$ is smooth and $SS(\mathcal F, X/k)$-transversal, the intersection
$SS(\mathcal F, X/k)\cap ({T}^\ast S\times_SX)$ is contained in the zero section of ${T}^\ast S\times_SX$. Thus the Gysin pull-back $i^\ast ({CC(\mathcal F, X/k)})$ is supported on the zero section of ${T}^\ast S\times_SX$. Let $\overline{CC(\mathcal F, X/k)}$ be any extension of $CC(\mathcal F, X/k)$ to $\mathbb P(E)$ (cf. \cite[Definition 6.7.2]{Sai16}).
Then $\bar i^\ast (\overline{CC(\mathcal F, X/k)})$ is an extension of $i^\ast ({CC(\mathcal F, X/k)})$ to $\mathbb P(F)$. 
By \cite[Definition 6.7.2]{Sai16}, the image of $\overline{CC(\mathcal F, X/k)}$ in ${\rm CH}^n(X)$ by the right vertical map of \eqref{eq:identificationoftwocc101} equals to $cc_{X}^n(\mathcal F)=cc_{X,r}(\mathcal F)$.
The image of $\bar i^\ast (\overline{CC(\mathcal F, X/k)})$ in ${\rm CH}^n(X)$ by the left vertical map of \eqref{eq:identificationoftwocc101} equals to $(-1)^r\cdot cc_{X/S}(\mathcal F)$ (cf. \eqref{eq:rcclassdef0}).
Now the equality \eqref{eq:identificationoftwocc} follows from the commutativity of \eqref{eq:identificationoftwocc101}.
\end{proof}

\subsubsection{}Following Grothendieck \cite{Gro77}, it's natural to ask the following question:
is the diagram
\begin{align}\label{eq:fccf}
\begin{gathered}
\xymatrix{
K_0(X,\Lambda)\ar[d]_{f_\ast}\ar[r]^{cc_{X,\bullet}}&CH_\bullet(X)\ar[d]^{f_*}\\
K_0(Y,\Lambda)\ar[r]^{cc_{Y,\bullet}}&CH_\bullet(Y)
}
\end{gathered}
\end{align}
commutative for any proper map $f\colon X\rightarrow Y$ between smooth schemes over a perfect field $k$? 
If $k=\mathbb C$, the diagram (\ref{eq:fccf}) is commutative by \cite[Theorem A.6]{Gin86}.
By the philosophy of Grothendieck, the answer is no in general if ${\rm char}(k)>0$ (cf. \cite[Example 6.10]{Sai16}).  
However, in \cite[Corollary 1.9]{UYZ}, we prove that the degree zero part of the diagram (\ref{eq:fccf}) is commutative, i.e., 
if $f\colon X\rightarrow Y$ is a proper map between smooth projective schemes over a finite field $k$ of characteristic $p$, then we have  the following commutative diagram
\begin{align}\label{eq:fccf2}
\begin{gathered}
\xymatrix{
K_0(X,\Lambda)\ar[d]_{f_\ast}\ar[r]^{cc_{X,0}}&CH_0(X)\ar[d]^{f_*}\\
K_0(Y,\Lambda)\ar[r]^{cc_{Y,0}}&CH_0(Y).
}
\end{gathered}
\end{align}
Now we propose the following:
\begin{conjecture}\label{con:pushccr}
Let $S$ be a smooth connected  scheme over a perfect  field $k$ of characteristic $p$.
Let $f\colon X\rightarrow S$ be a smooth morphism purely of relative dimension $n$ and  $g\colon Y\to S$ a smooth morphism purely of relative dimension $m$.
Let $D_{c}^b(X/S,\Lambda)$ be the thick subcategory of $D_c^b(X,\Lambda)$ consists of $\mathcal F\in D_c^b(X,\Lambda)$ such that $f\colon X\rightarrow S$ is $SS(\mathcal F, X/k)$-transversal. 
Let $K_0(X/S,\Lambda)$ be the Grothendieck group of $D_{c}^b(X/S,\Lambda)$.
Then for any proper morphism $h\colon X\to Y$ over $S$,
\begin{align}\label{eq:con:pushccr1}
\begin{gathered}
\xymatrix{
X\ar[rr]^-h\ar[rd]_-f&&Y\ar[ld]^-g\\
&S
}
\end{gathered}
\end{align}
 the following diagram commutes
\begin{align}\label{eq:con:pushccr2}
\begin{gathered}
\xymatrix@C=3pc{
K_0(X/S,\Lambda)\ar[d]_{h_\ast}\ar[r]^-{~cc_{X}^n~}&CH^n(X)\ar[d]^{h_*}\\
K_0(Y/S,\Lambda)\ar[r]^-{~cc_{Y}^m~}&CH^m(Y).
}
\end{gathered}
\end{align}
That is to say, for any $\mathcal F\in D_c^b(X,\Lambda)$, if $f$ is $SS(\mathcal F, X/k)$-transversal, then we have
\begin{align}
h_\ast(cc_X^n(\mathcal F))=cc_Y^m(Rh_\ast\mathcal F)\quad{\rm in}\quad CH^m(Y).
\end{align}
\end{conjecture}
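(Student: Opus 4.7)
The plan is to reduce Conjecture~\ref{con:pushccr} to Conjecture~\ref{conj:rtf} (the relative twist formula) via the projection formula, with Proposition~\ref{prop:identificationoftwocc} serving as the bridge between total and relative characteristic classes. I first invoke Proposition~\ref{prop:identificationoftwocc}: the transversality hypothesis on $f$ gives $cc_X^n(\mathcal F) = (-1)^r cc_{X/S}(\mathcal F)$ in ${\rm CH}^n(X)$, and to make sense of $cc_Y^m(Rh_\ast\mathcal F) = (-1)^r cc_{Y/S}(Rh_\ast\mathcal F)$ one must first verify that $g\colon Y \to S$ is $SS(Rh_\ast\mathcal F, Y/k)$-transversal. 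Granting this preservation of transversality under proper push-forward, the conjecture becomes
\[ h_\ast\bigl(cc_{X/S}(\mathcal F)\bigr) = cc_{Y/S}(Rh_\ast\mathcal F) \quad \text{in } {\rm CH}^m(Y). \]

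Next, I exploit the relative twist formula. For any smooth $\Lambda$-sheaf $\mathcal G$ on $Y$, the projection formula yields $R(gh)_\ast(\mathcal F \otimes h^\ast\mathcal G) \cong Rg_\ast(Rh_\ast\mathcal F \otimes \mathcal G)$. Applying Theorem~\ref{thm:rtf}, or more generally Conjecture~\ref{conj:rtf}, to the pair $(f, h^\ast\mathcal G)$ on $X$ and to $(g, \mathcal G)$ on $Y$, and cancelling the common factor $(\det Rf_\ast \mathcal F)^{\otimes\rank\mathcal G}$, one obtains
\[ \det(h^\ast\mathcal G)\bigl(cc_{X/S}(\mathcal F)\bigr) \cong \det\mathcal G\bigl(cc_{Y/S}(Rh_\ast\mathcal F)\bigr) \]
as smooth rank-one sheaves on $S$. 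Since $\det(h^\ast\mathcal G) = h^\ast\det\mathcal G$, the functoriality of the pairing \eqref{eq:1:100} under $h$, namely $(h_\ast z, \sigma) = h_\ast((z, \sigma))$ in $\pi_1^{\rm ab}(Y)$, rewrites the left-hand side as $\det\mathcal G(h_\ast cc_{X/S}(\mathcal F))$. Hence $h_\ast cc_{X/S}(\mathcal F)$ and $cc_{Y/S}(Rh_\ast\mathcal F)$ induce isomorphic characters of $\pi_1^{\rm ab}(S)$ for every smooth $\mathcal G$ on $Y$.

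To conclude the equality in ${\rm CH}^m(Y)$ one must invoke a uniqueness assertion: a codimension-$m$ cycle class on $Y$ should be determined by the family of characters above. In the special case $S = {\rm Spec}\,k$ with $k$ finite and $Y$ smooth projective, this follows from Chebotarev density together with the injectivity and dense image of $\rho_Y\colon {\rm CH}_0(Y) \to \pi_1^{\rm ab}(Y)$, which recovers \cite[Corollary~1.9]{UYZ}. In general the required uniqueness is the crux of Conjecture~\ref{conj:rtf} itself and is the main obstacle: it amounts to an injectivity property of the relative reciprocity pairing, which is open outside the projective-over-finite-field setting. A secondary but nontrivial difficulty is the preservation of $SS$-transversality under proper push-forward, needed even to define $cc_Y^m(Rh_\ast\mathcal F)$ via Proposition~\ref{prop:identificationoftwocc}; only restricted versions of the corresponding proper push-forward formula for characteristic cycles are known (cf.~\cite{Sai17}).
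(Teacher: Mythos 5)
This statement is a conjecture in the paper, not a theorem, and the paper offers no proof of it; so there is no ``paper's own proof'' against which your argument can be matched. What the paper actually does as evidence is formulate and prove a cohomological analogue (Corollary~\ref{cor:pushccc}), and that proof goes by an entirely different route: the relative Verdier pairing, the K\"unneth isomorphism of Corollary~\ref{cor:SSKunn}, and a relative version of SGA~5, Th\'eor\`eme~4.4 (Theorem~\ref{thm:mainPSILL}); it makes no use of the twist formula or of any reciprocity pairing.

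Your proposal is an honest conditional reduction, but two points deserve correction. First, the reduction to Conjecture~\ref{conj:rtf} is not available in the generality of Conjecture~\ref{con:pushccr}: the relative twist formula, and indeed the pairing \eqref{eq:1:100} underlying the construction of $\det\mathcal G(z)$, are only defined when $X\to S$ (and likewise $Y\to S$) is smooth \emph{and proper}, whereas Conjecture~\ref{con:pushccr} assumes only that $h\colon X\to Y$ is proper, not that $f$ or $g$ are. Without properness of $f$ and $g$ over $S$, neither Conjecture~\ref{conj:rtf} nor Theorem~\ref{thm:rtf} applies, so the ``cancellation'' step does not get off the ground. Second, the preservation of $SS$-transversality under the proper push-forward $h$, which you flag as a nontrivial obstacle, is not open: it is precisely the content of the remark immediately after Conjecture~\ref{con:pushccr}, which cites \cite[Lemma~3.8 and Lemma~4.2.6]{Sai16}, and it is what makes $h_\ast\colon K_0(X/S,\Lambda)\to K_0(Y/S,\Lambda)$ well defined. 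Your identification of the uniqueness in Conjecture~\ref{conj:rtf} (equivalently, an injectivity of the relative reciprocity pairing) as the crux is accurate, but since that conjecture is itself open, your argument establishes a conditional implication rather than a proof, and even then only under additional properness hypotheses that the paper's conjecture does not impose.
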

\begin{remark}
If $f$ is $SS(\mathcal F, X/k)$-transversal, 
by \cite[Lemma 3.8 and Lemma 4.2.6]{Sai16}, the morphism $g\colon Y\to S$ is $SS(Rh_\ast\mathcal F, Y/k)$-transversal.
Thus we have a well-defined map $h_\ast\colon K_0(X/S,\Lambda)\to K_0(Y/S,\Lambda)$.
\end{remark}
In next section, we formulate and prove a cohomological version of Conjecture \ref{con:pushccr} (cf. Corollary \ref{cor:pushccc}).

\section{Relative cohomological characteristic class}\label{sec:rccc}
In this section, we assume that $S$ is a smooth connected scheme over a perfect field  $k$ of characteristic $p$ and $\Lambda$ is a finite field of characteristic $\ell$.  To simplify our notations, we omit to write $R$ or $L$ to denote the derived functors unless otherwise stated explicitly or for $R\mathcal Hom$.

We briefly recall the content of this section.
Let $X\to S$ be a smooth morphism   purely of relative dimension $n$ and $\mathcal F\in D_c^b(X,\Lambda)$.
If $X\to S$ is $SS(\mathcal F, X/k)$-transversal,
we construct a relative cohomological characteristic class $ccc_{X/S}(\mathcal F)\in H^{2n}(X,\Lambda(n))$ following the method of \cite{AS07,Gro77}. 
We  conjecture that the image of the 
cycle class $cc_{X/S}(\mathcal F)$ by the cycle class map 
${\rm cl}: {\rm CH}^n(X)\rightarrow H^{2n}(X,\Lambda(n))$ is $ccc_{X/S}(\mathcal F)$ (cf. Conjecture \ref{conj:rtf}).
In Corollary \ref{cor:pushccc}, we prove that the formation of  $ccc_{X/S}\mathcal F$ is compatible with proper push-forward.
\subsection{Relative cohomological correspondence}
\subsubsection{}\label{Sec4:notation} Let $\pi_1\colon X_1\to S$ and $ \pi_2\colon X_2\to S$ be smooth morphisms purely of relative dimension $n_1$ and $n_2$ respectively. We put $X\coloneq X_1\times_SX_2 $ and  consider the following cartesian diagram 
\begin{align}
\begin{gathered}
\xymatrix{ X\ar[r]^-{\rm pr_2} \ar[d]_-{\rm pr_1}& X_2 \ar[d]^-{\pi_2}\\ X_1 \ar[r]_-{\pi_1} &S. \ar@{}|\Box[ul]     
}
\end{gathered}
\end{align}
Let $\mathcal E_i$ and $\mathcal F_i$ be  objects of $D^b_c(X_i,\Lambda)$ for $i=1,2$.
We put 
\begin{align}
&\mathcal F\coloneq\mathcal F_1\boxtimes^L_S\mathcal F_2\coloneqq {\rm pr}_1^\ast \mathcal F_1 \otimes^L {\rm pr}_2^\ast \mathcal F_2,\\
&\mathcal E\coloneq\mathcal E_1\boxtimes^L_S\mathcal E_2\coloneqq {\rm pr}_1^\ast \mathcal E_1 \otimes^L {\rm pr}_2^\ast \mathcal E_2,
\end{align}
which are objects of $D_c^b(X,\Lambda)$.
Similarly, we can define $\mathcal F_1\boxtimes^L_k\mathcal F_2$, which is an object of $D_c^b(X_1\times_k X_2,\Lambda)$.
We first compare $SS(\mathcal F_1\boxtimes_S^L\mathcal F_2, X_1\times_S X_1/k)$ and $SS(\mathcal F_1\boxtimes_k^L\mathcal F_2, X_1\times_k X_1/k)$. 
\begin{lemma}\label{lem:boxtimesTrans}
Assume that $\pi_1\colon X_1\to S$ is $SS(\mathcal F_1, X_1/k)$-transversal. Then we have
\begin{align}\label{eq:lem:boxtimesTrans00}
SS({\rm pr}_1^\ast \mathcal F_1, X/k)\cap SS({\rm pr}_2^\ast \mathcal F_2, X/k)\subseteq T^\ast_XX.
\end{align}
Moreover, the closed immersion  $i\colon X_1\times_SX_2 \hookrightarrow X_1\times_kX_2$ is $SS(\mathcal F_1\boxtimes^L_k\mathcal F_2, X_1\times_kX_2/k)$-transversal and
\begin{equation}\label{eq:lem:boxtimesTrans}
SS(\mathcal F_1\boxtimes_S^L\mathcal F_2, X_1\times_SX_2/k)\subseteq i^{\circ}(SS(\mathcal F_1\boxtimes^L_k\mathcal F_2, X_1\times_k X_2/k)).
\end{equation}

\end{lemma}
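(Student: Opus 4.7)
First, I make explicit the conormal geometry of the closed immersion $i\colon X = X_1\times_S X_2 \hookrightarrow X_1\times_k X_2$. Since $i$ is the base change of the diagonal $\Delta_S\colon S\hookrightarrow S\times_k S$ along $\pi_1\times_k \pi_2$, its conormal sheaf is $N^\ast_i \cong \pi^\ast\Omega^1_{S/k}$, where $\pi\colon X\to S$ is the structure map. Under the natural identification $T^\ast(X_1\times_k X_2)|_X \cong {\rm pr}_1^\ast T^\ast X_1 \oplus {\rm pr}_2^\ast T^\ast X_2$, the inclusion $N^\ast_i \hookrightarrow T^\ast(X_1\times_k X_2)|_X$ is given fiberwise by $\nu\mapsto (\pi_1^\#\nu,\, -\pi_2^\#\nu)$, and $di$ exhibits $T^\ast X$ as its cokernel.

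The two external inputs I invoke are the K\"unneth-type containment
\[
SS(\mathcal F_1\boxtimes^L_k\mathcal F_2,\, X_1\times_k X_2/k)\subseteq SS(\mathcal F_1,X_1/k)\times SS(\mathcal F_2,X_2/k),
\]
which follows from \cite[Theorem 2.2.2]{Sai17} applied to characteristic cycles (and thus to their supports), and the smooth pullback formula $SS({\rm pr}_i^\ast\mathcal F_i,\, X/k) = {\rm pr}_i^\circ SS(\mathcal F_i, X_i/k)$, valid because ${\rm pr}_i$ is smooth (cf.\ \cite{Bei16}).

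For the first containment \eqref{eq:lem:boxtimesTrans00}, take $\mu \in SS({\rm pr}_1^\ast\mathcal F_1,\, X/k)\cap SS({\rm pr}_2^\ast\mathcal F_2,\, X/k)$ at a point $x=(x_1,x_2)\in X$. By the smooth pullback formula one can write $\mu=d{\rm pr}_1(\mu_1)=d{\rm pr}_2(\mu_2)$ with $\mu_i\in SS(\mathcal F_i, X_i/k)$. The equality in $T^\ast_x X$ combined with the cokernel description above identifies $(\mu_1, -\mu_2)$ with an element of $N^\ast_i$, so $\mu_1=\pi_1^\#\nu$ for some $\nu\in T^\ast_s S$. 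Since $\pi_1$ is $SS(\mathcal F_1, X_1/k)$-transversal and $\pi_1^\#\nu=\mu_1\in SS(\mathcal F_1, X_1/k)$, we conclude $\nu=0$, whence $\mu_1=\mu_2=\mu=0$. The transversality of $i$ with respect to $SS(\mathcal F_1\boxtimes^L_k\mathcal F_2,\, X_1\times_k X_2/k)$ is proved by the same reasoning: any nonzero element of this singular support lying in $\ker(di) = N^\ast_i$ would, by the K\"unneth containment, take the form $(\pi_1^\#\nu, -\pi_2^\#\nu)$ with $\pi_1^\#\nu\in SS(\mathcal F_1, X_1/k)$, and transversality of $\pi_1$ again forces $\nu=0$, a contradiction. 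Finally, $\mathcal F_1\boxtimes^L_S\mathcal F_2 \cong i^\ast(\mathcal F_1\boxtimes^L_k\mathcal F_2)$ by base change, so the inclusion \eqref{eq:lem:boxtimesTrans} follows from the general pullback estimate $SS(i^\ast\mathcal G)\subseteq i^\circ SS(\mathcal G)$ for $SS(\mathcal G)$-transversal morphisms, recorded in \cite{Bei16}.

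The only nontrivial geometric ingredient is thus the explicit identification of $N^\ast_i$ together with the embedding formula $\nu\mapsto (\pi_1^\#\nu,-\pi_2^\#\nu)$. Once this is in place, all three assertions reduce to the single observation that $SS(\mathcal F_1, X_1/k)$-transversality of $\pi_1$ rules out nonzero elements of $SS(\mathcal F_1, X_1/k)$ that lie in the image of $\pi_1^\#$. This also accounts for the asymmetry between $\pi_1$ and $\pi_2$ in the hypotheses: every element of $N^\ast_i$ has its first component in the image of $\pi_1^\#$, so controlling $\mathcal F_1$ along $\pi_1$ alone is enough.
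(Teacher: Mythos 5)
Your proof is correct and takes essentially the same route as the paper: the paper packages the identical conormal geometry into two commutative diagrams of vector bundles and then applies the same three external inputs (smooth pullback of singular supports, Saito's external product formula for $SS$, and the pullback estimate $SS(i^*\mathcal G)\subseteq i^\circ SS(\mathcal G)$ for $SS(\mathcal G)$-transversal closed immersions), while you write out the anti-diagonal embedding $\nu\mapsto(\pi_1^\#\nu,-\pi_2^\#\nu)$ of the conormal bundle explicitly and derive all three assertions from the single observation that $\pi_1$-transversality kills the first component. One small citation slip: the formula $SS(\mathcal F_1\boxtimes^L_k\mathcal F_2)=SS(\mathcal F_1,X_1/k)\times SS(\mathcal F_2,X_2/k)$ is Theorem 2.2.3 of the cited Saito paper, not 2.2.2 (which concerns characteristic cycles), and the paper uses it directly as an equality of singular supports rather than deducing a containment from characteristic cycles.
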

\begin{proof}
We first prove \eqref{eq:lem:boxtimesTrans00}.
Since $X_i\to S$ is smooth, we obtain  an exact sequence of vector bundles on $X_i$ for $i=1,2$
\begin{align} \label{eq:lem:boxtimesTrans101}
0 \to {T}^*S\times_SX_i \xrightarrow{d\pi_i} {T}^*X_i\to {T}^*(X_i/S)\to 0.
\end{align}
Since $\pi_1\colon X_1\to S$ is $SS(\mathcal F_1, X_1/k)$-transversal, we have 
\begin{align}\label{eq:lem:boxtimesTrans102}
SS(\mathcal F_1,X_1/k)\cap (T^*S\times_SX_1)\subseteq T^*_SS\times_SX_1.
\end{align}
Consider the following diagram with exact rows and exact columns:
\begin{align}\label{eq:lem:boxtimesTrans103}
\begin{gathered}
\xymatrix@R=1pc@C=1.5pc{ &0&0&&\\
 & {T}^*(X_2/S)\times_{X_2}X \ar[r]^-{\cong} \ar[u]&{T}^*(X/X_1)  \ar[u] &&\\
0\ar[r] & {T}^*X_2\times_{X_2}X \ar[u]\ar[r] & {T}^*X\ar[u] \ar[r] & {T}^*(X/X_2) \ar[r] &0\\
 0\ar[r] & {T}^*S\times_{S}X \ar[u]\ar[r] & {T}^*X_{1}\times_{X_1}X \ar[r]\ar[u] & {T}^*(X_1/S)\times_{X_1}X \ar[r] \ar[u]_-{\cong} &0\\
&0\ar[u]&0 \ar[u]&& }
\end{gathered}
\end{align}
Since ${\rm pr}_i$ is smooth,
by \cite[Corollary 8.15]{Sai16}, we have 
\[
SS({\rm pr}_i^\ast \mathcal F_i, X/k)={\rm pr}_i^{\circ}SS(\mathcal F_i,X_i/k)=SS(\mathcal F_i,X_i/k)\times_{X_i}X.
\] 
It follows from \eqref{eq:lem:boxtimesTrans102} and \eqref{eq:lem:boxtimesTrans103} that 
${\rm pr}_1^{\circ}SS(\mathcal F_1,X_1/k)\cap {\rm pr}_2^{\circ}SS(\mathcal F_2,X_2/k) \subseteq  {T}_X^*X$.  
Thus $SS({\rm pr}_1^\ast \mathcal F_1, X/k)\cap SS({\rm pr}_2^\ast \mathcal F_2, X/k)\subseteq T^\ast_XX
$.
This proves  \eqref{eq:lem:boxtimesTrans00}.

Now we consider the cartesian diagram
\begin{align}\label{eq:lem:boxtimesTrans104}
\begin{gathered}
 \xymatrix{ X=X_1\times_SX_2 \ar[r]^-i \ar[d] & X_1\times_kX_2\ar[d]\\
S\ar[r]^-{\delta} &S\times_kS\ar@{}|\Box[ul]
}
\end{gathered}
\end{align}
where $\delta\colon S\rightarrow S\times_k S$ is the diagonal.
We get the following commutative diagram of vector bundles on $X$ with exact rows:
\[ 
\xymatrix@C=1pc@R=1pc{
&&T^\ast X_1\times_S T^\ast X_2\ar@{=}[d]\\
0\ar[r] &\mathcal N_{{X}/(X_1\times_kX_2)} \ar[r] & T^*(X_1\times_kX_2)\times_{X_1\times_kX_2}X \ar[r]^-{di} &T^*X\ar[r]&0\\
0\ar[r]&\mathcal N_{S/(S\times_kS)}\times_SX\ar[u]_-{\cong} \ar[r]&T^{*}(S\times_kS)\times_{S\times_kS}X\ar[u]\ar[r]^-{d\delta} &T^*S\times_S X\ar[u] \ar[r]&0\\
&T^\ast S\times_SX\ar@{=}[u]\ar[r]&(T^\ast S\times_S X_1)\times_S (T^\ast S\times_SX_2)\ar@{=}[u],
}\]
where $\mathcal N_{S/(S\times_kS)}$ is the conormal bundle associated to $\delta\colon S\to S\times_k S$.
By \cite[Theorem 2.2.3]{Sai17}, we have  $SS(\mathcal F_1\boxtimes_k^L\mathcal F_2, X_1\times_kX_2/k)=SS(\mathcal F_1, X_1/k)\times SS(\mathcal F_2, X_2/k)$. Therefore  by \eqref{eq:lem:boxtimesTrans102},
$\mathcal N_{{X}/(X_1\times_kX_2)} \cap SS(\mathcal F_1\boxtimes_k^L\mathcal F_2, X_1\times_kX_2/k)$ is 
contained in the zero section of $\mathcal N_{{X}/(X_1\times_kX_2)}$. 
Hence $i\colon X \hookrightarrow X_1\times_kX_2$ is 
$SS(\mathcal F_1\boxtimes^L_k\mathcal F_2, X_1\times_kX_2/k)$-transversal. 
Now the assertion \eqref{eq:lem:boxtimesTrans} follows from \cite[Lemma 4.2.4]{Sai16}.
\end{proof}

\begin{proposition}\label{pro:doubleRHOMiso} 
Under the notation in \ref{Sec4:notation},
we assume that
\begin{enumerate}
\item $SS(\mathcal E_i,X_i/k)\cap SS(\mathcal F_i,X_i/k)\subseteq T^*_{X_i}X_i $ for all $i=1,2$;
\item $\pi_1\colon X_1\to S$ is $SS(\mathcal E_1,X_1/k)$-transversal or $\pi_2\colon X_2\to S$ is $SS(\mathcal F_2,X_2/k)$-transversal;
\item  $\pi_1\colon X_1\to S$ is $SS(\mathcal F_1,X_1/k)$-transversal or $\pi_2\colon X_2\to S$ is $SS(\mathcal E_2,X_2/k)$-transversal.
\end{enumerate}
Then the following canonical map $($cf. \cite[(7.2.2)]{Zh15} and \cite[Expos\'e III, (2.2.4)]{Gro77}$)$
\begin{align}
\label{tensor.pairing} R\mathcal Hom(\mathcal E_1,\mathcal F_1)\boxtimes^L_S R\mathcal Hom(\mathcal E_2,\mathcal F_2) \to R\mathcal Hom(\mathcal E,\mathcal F).
\end{align}
is an isomorphism.
\end{proposition}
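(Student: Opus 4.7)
The strategy is to reduce the assertion to an absolute K\"unneth formula over $k$ via the closed immersion $i\colon X=X_1\times_SX_2\hookrightarrow Y=X_1\times_kX_2$ appearing in \eqref{eq:lem:boxtimesTrans104}. Since $\mathcal{C}_1\boxtimes^L_S\mathcal{C}_2\cong i^*(\mathcal{C}_1\boxtimes^L_k\mathcal{C}_2)$ for any $\mathcal{C}_i\in D^b_c(X_i,\Lambda)$, writing $\mathcal{A}=\mathcal{E}_1\boxtimes^L_k\mathcal{E}_2$ and $\mathcal{B}=\mathcal{F}_1\boxtimes^L_k\mathcal{F}_2$, the canonical map \eqref{tensor.pairing} factors as
\[
i^*\bigl[R\mathcal{Hom}(\mathcal{E}_1,\mathcal{F}_1)\boxtimes^L_k R\mathcal{Hom}(\mathcal{E}_2,\mathcal{F}_2)\bigr] \xrightarrow{i^*\mu_k} i^*R\mathcal{Hom}(\mathcal{A},\mathcal{B}) \longrightarrow R\mathcal{Hom}(i^*\mathcal{A},i^*\mathcal{B}),
\]
and I will prove that each arrow is an isomorphism.

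First I establish the \emph{absolute} K\"unneth formula, i.e.\ that $\mu_k$ is an isomorphism. Applying Lemma~\ref{Lem:Kun} to $(\mathcal{E}_i,\mathcal{F}_i)$ on $X_i$ yields $R\mathcal{Hom}(\mathcal{E}_i,\mathcal{F}_i)\cong R\mathcal{Hom}(\mathcal{E}_i,\Lambda)\otimes\mathcal{F}_i$; its hypothesis $SS(\mathcal{F}_i\boxtimes^L_k D\mathcal{E}_i)\subseteq T^*_\Delta(X_i\times X_i)$ is equivalent to condition (1), since by \cite[Theorem 2.2.3]{Sai17} we have $SS(\mathcal{F}_i\boxtimes^L_k D\mathcal{E}_i)=SS(\mathcal{F}_i)\times SS(\mathcal{E}_i)$, and a nonzero element $(\xi_1,-\xi_2)$ of this product sits in the antidiagonal conormal $T^*_\Delta(X_i\times X_i)$ precisely when $\xi_1=\xi_2$ exhibits a common nonzero cotangent vector in $SS(\mathcal{E}_i)\cap SS(\mathcal{F}_i)$. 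Combining the two identities on $X_1,X_2$ with K\"unneth for tensor products and for $R\mathcal{Hom}(-,\Lambda)$ on smooth schemes, and then applying Lemma~\ref{Lem:Kun} on $Y$ (whose hypothesis again reduces to (1) by the same product formula), rewrites the left side as $R\mathcal{Hom}(\mathcal{A},\mathcal{B})$.

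Second I show that $i^*$ commutes with $R\mathcal{Hom}(\mathcal{A},\mathcal{B})$. By Lemma~\ref{lem:boxtimesTrans} and its symmetric form (obtained by swapping $X_1$ and $X_2$), each of conditions (2) and (3) implies that $i$ is $SS$-transversal to at least one of $\mathcal{A},\mathcal{B}$. Assume by symmetry that $i$ is $SS(\mathcal{A})$-transversal. Using the identification $R\mathcal{Hom}(\mathcal{A},\mathcal{B})\cong D_Y\mathcal{A}\otimes\mathcal{B}$ from Step~1 (up to the standard twist turning $R\mathcal{Hom}(-,\Lambda)$ into $D_Y$), we obtain
\[
i^*R\mathcal{Hom}(\mathcal{A},\mathcal{B})\cong i^*D_Y\mathcal{A}\otimes i^*\mathcal{B}.
\]
By \cite[Proposition 8.13]{Sai16}, the $SS(\mathcal{A})$-transversality of $i$ yields $i^*D_Y\mathcal{A}\cong D_Xi^*\mathcal{A}$ up to the shift and twist matching the codimension of $i$. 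Finally, Lemma~\ref{Lem:Kun} applied on $X$ to $(i^*\mathcal{A},i^*\mathcal{B})$ identifies $D_Xi^*\mathcal{A}\otimes i^*\mathcal{B}$ with $R\mathcal{Hom}(i^*\mathcal{A},i^*\mathcal{B})$; its hypothesis follows from the bounds $SS(i^*\mathcal{A})\subseteq i^\circ SS(\mathcal{A})$ and $SS(i^*\mathcal{B})\subseteq i^\circ SS(\mathcal{B})$ of \cite[Lemma 4.2.4]{Sai16} combined with condition (1). The case in which $i$ is $SS(\mathcal{B})$-transversal is handled symmetrically through the biduality identification $R\mathcal{Hom}(\mathcal{A},\mathcal{B})\cong D_Y(\mathcal{A}\otimes D_Y\mathcal{B})$. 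The main obstacle is precisely this second step: certifying that the asymmetric mixing of $\mathcal{E}_i, \mathcal{F}_i$ in (2) and (3) is enough to put $i$ in $SS$-transversal position with respect to at least one of $\mathcal{A},\mathcal{B}$, and then carefully tracking singular supports of pullbacks along $i$ to verify the Lemma~\ref{Lem:Kun} hypothesis on $X$.
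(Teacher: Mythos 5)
Your overall skeleton — turning each $R\mathcal Hom$ into a tensor product with a dual via Lemma \ref{Lem:Kun}, and controlling singular supports on $X=X_1\times_SX_2$ through the closed immersion $i\colon X_1\times_SX_2\hookrightarrow X_1\times_kX_2$ of Lemma \ref{lem:boxtimesTrans} — matches the paper's (the paper stays with $\boxtimes^L_S$ on $X$ throughout instead of detouring through $i^*$ of $R\mathcal Hom(\mathcal A,\mathcal B)$, but the ingredients are the same). There is, however, a genuine gap at the decisive step: the final application of Lemma \ref{Lem:Kun} on $X$ to the pair $(i^*\mathcal A,i^*\mathcal B)=(\mathcal E,\mathcal F)$. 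You assert that its hypothesis, in substance $SS(\mathcal E,X/k)\cap SS(\mathcal F,X/k)\subseteq T^*_XX$, follows from the bounds $SS(i^*\mathcal A)\subseteq i^{\circ}SS(\mathcal A)$, $SS(i^*\mathcal B)\subseteq i^{\circ}SS(\mathcal B)$ together with condition (1). This is false. Condition (1) controls $SS(\mathcal A)\cap SS(\mathcal B)$ inside $T^*(X_1\times_kX_2)$, but $i^{\circ}$ takes images under $di\colon T^*(X_1\times_kX_2)\times_{X_1\times_kX_2}X\to T^*X$, a surjection whose kernel is the (antidiagonally embedded) $T^*S\times_SX$, as in the diagrams in the proof of Lemma \ref{lem:boxtimesTrans}. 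Two conical sets meeting only in the zero section upstairs can therefore acquire common nonzero covectors downstairs: one must rule out pairs $\xi\in SS(\mathcal A)|_X$, $\eta\in SS(\mathcal B)|_X$ with $\xi-\eta$ a nonzero element of $T^*S\times_SX$, and condition (1) says nothing about this.

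A minimal example: take $S=X_1=X_2=\mathbb A^1_k$ with $\pi_i=\mathrm{id}$, let $\mathcal E_1=\mathcal F_2$ be a skyscraper sheaf at the origin and $\mathcal F_1=\mathcal E_2=\Lambda$. Condition (1) holds, $i$ is the diagonal, yet $i^{\circ}SS(\mathcal A)\cap i^{\circ}SS(\mathcal B)$ is the whole fiber $T^*_0\mathbb A^1$, and indeed $R\mathcal Hom(\mathcal E_1,\Lambda)\otimes^L\mathcal F_2$ differs from $R\mathcal Hom(\mathcal E_1,\mathcal F_2)$ by a shift and a twist. (Conditions (2) and (3) fail here, so the proposition itself is not contradicted.) What actually forces $i^{\circ}SS(\mathcal A)\cap i^{\circ}SS(\mathcal B)\subseteq T^*_XX$ is precisely the pair of hypotheses (2) and (3), via an argument in the spirit of \cite[Lemma 2.7.2]{Sai17}; in your write-up those hypotheses are used only to make $i$ transversal to one of $SS(\mathcal A)$, $SS(\mathcal B)$, which is a different and insufficient use. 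Until the containment $SS(\mathcal E,X/k)\cap SS(\mathcal F,X/k)\subseteq T^*_XX$ is derived from (2) and (3), the last arrow $(i^*\mathcal A)^{\vee}\otimes^Li^*\mathcal B\to R\mathcal Hom(i^*\mathcal A,i^*\mathcal B)$ is not known to be an isomorphism and the proof is incomplete. (Your Step 1, the absolute K\"unneth formula over $k$ from condition (1), is fine.)
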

If $S$ is the spectrum of a field, then the above result is proved in \cite[Expos\'e III, Proposition 2.3]{Gro77}. Our proof below is different from  that of \emph{loc.cit.} and is based on \cite{Sai16}.
\begin{proof}
In the following, we put $\mathcal E_i^{\vee}\coloneq R\mathcal Hom(\mathcal E_i, \Lambda) $.
Since $SS(\mathcal E_i,X_i/k)\cap SS(\mathcal F_i,X_i/k)\subseteq T^*_{X_i}X_i $, Lemma \ref{Lem:Kun} implies that 
\begin{equation}
\mathcal F_i\otimes^L \mathcal E_i^{\vee}= \mathcal F_i\otimes^LR\mathcal Hom(\mathcal E_i, \Lambda) \xrightarrow{\cong} R\mathcal Hom(\mathcal E_i, \mathcal F_i),\; {\rm for\; all }\;i=1,2,
\end{equation}
Hence we have
\begin{align}
\label{isom.lhs} R\mathcal Hom(\mathcal E_1,\mathcal F_1)\boxtimes^L_S R\mathcal Hom(\mathcal E_2,\mathcal F_2)\cong (\mathcal F_1\otimes^L\mathcal E_1^{\vee})\boxtimes^L_S(\mathcal F_2 \otimes^L\mathcal E_2^{\vee})\\
\nonumber \cong (\mathcal F_1\boxtimes^L_S\mathcal F_2)\otimes^L(\mathcal E_1^{\vee} \boxtimes^L_S\mathcal E_2^{\vee}).
\end{align}
Note that we also have
\begin{align}
\nonumber\mathcal E_1^{\vee} \boxtimes^L_S\mathcal E_2^{\vee}&={\rm pr}_1^*R\mathcal Hom(\mathcal E_1,\Lambda) \otimes^L{\rm pr}_2^*R\mathcal Hom(\mathcal E_2,\Lambda)\\
  \label{isom.dual} &\cong R\mathcal Hom({\rm pr}_1^*\mathcal E_1,\Lambda) \otimes^LR\mathcal Hom({\rm pr}_2^*\mathcal E_2,\Lambda)\\
 \nonumber &\overset{(a)}{\cong} R\mathcal Hom({\rm pr}_1^*\mathcal E_1, R\mathcal Hom({\rm pr}_2^*\mathcal E_2,\Lambda))\\
 \nonumber &\cong R\mathcal Hom({\rm pr}_1^*\mathcal E_1\otimes^L{\rm pr}_2^*\mathcal E_2,\Lambda)=\mathcal E^{\vee},
 \end{align}
 where the isomorphism (a) follows from Lemma \ref{Lem:Kun} by the fact that (cf. Lemma \ref{lem:boxtimesTrans})
\[SS({\rm pr}_1^*\mathcal E_1,X/k)\cap SS({\rm pr}_2^*\mathcal E_2,X/k)\subseteq T^*_XX.
\]
By Lemma \ref{lem:boxtimesTrans}, we have
\begin{align}
\nonumber&SS(\mathcal E,X/k)\cap SS(\mathcal F,X/k)\\
\nonumber&\subseteq i^\circ(SS(\mathcal E_1\boxtimes^L_k\mathcal E_2, X_1\times_k X_2/k))\cap i^\circ(SS(\mathcal F_1\boxtimes^L_k\mathcal F_2, X_1\times_k X_2/k))\\
\nonumber&\overset{(b)}{=}i^\circ(SS(\mathcal E_1, X_1)\times SS(\mathcal E_2, X_2))\cap i^\circ(SS(\mathcal F_1, X_1)\times SS(\mathcal F_2, X_2))\\
\nonumber&\overset{(c)}{\subseteq} T^\ast _XX,
\end{align}
where the equality (b) follows from \cite[Theorem 2.2.3]{Sai17}, and $(c)$ follows from the assumptions (2) and (3) (cf. \cite[Lemma 2.7.2]{Sai17}). Thus by Lemma \ref{Lem:Kun}, we have
\begin{align}\label{eq:isomKunDual}
\mathcal F\otimes^L \mathcal E^{\vee}\cong R\mathcal Hom(\mathcal E, \mathcal F).
\end{align}
Combining \eqref{isom.lhs}, \eqref{isom.dual} and \eqref{eq:isomKunDual}, we get
\begin{align}
R\mathcal Hom(\mathcal E_1,\mathcal F_1)\boxtimes^L_S R\mathcal Hom(\mathcal E_2,\mathcal F_2)\cong \mathcal F\otimes^L \mathcal E^{\vee}\cong R\mathcal Hom(\mathcal E, \mathcal F).
\end{align}
This finishes the proof.
\end{proof}

\subsubsection{K\"unneth formula}
We have the following canonical morphism
\begin{equation}\label{eqs:Kunn}
\mathcal F_1\boxtimes^L_S R\mathcal Hom(\mathcal F_2, \pi_2^!\Lambda_S) \to R\mathcal Hom({\rm pr}_2^*\mathcal F_2, {\rm pr}_1^!\mathcal F_1),
\end{equation}
by taking the adjunction of the following composition map
\begin{align}
\nonumber {\rm pr}_1^*\mathcal F_1\otimes {\rm pr}_2^*R\mathcal Hom(\mathcal F_2, \pi_2^!\Lambda_S) &\otimes {\rm pr}_2^*\mathcal F_2 \to {\rm pr}_1^*\mathcal F_1\otimes {\rm pr}_2^*(\mathcal F_2 \otimes R\mathcal Hom(\mathcal F_2, \pi_2^!\Lambda_S))\\
\nonumber &\xrightarrow{\rm evaluation}  {\rm pr}_1^*\mathcal F_1\otimes {\rm pr}_2^*\pi_2^!\Lambda_S 
\to {\rm pr}_1^*\mathcal F_1\otimes{\rm pr}_1^!\Lambda_{X_1} \to {\rm pr}_1^!\mathcal F_1.
\end{align}
%where the two invertible arrows follows from purity \cite[XVI, Th\'eor\`eme 3.1.1]{ILO14}, since all the related morphisms are smooth.

\begin{corollary}\label{cor:SSKunn}
Assume that  $\pi_1\colon X_1\to S$ is $SS(\mathcal F_1, X_1/k)$-transversal
or $\pi_2\colon X_2\to S$ is $SS(\mathcal F_2, X_2/k)$-transversal.
Then the canonical map \eqref{eqs:Kunn} is an isomorphism.
\end{corollary}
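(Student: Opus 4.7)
The plan is to deduce this K\"unneth-type isomorphism from Proposition \ref{pro:doubleRHOMiso} by a specific choice of the four input objects. Namely, I would apply the proposition with
\[
\mathcal E_1=\Lambda_{X_1},\qquad \mathcal F_1=\mathcal F_1,\qquad \mathcal E_2=\mathcal F_2,\qquad \mathcal F_2\text{ (in the proposition)}=\pi_2^!\Lambda_S.
\]
With these choices the left-hand side of \eqref{tensor.pairing} reads
\[
R\mathcal Hom(\Lambda_{X_1},\mathcal F_1)\boxtimes^L_S R\mathcal Hom(\mathcal F_2,\pi_2^!\Lambda_S)\;\cong\;\mathcal F_1\boxtimes^L_S R\mathcal Hom(\mathcal F_2,\pi_2^!\Lambda_S),
\]
which is the source of \eqref{eqs:Kunn}.

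For the right-hand side, I would exploit smoothness: since $\pi_2$ is smooth of relative dimension $n_2$, the sheaf $\pi_2^!\Lambda_S\cong \Lambda_{X_2}(n_2)[2n_2]$ is invertible, and the base change ${\rm pr}_1$ of $\pi_2$ is also smooth of relative dimension $n_2$, giving a canonical isomorphism ${\rm pr}_2^\ast\pi_2^!\Lambda_S\cong {\rm pr}_1^!\Lambda_{X_1}$. Consequently
\[
\Lambda_{X_1}\boxtimes^L_S\mathcal F_2={\rm pr}_2^\ast\mathcal F_2,\qquad \mathcal F_1\boxtimes^L_S\pi_2^!\Lambda_S={\rm pr}_1^\ast\mathcal F_1\otimes^L{\rm pr}_1^!\Lambda_{X_1}={\rm pr}_1^!\mathcal F_1,
\]
so the target of \eqref{tensor.pairing} becomes exactly $R\mathcal Hom({\rm pr}_2^\ast\mathcal F_2,{\rm pr}_1^!\mathcal F_1)$.

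It remains to verify the three hypotheses of Proposition \ref{pro:doubleRHOMiso} and to match the two pairing maps. Condition (1) is automatic because $SS(\Lambda_{X_1},X_1/k)$ and $SS(\pi_2^!\Lambda_S,X_2/k)$ are both equal to the zero section, so the intersections with $SS(\mathcal F_1,X_1/k)$ and $SS(\mathcal F_2,X_2/k)$ respectively are contained in the zero section. Condition (2) asks that $\pi_1$ be transversal to $SS(\Lambda_{X_1},X_1/k)=T^\ast_{X_1}X_1$, which holds for any smooth morphism since $d\pi_1\colon T^\ast S\times_SX_1\to T^\ast X_1$ is injective by the exact sequence \eqref{eq:lem:boxtimesTrans101}. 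Condition (3) becomes precisely the dichotomy assumed in the corollary. The one remaining thing to check is that the isomorphism produced by Proposition \ref{pro:doubleRHOMiso}, after the identifications above, coincides with the canonical map \eqref{eqs:Kunn} built from adjunction and evaluation; this is a formal naturality check, and I expect it to be the least conceptually interesting but the most delicate step, amounting to unraveling both constructions back to the same adjoint of the composition ${\rm pr}_1^\ast\mathcal F_1\otimes{\rm pr}_2^\ast R\mathcal Hom(\mathcal F_2,\pi_2^!\Lambda_S)\otimes{\rm pr}_2^\ast\mathcal F_2\to{\rm pr}_1^!\mathcal F_1$.
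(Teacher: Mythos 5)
Your proof is correct and follows essentially the same route as the paper: both apply Proposition \ref{pro:doubleRHOMiso} with $\mathcal E_1=\Lambda_{X_1}$, $\mathcal E_2=\mathcal F_2$ and the second "$\mathcal F$" replaced by $\pi_2^!\Lambda_S$, then use smoothness of ${\rm pr}_1$ together with base change to rewrite the target as $R\mathcal Hom({\rm pr}_2^*\mathcal F_2,{\rm pr}_1^!\mathcal F_1)$. Your write-up is more explicit about which inputs are chosen and why the hypotheses of the proposition hold, and it honestly flags the map-comparison step that the paper leaves implicit, but the underlying argument is the same.
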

If $S$ is the spectrum of a  field, then the above result is proved in \cite[Expos\'e III, (3.1.1)]{Gro77}. Our proof below is different from that of \emph{loc.cit}.

\begin{proof}
By Proposition \ref{pro:doubleRHOMiso}, we have the following isomorphisms 
\begin{align}
\nonumber \mathcal F_1\boxtimes^L_S R\mathcal Hom(\mathcal F_2, \pi_2^!\Lambda_S) &\overset{Prop.{\ref{pro:doubleRHOMiso}}}{\cong} R\mathcal Hom({\rm pr}_2^*\mathcal F_2, {\rm pr}_1^\ast\mathcal F_1\otimes {\rm pr}_1^!\Lambda_S)\\
\nonumber&\overset{(a)}{\cong} R\mathcal Hom({\rm pr}_2^*\mathcal F_2, {\rm pr}_1^!\mathcal F_1),
\end{align}
where $(a)$ follows from the fact that ${\rm pr}_1$ is smooth (cf. \cite[XVI, Th\'eor\`eme 3.1.1]{ILO14} and \cite[XVIII, Theor\'eme 3.2.5]{SGA4T3}).
\end{proof}
%\begin{remark}
%In Corollary \ref{cor:SSKunn},  
%It seems that we only need to assume that $\pi_i$ is universally locally acyclic relatively to $\mathcal F_i$ for $i=1,2$.
%\end{remark}
\begin{definition}
Let  $X_i, \mathcal F_i$  be as in \ref{Sec4:notation} for $i=1,2$. A \emph{relative correspondence} between $X_1$ and $X_2$ is a scheme $C$ over $S$ with morphisms $c_1\colon C\to X_1$ and $c_2\colon C\to X_2$ over $S$. We put $c=(c_1,c_2)\colon C\to X_1\times_S X_2$ the corresponding morphism. A morphism $u\colon c_2^*\mathcal F_2\to c_1^!\mathcal{F}_1$ is called a \emph{relative cohomological correspondence} from $\mathcal F_2$ to $\mathcal{F}_1$ on $C$.
\end{definition}
\subsubsection{}Given a correspondence $C$ as above, we recall that there is a canonical isomorphism \cite[XVIII, 3.1.12.2]{SGA4T3}
\begin{equation}\label{eq:pullbackupp}
R\mathcal Hom(c_2^*\mathcal F_2, c_1^!\mathcal F_1)
\xrightarrow{\cong}
c^!R\mathcal Hom({\rm pr}_2^*\mathcal F_2, {\rm pr}_1^!\mathcal F_1).
\end{equation}

\subsubsection{}
For $i=1,2$, consider the following diagram of $S$-morphisms
\[ \xymatrix{ X_i \ar[rr]^{f_i}\ar[dr]_{\pi_i} && Y_i \ar[dl]^{q_i}\\
&S,&
}\]
where $\pi_i$ and $q_i$ are smooth morphisms. We put $X\coloneq X_1\times_SX_2$, $Y\coloneq Y_1\times_SY_2$ and $f\coloneq f_1\times_Sf_2\colon X \to  Y$. 
Let $\mathcal M_i\in D^b_c(Y_i,\Lambda)$ for $i=1,2$. We  have a canonical map (cf. \cite[Construction 7.4]{Zh15} and \cite[Expos\'e III, (1.7.3)]{Gro77})
\begin{equation}\label{morp.Kunneth}
f_1^!\mathcal M_1\boxtimes^L_S f_2^!\mathcal M_2 \to f^!(\mathcal M_1\boxtimes^L_S\mathcal M_2)
\end{equation}
which is adjoint to the composite 
\begin{align}
f_!(f_1^!\mathcal M_1\boxtimes^L_S f_2^!\mathcal M_2)\xrightarrow[(a)]{\simeq} f_{1!}f_1^!\mathcal M_1\boxtimes^L_S f_{2!}f_2^!\mathcal M_2\xrightarrow{{\rm adj}\boxtimes{\rm adj}}\mathcal M_1\boxtimes^L_S\mathcal M_2
\end{align}
where (a) is the K\"unneth isomorphism \cite[XVII, Th\'eor\`eme 5.4.3]{SGA4T3}. 
\begin{proposition}\label{prop:fUpperIsomKun}
If $q_2\colon Y_2\to S$ is $SS(\mathcal M_2, Y_2/k)$-transversal, then the map \eqref{morp.Kunneth} is an isomorphism.
\end{proposition}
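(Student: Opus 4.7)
The strategy is to reduce the K\"unneth isomorphism for the exceptional inverse image to the $R\mathcal Hom$-version already established in Corollary \ref{cor:SSKunn}, by means of relative Verdier duality on $Y_2$. Throughout, write $D_{Y_2/S}(\mathcal M_2) \coloneq R\mathcal Hom(\mathcal M_2, q_2^!\Lambda_S)$, and likewise $D_{X_2/S}$; biduality and the compatibility $f_2^! \cong D_{X_2/S}\circ f_2^*\circ D_{Y_2/S}$ will be used throughout.

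First, I would apply Corollary \ref{cor:SSKunn} on $Y = Y_1\times_S Y_2$ with $\mathcal F_1 = \mathcal M_1$ and $\mathcal F_2 = D_{Y_2/S}(\mathcal M_2)$. Since the Verdier dual preserves the singular support, our hypothesis that $q_2$ is $SS(\mathcal M_2, Y_2/k)$-transversal also gives that $q_2$ is $SS(D_{Y_2/S}(\mathcal M_2), Y_2/k)$-transversal, so the corollary applies. Together with biduality $D_{Y_2/S}D_{Y_2/S}(\mathcal M_2) \cong \mathcal M_2$, this yields
\[
\mathcal M_1 \boxtimes_S^L \mathcal M_2 \cong R\mathcal Hom(\widehat{\mathrm{pr}}_2^* D_{Y_2/S}(\mathcal M_2),\, \widehat{\mathrm{pr}}_1^! \mathcal M_1),
\]
where $\widehat{\mathrm{pr}}_i \colon Y\to Y_i$ denote the projections.

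Next, I would apply $f^!$ to both sides and invoke the identity \eqref{eq:pullbackupp} to commute $f^!$ with $R\mathcal Hom$, obtaining
\[
f^!(\mathcal M_1 \boxtimes_S^L \mathcal M_2) \cong R\mathcal Hom(\mathrm{pr}_2^* f_2^* D_{Y_2/S}(\mathcal M_2),\, \mathrm{pr}_1^! f_1^! \mathcal M_1).
\]
In the third step, I would apply Corollary \ref{cor:SSKunn} in the reverse direction on $X = X_1\times_S X_2$ with $\mathcal F_1 = f_1^! \mathcal M_1$ and $\mathcal F_2 = f_2^* D_{Y_2/S}(\mathcal M_2)$. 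The relative-duality identification $R\mathcal Hom(f_2^*D_{Y_2/S}(\mathcal M_2), \pi_2^!\Lambda_S) \cong f_2^! \mathcal M_2$ then turns the left-hand side of the corollary into $f_1^!\mathcal M_1 \boxtimes_S^L f_2^!\mathcal M_2$, and combining with the previous display produces the desired isomorphism. A final check that this composite agrees with the adjunction-defined map \eqref{morp.Kunneth} is a diagram chase using the unit/counit of the adjunctions $(f_!, f^!)$ and $(f_{i!}, f_i^!)$ together with the K\"unneth isomorphism for $f_!$.

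The hard part will be verifying the transversality hypothesis needed to apply Corollary \ref{cor:SSKunn} in the third step, namely that $\pi_2$ is $SS(f_2^* D_{Y_2/S}(\mathcal M_2), X_2/k)$-transversal (or, as an alternative, that $\pi_1$ is $SS(f_1^!\mathcal M_1, X_1/k)$-transversal). Since $\pi_2 = q_2 \circ f_2$ with $q_2$ smooth and $SS(\mathcal M_2)$-transversal, I expect to combine the non-characteristic base-change estimate $SS(f_2^*\mathcal G) \subseteq f_2^\circ SS(\mathcal G)$ (valid whenever $f_2$ is $SS(\mathcal G)$-transversal) with the $SS$-invariance under Verdier dual in order to reduce the required transversality of $\pi_2$ to the hypothesized $SS(\mathcal M_2)$-transversality of $q_2$; this is the subtle micro-local step in the argument and is where the transversality hypothesis of the proposition is effectively consumed.
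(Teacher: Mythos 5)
Your strategy (relative biduality on $Y_2$ plus two applications of Corollary \ref{cor:SSKunn}) is in the right spirit, and you have correctly located the crux of the matter — but the "subtle micro-local step" you flag is in fact a genuine gap that your approach cannot close. To apply Corollary \ref{cor:SSKunn} in your third step on $X_1\times_S X_2$ you need $\pi_2$ to be $SS(f_2^*D_{Y_2/S}(\mathcal M_2), X_2/k)$-transversal (or $\pi_1$ to be $SS(f_1^!\mathcal M_1, X_1/k)$-transversal). The estimate $SS(f_2^*\mathcal G)\subseteq f_2^\circ SS(\mathcal G)$ you propose to use is only valid when $f_2$ is itself $SS(\mathcal G)$-transversal; however $f_2\colon X_2\to Y_2$ is just an arbitrary $S$-morphism — nothing in the hypothesis controls it, and there is no general bound on $SS(f_2^*\mathcal G)$ in terms of $SS(\mathcal G)$ without such a condition. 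Moreover, even granting such an estimate, deducing "$\pi_2 = q_2\circ f_2$ is $f_2^\circ SS(\mathcal G)$-transversal" from "$q_2$ is $SS(\mathcal G)$-transversal" needs $df_2^*$ to be injective on cotangent fibers, i.e.\ $f_2$ smooth, which again is not assumed. Both hurdles are therefore impassable at the stated level of generality, and the alternative condition on $\pi_1$ fails for the same reason with $f_1^!$ in place of $f_2^*$.

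The paper's proof avoids this entirely by first reducing to the case $X_2 = Y_2$, $f_2 = {\rm id}$ via the factorization $f = (f_1\times_S{\rm id}_{Y_2})\circ({\rm id}_{X_1}\times_S f_2)$, so that the only remaining statement to prove is $f_1^!\mathcal M_1\boxtimes^L_S\mathcal M_2 \cong (f_1\times{\rm id})^!(\mathcal M_1\boxtimes^L_S\mathcal M_2)$. After this reduction the sheaf on the second factor is never transported along $f_2$: one writes $\mathcal M_2\cong R\mathcal Hom(\mathcal L_2, q_2^!\Lambda_S)$ with $\mathcal L_2 = D_{Y_2}(\mathcal M_2)(-\dim S)[-2\dim S]$ and $SS(\mathcal L_2)=SS(\mathcal M_2)$, and then both applications of Corollary \ref{cor:SSKunn} — one on $Y_1\times_S Y_2$ to obtain \eqref{eq:prop:fUpperIsomKun101}, the other on $X_1\times_S Y_2$ to obtain \eqref{eq:prop:fUpperIsomKun102} — invoke exactly the given hypothesis that $q_2$ is $SS(\mathcal L_2)$-transversal, because in both of them the second tensor factor is $\mathcal L_2$ living on $Y_2$. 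You should carry out this reduction to $f_2 = {\rm id}$ before invoking Corollary \ref{cor:SSKunn}; as written, your argument has no access to the transversality it needs.
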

If $S$ is the spectrum of a field,  the above result is proved in \cite[Expos\'e III, Proposition 1.7.4]{Gro77}.
\begin{proof}
Consider the following cartesian  diagrams
\[ \xymatrix{
X_1\times_SX_2 \ar[r]^{f_1\times {\rm id}}\ar[dr]^f\ar[d]_{{\rm id}\times f_2} &Y_1\times_SX_2\ar[r] \ar[d]^{{\rm id}\times f_2} &X_2\ar[d]^{f_2}\\
X_1\times_SY_2 \ar[d]_{\rm pr_1}\ar[r]^{f_1\times {\rm id}}&Y_1\times_SY_2\ar[d]^{\rm pr_1} \ar[r]^-{\rm pr_2} & Y_2\ar[d]^{q_2}\\
X_1\ar[r]^{f_1}\ar[dr]_{\pi_1} &Y_1 \ar[r]^{q_1} \ar[d]^{q_1} & S\\
&S.&
}\]
We may assume that $X_2=Y_2$ and $f_2={\rm id}$, i.e., it suffices to show that the canonical map
\begin{equation}\label{eq:prop:fUpperIsomKun100}
f_1^!\mathcal M_1\boxtimes^L_S\mathcal M_2 \xrightarrow{\cong} (f_1\times {\rm id})^!(\mathcal M_1\boxtimes^L_S \mathcal M_2).
\end{equation}
is an isomorphism.
Since we have
\begin{align}
\nonumber\mathcal M_2\cong D_{Y_2}D_{Y_2}\mathcal M_2&\cong R\mathcal Hom(D_{Y_2}\mathcal M_2, \mathcal K_{Y_2})\\
\nonumber&\cong R\mathcal Hom(D_{Y_2}(\mathcal M_2)(-{\rm dim}S)[-2{\rm dim}S], q_2^!\Lambda_S),
\end{align}
we may assume $\mathcal M_2= R\mathcal Hom(\mathcal L_2, q_2^!\Lambda_S)$ for some $\mathcal L_2\in D^b_c(Y_2, \Lambda)$.
By \cite[Corollary 4.9]{Sai16}, we have $SS(\mathcal M_2, Y_2/k)=SS(\mathcal L_2, Y_2/k)$.
Thus by assumption, the morphism $q_2\colon Y_2\to S$ is $SS(\mathcal L_2, Y_2/k)$-transversal.
By Corollary \ref{cor:SSKunn},  we have an isomorphism
\begin{align}\label{eq:prop:fUpperIsomKun101}
&\mathcal M_1\boxtimes^L_S R\mathcal Hom(\mathcal L_2, q_2^!\Lambda_S)\cong R\mathcal Hom({\rm pr}_2^\ast\mathcal L_2, {\rm pr}_1^!\mathcal M_1) \quad{\rm in}\, D_c^b(Y_1\times_S Y_2,\Lambda),\\
\label{eq:prop:fUpperIsomKun102}&
f_1^!\mathcal M_1\boxtimes^L_S R\mathcal Hom (\mathcal L_2, q_2^!\Lambda_S)\cong
R\mathcal Hom((f_1\times {\rm id})^*{\rm pr}_2^*\mathcal L_2, {\rm pr}_1^!f_1^!\mathcal M_1)  \,\,{\rm in}\, D_c^b(X_1\times_S Y_2,\Lambda). 
\end{align}
We have 
\begin{align}
\nonumber (f_1\times {\rm id})^!(\mathcal M_1\boxtimes^L_S \mathcal M_2)&\overset{\quad\qquad}{=}(f_1\times{\rm id})^!(\mathcal M_1\boxtimes^L_S R\mathcal Hom(\mathcal L_2, q_2^!\Lambda_S))\\
\nonumber&\overset{\eqref{eq:prop:fUpperIsomKun101}}{\cong} (f_1\times{\rm id})^!(R\mathcal Hom({\rm pr}_2^\ast\mathcal L_2, {\rm pr}_1^!\mathcal M_1))\\
&\overset{\eqref{eq:pullbackupp}}{\cong} R\mathcal Hom((f_1\times {\rm id})^*{\rm pr}_2^*\mathcal L_2, (f_1\times {\rm id})^!{\rm pr}_1^!\mathcal M_1)\\
\nonumber&\overset{\quad\qquad}{\cong} R\mathcal Hom((f_1\times {\rm id})^*{\rm pr}_2^*\mathcal L_2, {\rm pr}_1^!f_1^!\mathcal M_1)\\
\nonumber&\overset{\eqref{eq:prop:fUpperIsomKun102}}{\cong} f_1^!\mathcal M_1\boxtimes^L_S R\mathcal Hom (\mathcal L_2, q_2^!\Lambda_S) \cong f_1^!\mathcal M_1\boxtimes^L_S \mathcal M_2.
\end{align}
This finishes the proof.
\end{proof}

\subsection{Relative cohomological characteristic class}
\subsubsection{} We introduce some notation for convenience. For any commutative diagram 
 \[
 \xymatrix{
 W\ar[rd]_-f\ar[rr]^-h&& V\ar[ld]^-g\\
&{\rm Spec}k&
 }
 \]
 of schemes, we put
\begin{align}
&\mathcal K_W\coloneq Rf^!\Lambda,\quad \\
&\mathcal K_{W/V}\coloneq Rh^!\Lambda_{V}.
\end{align}
Under the notation in \ref{Sec4:notation}, by Proposition \ref{prop:fUpperIsomKun},
we have an isomorphism
\begin{align}\label{eq:KWViso}
\mathcal K_{X_1/S}\boxtimes^L_S \mathcal K_{X_2/S}\simeq \mathcal K_{X/S}.
\end{align}
\subsubsection{}Consider a cartesian diagram 
\begin{align}
\begin{gathered}
\xymatrix{
E\ar[rd]^-e\ar[d]\ar[r]&D\ar[d]^-d\\
C\ar[r]^-c&X
}
\end{gathered}
\end{align}
of schemes over $k$.
Let $\mathcal F$, $\mathcal G$ and $\mathcal H$ be objects of $D_c^b(X,\Lambda)$ and $\mathcal F\otimes\mathcal G\rightarrow \mathcal H$ any morphism. By the K\"unneth isomorphism \cite[XVII, Th\'eor\`eme 5.4.3]{SGA4T3} and adjunction, we have 
\[
e_!(c^!\mathcal F\boxtimes^L_X d^! \mathcal G)\xrightarrow{\simeq} c_!c^!\mathcal F\otimes^L d_!d^!\mathcal G\rightarrow \mathcal F\otimes \mathcal G\rightarrow \mathcal H.
\] 
By adjunction, we get a morphism 
\begin{align}\label{eq:pair1-1}
c^!\mathcal F\boxtimes^L_X d^! \mathcal G\rightarrow e^!\mathcal H.
\end{align}
Thus we get a pairing 
\begin{align}\label{eq:pair1}
\langle,\rangle: H^0(C,c^!\mathcal F)\otimes H^0(D,d^!\mathcal G)\rightarrow H^0(E, e^!\mathcal H).
\end{align}
%In particular, if $c$ and $d$ are closed immersions, the above map induces a pairing 
%\begin{align}\label{eq:pair2}
%\langle,\rangle: H^0_C(X,\mathcal F)\otimes H^0_D(X,\mathcal G)\rightarrow H^0_E(X,\mathcal H).
%\end{align}

\subsubsection{}
Now we define the relative Verdier pairing by  applying the map \eqref{eq:pair1} to relative cohomological correspondences.
Let  $\pi_1\colon X_1\rightarrow S$ and $\pi_2\colon X_2\rightarrow S$  be  smooth morphisms. Consider a cartesian diagram 
\begin{align}
\begin{gathered}
\xymatrix@C=5pc{
E\ar[rd]^-e\ar[d]\ar[r]&D\ar[d]^-{d=(d_1,d_2)}\\
C\ar[r]_-{c=(c_1,c_2)}&X=X_1\times_S X_2
}
\end{gathered}
\end{align}
of schemes over $S$. Let $\mathcal F_1\in D_c^b(X_1,\Lambda)$ and $\mathcal F_2\in D_c^b(X_2,\Lambda)$. 
Assume that one of the following conditions holds:
\begin{enumerate}
\item $\pi_1\colon X_1\to S$ is $SS(\mathcal F_1, X_1/k)$-transversal;
\item $\pi_2\colon X_2\to S$ is $SS(\mathcal F_2, X_2/k)$-transversal.
\end{enumerate}
By Corollary \ref{cor:SSKunn}, we have 
\begin{align}
\nonumber&R\mathcal Hom({\rm pr}_2^\ast\mathcal F_2,{\rm pr}_1^!\mathcal F_1)\otimes^L R\mathcal Hom({\rm pr}_1^\ast\mathcal F_1,{\rm pr}_2^!\mathcal F_2)\\
\label{eq:pair3}&\xrightarrow{\simeq}(\mathcal F_1\boxtimes^L_S R\mathcal Hom(\mathcal F_2, \pi_2^!\Lambda_S))\otimes^L( R\mathcal Hom(\mathcal F_1, \pi_1^!\Lambda_S)\boxtimes^L_S\mathcal F_2)\\
\nonumber&\xrightarrow{\rm evaluation} \pi_1^!\Lambda_S\boxtimes^L_S \pi_2^!\Lambda_S\overset{\eqref{eq:KWViso}}{\cong}\mathcal K_{X/S}. 
\end{align}
By \eqref{eq:pullbackupp}, \eqref{eq:pair1-1}, \eqref{eq:pair1} and \eqref{eq:pair3}, we get the following pairings 
\begin{align}
\label{eq:pair4-1}&c_!R\mathcal Hom({c}_2^\ast\mathcal F_2,{c}_1^!\mathcal F_1)\otimes^L d_!R\mathcal Hom({d}_1^\ast\mathcal F_1,{d}_2^!\mathcal F_2)\rightarrow  e_!\mathcal K_{E/S},\\
%\label{eq:pair4-2}&c_\ast R\mathcal Hom({c}_2^\ast\mathcal F_2,{c}_1^!\mathcal F_1)\otimes^L d_\ast R\mathcal Hom({d}_1^\ast\mathcal F_1,{d}_2^!\mathcal F_2)\rightarrow  e_\ast\mathcal K_{E/S}\\
\label{eq:pair4}
&\langle,\rangle:
Hom({c}_2^\ast\mathcal F_2,{c}_1^!\mathcal F_1)\otimes Hom({d}_1^\ast\mathcal F_1,{d}_2^!\mathcal F_2)\rightarrow H^0(E, e^!(\mathcal K_{X/S}))=H^0(E, \mathcal K_{E/S}).
\end{align}
The pairing \eqref{eq:pair4} is called the \emph{relative Verdier pairing} (cf. \cite[Expos\'e III (4.2.5)]{Gro77}).

\begin{definition}\label{def:ccc}
Let $f\colon X\rightarrow S$ be a smooth morphism purely of relative dimension $n$ and $\mathcal F\in D_c^b(X,\Lambda)$. We assume that $f$ is $SS(\mathcal F, X/k)$-transversal.
Let $c=(c_1,c_2)\colon C\rightarrow X\times_SX$ be a closed immersion and $u\colon c_2^\ast\mathcal F\rightarrow c_1^!\mathcal F$ be a relative cohomological correspondence on $C$.
We define the \emph{relative cohomological characteristic class} $ccc_{X/S}(u)$ of $u$ to be the 
cohomology class $\langle u,1\rangle\in H^0_{C\cap X}(X, \mathcal K_{X/S})$ defined by the pairing \eqref{eq:pair4}.

In particular, if $C=X$ and $c\colon C\to X\times_SX$ is the diagonal and if $u\colon \mathcal F\rightarrow \mathcal F$ is the identity, 
we write 
\[ccc_{X/S}(\mathcal F)=\langle1,1\rangle \quad{\rm in}\quad  H^{2n}(X, \Lambda(n))\]
 and call it the \emph{relative cohomological characteristic class} of $\mathcal F$. 
\end{definition}
If $S$ is the spectrum of a perfect field, then the above definition is  \cite[Definition 2.1.1]{AS07}.
\begin{example}
 If $\mathcal F$ is a  locally constant and constructible sheaf of $\Lambda$-modules on $X$, then we have $ccc_{X/S}\mathcal F={\rm rank}\mathcal F\cdot c_n(\Omega^\vee_{X/S})\cap [X]\in CH^n(X)$.
\end{example}

%\begin{remark}
%In Definition \ref{def:ccc}, 
%it seems that we only need to assume that $f$ is universally locally acyclic relatively to $\mathcal F$.
%\end{remark}
\begin{conjecture}\label{conj:ccequality}
Let $S$ be a smooth connected scheme over a perfect field  $k$ of characteristic $p$.
Let $f\colon X\to S$ be a smooth morphism purely of relatively dimension $n$ and $\mathcal F\in D_c^b(X,\Lambda)$. Assume that
$f$ is $SS(\mathcal F, X/k)$-transversal. Let ${\rm cl}\colon {\rm CH}^n(X)\rightarrow  H^{2n}(X, \Lambda(n))$ be the cycle class map. Then we have
\begin{equation}
{\rm cl}(cc_{X/S}(\mathcal F))=ccc_{X/S}(\mathcal F) \quad{\rm in}\quad  H^{2n}(X, \Lambda(n)),
\end{equation}
where $cc_{X/S}(\mathcal F)$ is the relative characteristic class defined in Definition \ref{def:rcclass}. 
\end{conjecture}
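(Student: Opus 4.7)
The plan is to reduce Conjecture \ref{conj:ccequality} to Saito's absolute conjecture \cite[Conjecture 6.8.1]{Sai16} (in its codimension-$n$ incarnation), by establishing a cohomological analog of Proposition \ref{prop:identificationoftwocc}. The guiding principle is that both $cc_{X/S}(\mathcal F)$ and $ccc_{X/S}(\mathcal F)$ should be expressible as the ``relative part'' of the corresponding absolute constructions, via the projection $\theta\colon T^\ast X\to T^\ast(X/S)$ arising from the smoothness of $f$.

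First I would construct a cohomological refinement $ccc_X^n(\mathcal F)\in H^{2n}(X,\Lambda(n))$ by imitating in \'etale cohomology the construction of $cc_X^n(\mathcal F)$ recalled in the proof of Proposition \ref{prop:identificationoftwocc}: take a suitable extension of the cohomological characteristic cycle across the projective compactification $\mathbb P(E)$ of $T^\ast X\oplus \mathbb A^1_X$, then pull back by $\bar i\colon \mathbb P(F)\to \mathbb P(E)$ via the Gysin homomorphism in cohomology. The expected compatibility $\mathrm{cl}(cc_X^n(\mathcal F))=ccc_X^n(\mathcal F)$ is then precisely the codimension-$n$ instance of Saito's conjecture and will be used as a conditional input.

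Next I would prove the cohomological analog of Proposition \ref{prop:identificationoftwocc}, namely
\[
ccc_X^n(\mathcal F)=(-1)^r\cdot ccc_{X/S}(\mathcal F)\quad\text{in }H^{2n}(X,\Lambda(n)).
\]
The argument requires identifying the relative Verdier pairing \eqref{eq:pair4} defining $ccc_{X/S}$ with the restriction of the absolute Verdier pairing on $X\times_k X$ along the closed immersion $X\times_S X\hookrightarrow X\times_k X$. The key compatibility inputs are Lemma \ref{lem:boxtimesTrans} and Corollary \ref{cor:SSKunn}, which guarantee that the absolute and relative K\"unneth formulas agree on the transversal objects considered here and that the two evaluation pairings fit into a commutative square under the canonical isomorphism $\mathcal K_X\cong \mathcal K_{X/S}(r)[2r]$. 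Combining this identification with Proposition \ref{prop:identificationoftwocc} yields the conjecture, conditional on Saito's absolute statement.

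The main obstacle is that \cite[Conjecture 6.8.1]{Sai16} itself remains open in general, so an unconditional proof would require genuinely new input. In favorable situations (e.g.\ when $\mathcal F$ is locally constant, where the formula $ccc_{X/S}(\mathcal F)=\mathrm{rank}\,\mathcal F\cdot c_n(\Omega^{\vee}_{X/S})\cap[X]$ matches the Chern-class computation of $cc_{X/S}(\mathcal F)$, or when $\mathcal F$ admits only isolated characteristic points), one might bypass this by a direct Milnor-type computation following \cite{AS07}, checking that both sides satisfy the same local formula and invoking the index formula (Theorem \ref{indexformula}) together with the proper push-forward compatibility (Corollary \ref{cor:pushccc}) to pin them down globally. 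A secondary technical point is to verify that the cohomological extension to $\mathbb P(E)$ used in the first step is independent of choices; this is a pure functoriality check for the localized Chern-class formalism and should carry over from \cite[\S 6.7]{Sai16} to \'etale cohomology essentially verbatim.
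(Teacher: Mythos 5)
This statement is a \emph{conjecture} in the paper, and the authors give no proof of it; the only remark they make is that when $S$ is a point it reduces to \cite[Conjecture 6.8.1]{Sai16}, which is itself open. So there is no proof in the paper to compare your argument against, and you correctly recognize this by framing your attempt as a conditional reduction rather than a proof.

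That said, the reduction strategy as written has a genuine gap in its first step. You propose to construct a graded cohomological class $ccc_X^n(\mathcal F)\in H^{2n}(X,\Lambda(n))$ by ``imitating in \'etale cohomology the construction of $cc_X^n(\mathcal F)$,'' i.e.\ extending a characteristic cycle to $\mathbb P(E)$ and pulling back along $\bar i$. But this conflates two fundamentally different constructions. The total characteristic class $cc_{X,\bullet}$ is built from the cycle $CC(\mathcal F,X/k)\subset T^*X$ via intersection theory on the compactified cotangent bundle, whereas $ccc_X(\mathcal F)$ (both in Abbes--Saito and in Definition \ref{def:ccc} here) arises from the Verdier pairing on the diagonal and lands \emph{a priori} only in a single degree, $H^0(X,\mathcal K_X)\cong H^{2d}(X,\Lambda(d))$ with $d=\dim X$. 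There is no ``cohomological characteristic cycle'' sitting in $T^*X$ to extend to $\mathbb P(E)$ --- indeed, producing such a refinement is essentially equivalent to Saito's conjecture itself. Consequently, the intermediate object $ccc_X^n(\mathcal F)$ on which the whole reduction hinges is not defined, and the proposed identity $ccc_X^n(\mathcal F)=(-1)^r\cdot ccc_{X/S}(\mathcal F)$ cannot even be stated without it.

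The second step, comparing the relative Verdier pairing with the absolute one along $i\colon X\times_S X\hookrightarrow X\times_k X$ using Lemma \ref{lem:boxtimesTrans} and Corollary \ref{cor:SSKunn}, is a sensible and potentially useful compatibility to establish, and the sanity check in the locally constant case (Chern class computation on both sides) is correct. But as it stands the argument does not reduce Conjecture \ref{conj:ccequality} to \cite[Conjecture 6.8.1]{Sai16}: what you actually need is either a direct microlocal/Milnor-number comparison in the relative setting (in the spirit of \cite{AS07}), or a genuinely new graded refinement of the cohomological class, and neither is supplied.
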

If $S$ is the spectrum of a perfect field, then the above conjecture  is \cite[ Conjecture 6.8.1]{Sai16}.

\subsection{Proper push-forward of relative cohomological characteristic class}
\subsubsection{}
For $i=1,2$, let $f_i\colon X_i\to Y_i$ be a proper morphism between smooth schemes over $S$.
Let $X\coloneq X_1\times_SX_2$, $Y\coloneq Y_1\times_S Y_2$ and $f\coloneq f_1\times_Sf_2$. 
Let $p_i\colon X\to X_i $ and $ q_i\colon Y\to Y_i$ be the canonical projections for $i=1,2$.
Consider a commutative diagram 
\begin{align}
\begin{gathered}
\xymatrix{
X\ar[d]_-f& C\ar[l]_-c\ar[d]^-g\\
Y&D\ar[l]_-d
}
\end{gathered}
\end{align}
of schemes over $S$.
Assume that $c$ is proper. 
Put $c_i=p_i c$ and $d_i=q_id$.
By \cite[Construction 7.17]{Zh15}, we have the following push-forward maps for cohomological correspondence (see also \cite[Expos\'e III, (3.7.6)]{Gro77} if $S$ is the spectrum of a field):
\begin{align}
\label{eq:pfccor}
&f_\ast \colon Hom(c_2^\ast \mathcal L_2, c_1^!\mathcal L_1)\to Hom(d_2^\ast(f_{2!}\mathcal L_2), d_1^!(f_{1\ast}\mathcal L_1) ),\\
\label{eq:pfccor2}
&f_\ast \colon g_\ast R\mathcal Hom(c_2^\ast \mathcal L_2, c_1^!\mathcal L_1)\to R\mathcal Hom(d_2^\ast(f_{2!}\mathcal L_2), d_1^!(f_{1\ast}\mathcal L_1) ).
\end{align}
%We assume that $S$ is a smooth connected scheme over a perfect field $k$, $\pi_1\colon X_1\to S$ and $\pi_2\colon X_2\to S$ are smooth morphisms.  Let $\mathcal E_i$ and $\mathcal F_i$ be  objects of $D^b_c(X_i,\Lambda)$ for $i=1,2$. We put $X\coloneq X_1\times_SX_2 $ and  consider the following cartesian diagram 
%\begin{align}
%\begin{gathered}
%\xymatrix{ X\ar[r]^-{\rm pr_2} \ar[d]_-{\rm pr_1}& X_2 \ar[d]^-{\pi_2}\\ X_1 \ar[r]_-{\pi_1} &S. \ar@{}|\Box[ul]     
%}
%\end{gathered}
%\end{align}

\begin{theorem}[{\cite[Th\'eor\`eme 4.4]{Gro77}}]\label{thm:mainPSILL}
For $i=1,2$, let $f_i\colon X_i\to Y_i$ be a proper morphism between smooth schemes over $S$.
Let $X\coloneq X_1\times_SX_2$, $Y\coloneq Y_1\times_S Y_2$ and $f\coloneq f_1\times_Sf_2$. 
Let $p_i\colon X\to X_i $ and $ q_i\colon Y\to Y_i$ be the canonical projections for $i=1,2$.
Consider the following commutative diagram with cartesian horizontal faces

\[
\begin{tikzcd}[row sep=2.5em]
C'  \arrow[dr,swap,"c'"] \arrow[dd,swap,"f'"] &&
  C \arrow[dd,swap,"g" near start] \arrow[dr] \arrow[ll]\arrow[dl, swap,"c"]\\
& X  &&
  C'' \arrow[dd,"f''"]\arrow[ll,crossing over, swap, "c''" near start] \\
D'  \arrow[dr,swap,"d'"] && D \arrow[dr] \arrow[ll] \ar[dl,"d"] \\
& Y  \arrow[uu,<-,crossing over,"f" near end]&& D''\arrow[ll,"d''"]
\end{tikzcd}
\]
where $c', c'', d'$ and $ d''$ are proper morphisms between smooth schemes over $S$. Let $c_i'=p_i c', c_i''=p_i c'', d_i'=q_i d', d_i''=q_i d''$ for $i=1,2$. Let $\mathcal L_i \in D^b_c(X_i,\Lambda)$ and we put $\mathcal M_i=f_{i\ast}\mathcal L_i$ for $i=1,2$. 
Assume that one of the following conditions holds:
\begin{enumerate}
\item $ X_1\to S$ is $SS(\mathcal L_1, X_1/k)$-transversal;
\item $ X_2\to S$ is $SS(\mathcal L_2, X_2/k)$-transversal.
\end{enumerate}
Then we have the following commutative diagram
\begin{align}\label{eq:thm:mainPSILL} 
\begin{gathered}
\xymatrix{
f_*{c}^\prime_*R\mathcal Hom({c}^{\prime*}_2\mathcal L_2, {c}^{\prime!}_1\mathcal L_1)\otimes^Lf_*{c}^{\prime\prime}_*R\mathcal Hom({c}^{\prime\prime*}_1\mathcal L_1, {c}^{\prime\prime!}_2\mathcal L_2) \ar[r]^-{(1)}\ar[d]^{(2)} & f_*c_*\mathcal K_{C/S}\ar[d]^{(4)} \\
{d}^\prime_*R\mathcal Hom({d}_2^{\prime*}\mathcal M_2, {d}_1^{\prime!}\mathcal M_1)\otimes^L{d}^{\prime\prime}_*R\mathcal Hom({d}_1^{\prime\prime*}\mathcal M_1,{d}_2^{\prime\prime!}\mathcal M_2) \ar[r]^-{(3)} &d_*\mathcal K_{D/S}
}
\end{gathered}
\end{align}
where $(3)$ is given by \eqref{eq:pair4-1}, $(1)$ is the composition of $f_\ast(\eqref{eq:pair4-1})$ with the canonical map $f_\ast c^{\prime}_\ast\otimes^L f_\ast c_\ast^{\prime\prime}\to f_\ast(c_\ast^\prime\otimes c_\ast^{\prime\prime})$, $(2)$ is induced from \eqref{eq:pfccor2}, and $(4)$ is defined by
\begin{align}
f_*c_*\mathcal K_{C/S}\simeq d_\ast g_\ast \mathcal K_{C/S}=d_\ast g_!g^!\mathcal K_{D/S}\xrightarrow{\rm adj}d_\ast\mathcal K_{D/S}.
\end{align}
\end{theorem}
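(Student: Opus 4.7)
The plan is to reduce the commutativity of \eqref{eq:thm:mainPSILL} to the compatibility of proper push-forward with the three elementary building blocks of the relative Verdier pairing, using the relative K\"unneth formulas established in Proposition \ref{prop:fUpperIsomKun} and Corollary \ref{cor:SSKunn}.

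First, I would unwind the two horizontal pairings $(1)$ and $(3)$. Via \eqref{eq:pullbackupp} combined with Corollary \ref{cor:SSKunn}, each of the four $R\mathcal Hom$-terms appearing in \eqref{eq:thm:mainPSILL} admits a canonical identification of the form
\[
R\mathcal Hom(c_2^{\prime*}\mathcal L_2, c_1^{\prime!}\mathcal L_1) \cong c^{\prime!}\bigl(\mathcal L_1 \boxtimes^L_S R\mathcal Hom(\mathcal L_2,\pi_2^!\Lambda_S)\bigr),
\]
and analogously for $c^{\prime\prime}, d^\prime, d^{\prime\prime}$. The transversality hypothesis $(1)$ or $(2)$ is exactly what Corollary \ref{cor:SSKunn} requires; moreover, by \cite[Lemma 3.8 and Lemma 4.2.6]{Sai16} this transversality is inherited by $\mathcal M_i=f_{i\ast}\mathcal L_i$, so the analogous identifications are valid on the $Y$-side.

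Second, after these identifications, each pairing $(1)$ and $(3)$ becomes the composition of three elementary operations: (i) the K\"unneth exchange map $(\mathcal A_1\boxtimes^L_S\mathcal A_2)\otimes^L(\mathcal B_1\boxtimes^L_S\mathcal B_2) \to (\mathcal A_1\otimes^L\mathcal B_1)\boxtimes^L_S(\mathcal A_2\otimes^L\mathcal B_2)$; (ii) the two evaluation maps $R\mathcal Hom(\mathcal L_i,\pi_i^!\Lambda_S)\otimes^L \mathcal L_i \to \pi_i^!\Lambda_S$ for $i=1,2$; and (iii) the relative K\"unneth isomorphism $\mathcal K_{X_1/S}\boxtimes^L_S\mathcal K_{X_2/S}\cong \mathcal K_{X/S}$ of \eqref{eq:KWViso}. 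I would then show that each of (i)--(iii) is compatible with proper push-forward by $f=f_1\times_S f_2$: (i) follows from the K\"unneth formula for proper morphisms \cite[XVII, Th\'eor\`eme 5.4.3]{SGA4T3}; (ii) follows from the fact that the $(f_{i*}, f_i^!)$-adjunction identifies the push-forward of the evaluation map for $\mathcal L_i$ with the evaluation map for $\mathcal M_i$; and (iii) is natural under base change and stable under proper push-forward.

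Third, the vertical arrow $(4)$ is, by construction, $f_\ast c_\ast\mathcal K_{C/S}\cong d_\ast g_\ast\mathcal K_{C/S}=d_\ast g_!g^!\mathcal K_{D/S}\xrightarrow{\mathrm{adj}}d_\ast\mathcal K_{D/S}$, and this is precisely the adjunction produced by combining the push-forward of (ii) with (iii). The commutativity of \eqref{eq:thm:mainPSILL} then follows by a diagram chase through the three building blocks. The hard part will be the bookkeeping: verifying compatibility at the level of morphisms (not merely isomorphic objects), keeping track of which adjunction sits where, and ensuring that all K\"unneth and base-change isomorphisms agree on the nose. In the absolute case $S=\mathrm{Spec}\,k$ this is Grothendieck's \cite[Th\'eor\`eme 4.4]{Gro77}; the relative version follows the same formal pattern but requires the transversality hypothesis to enter essentially---not cosmetically---so that the relative K\"unneth tools of Proposition \ref{prop:fUpperIsomKun} and Corollary \ref{cor:SSKunn} remain applicable at each step of the chase.
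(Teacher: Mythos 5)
Your plan matches the paper's proof: both first transfer the transversality hypothesis from $\mathcal L_i$ to $\mathcal M_i=f_{i*}\mathcal L_i$ via \cite[Lemma 3.8 and Lemma 4.2.6]{Sai16}, then use the relative K\"unneth tools (Proposition \ref{prop:fUpperIsomKun}, Corollary \ref{cor:SSKunn}) to reduce to the diagram chase of \cite[Th\'eor\`eme 4.4]{Gro77}. The paper simply writes out the intermediate commutative diagram (factoring through $\mathcal P,\mathcal Q,\mathcal E,\mathcal F$) that your ``three building blocks'' bookkeeping would produce.
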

If $S$ is the spectrum of a field, this is proved in \cite[Th\'eore\`eme 4.4]{Gro77}. We use the same notation as {\emph{loc.cit.}}
\begin{proof}
By \cite[Lemma 3.8 and  Lemma 4.2.6]{Sai16} and the assumption, one of the following conditions holds:
\begin{enumerate}
\item[(a1)] $ Y_1\to S$ is $SS(\mathcal M_1, Y_1/k)$-transversal;
\item[(a2)] $ Y_2\to S$ is $SS(\mathcal M_2, Y_2/k)$-transversal.
\end{enumerate}
Now we can use the same  proof of \cite[Th\'eor\`eme 4.4]{Gro77}.
We only sketch the main step. Put 
\begin{align}
&\mathcal P=\mathcal L_1\boxtimes^L_S R\mathcal Hom(\mathcal L_2,\mathcal K_{X_2/S}),\quad \mathcal Q=R\mathcal Hom(\mathcal L_1,\mathcal K_{X_1/S})\boxtimes^L_S \mathcal L_2\\
&\mathcal E=\mathcal M_1 \boxtimes^L_SR\mathcal Hom(\mathcal M_2,\mathcal K_{Y_2/S}),\quad
\mathcal F=R\mathcal Hom(\mathcal M_1, \mathcal K_{Y_1/S})\boxtimes_S^L \mathcal M_2.
\end{align}
Then the theorem follows from the  following commutative  diagram
\[
\xymatrix{
f_\ast c_\ast^\prime c^{\prime!}\mathcal P\otimes^L f_\ast c_\ast^{\prime\prime} c^{\prime\prime!}\mathcal Q\ar[rr]\ar[d]&&f_\ast c_\ast c^!(\mathcal P\otimes^L\mathcal Q)\ar[d]\ar[r]&f_\ast c_\ast c^!\mathcal K_{X/S}\ar[d]\\
d_\ast^\prime d^{\prime!}f_\ast\mathcal P\otimes^L d_\ast^{\prime\prime}d^{\prime\prime!}f_\ast\mathcal Q\ar[d]\ar[r]&d_\ast d^!(f_\ast\mathcal P\otimes^L f_\ast\mathcal Q)\ar[r]\ar[d]&
d_\ast d^! f_\ast(\mathcal P\otimes^L\mathcal Q)\ar[r]&d_\ast d^!f_\ast\mathcal K_{X/S}\ar[d]\\
d_\ast^\prime d^{\prime!}\mathcal E\otimes^L d_\ast^{\prime\prime}d^{\prime\prime!}\mathcal F
\ar[r]&d_\ast d^!(\mathcal E\otimes^L\mathcal F)\ar[rr]&& d_\ast d^!\mathcal K_{Y/S}
}
\]
where commutativity can be verified following the same argument of \cite[Th\'eor\`eme 4.4]{Gro77}.
\end{proof}
\begin{corollary}[{\cite[Corollaire 4.5]{Gro77}}]\label{cor:mainPSILL}
Under the assumptions of Theorem \ref{thm:mainPSILL}, we have a commutative diagram
\begin{align}
\xymatrix{
Hom(c_2^{\prime\ast} \mathcal L_2, c_1^{\prime!}\mathcal L_1)\otimes Hom(c_1^{\prime\prime\ast} \mathcal L_1, c_2^{\prime\prime!}\mathcal L_2)\ar[d]_-{\eqref{eq:pfccor}\otimes\eqref{eq:pfccor}}\ar[r]&H^0(C,\mathcal K_{C/S})\ar[d]^-{g_\ast}\\
Hom(d_2^{\prime\ast} f_{2\ast}\mathcal L_2, d_1^{\prime!}f_{1\ast}\mathcal L_1)\otimes Hom(d_1^{\prime\prime\ast} f_{1\ast}\mathcal L_1, d_2^{\prime\prime!}f_{2\ast}\mathcal L_2)\ar[r]&H^0(D,\mathcal K_{D/S}).
}
\end{align}
\end{corollary}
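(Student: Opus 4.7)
The plan is to deduce Corollary \ref{cor:mainPSILL} directly from Theorem \ref{thm:mainPSILL} by applying $R\Gamma(Y,-)$ to the commutative diagram \eqref{eq:thm:mainPSILL} and passing to $H^0$. First I would identify each corner of the resulting square with the groups appearing in the corollary. For the top-left corner, the equality
\begin{equation*}
R\Gamma(Y, f_\ast c'_\ast R\mathcal Hom(c_2^{\prime\ast}\mathcal L_2, c_1^{\prime!}\mathcal L_1)) = R\Gamma(C', R\mathcal Hom(c_2^{\prime\ast}\mathcal L_2, c_1^{\prime!}\mathcal L_1))
\end{equation*}
together with $H^0 R\mathcal Hom = \mathrm{Hom}$ produces $\mathrm{Hom}(c_2^{\prime\ast}\mathcal L_2, c_1^{\prime!}\mathcal L_1)$; the analogous identification handles the second tensor factor and the bottom-left corner, while on the right-hand side one similarly obtains $H^0(C,\mathcal K_{C/S})$ and $H^0(D,\mathcal K_{D/S})$.

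Next I would identify the four edges of the induced square. The horizontal arrows $(1)$ and $(3)$ of \eqref{eq:thm:mainPSILL} are built from the same evaluation and K\"unneth data as \eqref{eq:pair3} and \eqref{eq:pair4-1}, so on $H^0$ they reproduce the relative Verdier pairing \eqref{eq:pair4} applied to $(\mathcal L_1,\mathcal L_2)$ on $(C', C'')$ and to $(\mathcal M_1,\mathcal M_2)$ on $(D', D'')$ respectively. The left vertical arrow $(2)$ is the tensor product of two copies of \eqref{eq:pfccor2}; since $H^0$ of \eqref{eq:pfccor2} is by definition the push-forward \eqref{eq:pfccor}, the left column becomes $\eqref{eq:pfccor}\otimes\eqref{eq:pfccor}$.

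Finally, since $c', c''$ and $f_1, f_2$ are proper, so are $c$, $d$ and $g$, and the right vertical arrow $(4)$ is the composition
\begin{equation*}
f_\ast c_\ast\mathcal K_{C/S} \simeq d_\ast g_\ast\mathcal K_{C/S} = d_\ast g_! g^!\mathcal K_{D/S} \xrightarrow{\mathrm{adj}} d_\ast\mathcal K_{D/S},
\end{equation*}
which on global sections is precisely the proper push-forward $g_\ast\colon H^0(C,\mathcal K_{C/S}) \to H^0(D,\mathcal K_{D/S})$ defined via the counit of the $(g_!,g^!)$-adjunction. Combining these four identifications with the commutativity established in Theorem \ref{thm:mainPSILL} yields the desired square. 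The only step requiring care is the compatibility of the horizontal maps $(1)$ and $(3)$ of Theorem \ref{thm:mainPSILL} with the pairing \eqref{eq:pair4}; this is a bookkeeping exercise through \eqref{eq:pair3} and the relevant adjunctions, and it uses the singular-support transversality hypothesis only through the already-invoked Corollary \ref{cor:SSKunn}, so no new geometric input is needed.
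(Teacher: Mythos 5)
Your proof is correct and is exactly the intended argument: the paper gives no proof for this corollary (it is presented as an immediate consequence of Theorem \ref{thm:mainPSILL}, with a citation to \cite[Corollaire 4.5]{Gro77}), and what you describe — applying $R\Gamma(Y,-)$ to the diagram \eqref{eq:thm:mainPSILL}, identifying the corners via $H^0(Y,f_*c'_*R\mathcal Hom(\cdot))\cong\mathrm{Hom}(\cdot)$ and $H^0(Y,f_*c_*\mathcal K_{C/S})\cong H^0(C,\mathcal K_{C/S})$, recognizing $(1)$ and $(3)$ as the pairing \eqref{eq:pair4}, $(2)$ as $\eqref{eq:pfccor}\otimes\eqref{eq:pfccor}$, and $(4)$ as $g_*$ — is the standard deduction. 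One small caution in phrasing: you cannot literally take $H^0$ of the derived tensor products in the left column and get the tensor of $\mathrm{Hom}$'s; what one actually uses is the cup-product map $H^0(A)\otimes H^0(B)\to H^0(A\otimes^L B)$ composed with $H^0$ of arrow $(1)$ (respectively $(3)$), which is exactly how the pairing \eqref{eq:pair4} is obtained from \eqref{eq:pair4-1} in the paper, so the identification of the horizontal edges you assert is correct once read this way.
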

\begin{corollary}\label{cor:pushccc}
Let $S$ be a smooth connected  scheme over a perfect  field $k$ of characteristic $p$.
Let $f\colon X\rightarrow S$ be a smooth morphism purely of relative dimension $n$ and  $g\colon Y\to S$ a smooth morphism purely of relative dimension $m$.
Assume that $f$ is $SS(\mathcal F, X/k)$-transversal. 
Then for any proper morphism $h\colon X\to Y$ over $S$, 
\begin{align}\label{eq:cor:pushccc1}
\begin{gathered}
\xymatrix{
X\ar[rr]^-h\ar[rd]_-f&&Y\ar[ld]^-g\\
&S
}
\end{gathered}
\end{align}
we have 
\begin{align}\label{eq:cor:pushccc2}
f_\ast ccc_{X/S}(\mathcal F)= ccc_{Y/S}(Rf_\ast\mathcal F)\quad {\rm in}\quad H^{2m}(Y,\Lambda(m)).
\end{align}
\end{corollary}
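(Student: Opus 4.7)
The plan is to deduce the corollary by specializing Corollary \ref{cor:mainPSILL} to the diagonals of $X$ and $Y$, so that both cohomological correspondences appearing in the Verdier pairing become identities and the pairing collapses to the characteristic class of the sheaf on each side.

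Concretely, I would apply Theorem \ref{thm:mainPSILL} (and its consequence Corollary \ref{cor:mainPSILL}) with $X_1=X_2=X$, $Y_1=Y_2=Y$, $f_1=f_2=h$, and $\mathcal L_1=\mathcal L_2=\mathcal F$, so that $\mathcal M_1=\mathcal M_2=Rh_\ast\mathcal F$. For the correspondences I would take $C'=C''=X$ embedded in $X\times_SX$ via the diagonal $\delta_X$, and likewise $D'=D''=Y$ via $\delta_Y$. Then the fiber products $C=C'\times_{X\times_SX}C''$ and $D=D'\times_{Y\times_SY}D''$ are canonically identified with $X$ and $Y$, and the induced map $g\colon C\to D$ is just $h$ itself. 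The transversality hypothesis of Theorem \ref{thm:mainPSILL} reduces exactly to the assumption that $f\colon X\to S$ is $SS(\mathcal F,X/k)$-transversal.

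Next I would plug the pair $(\mathrm{id}_{\mathcal F},\mathrm{id}_{\mathcal F})$ into the upper-left corner of the square of Corollary \ref{cor:mainPSILL}, using the purity isomorphisms $\delta_X^\ast\mathcal F\simeq\delta_X^!\mathcal F\simeq\mathcal F$. By Definition \ref{def:ccc}, the top horizontal pairing sends this pair to $ccc_{X/S}(\mathcal F)\in H^0(X,\mathcal K_{X/S})\simeq H^{2n}(X,\Lambda(n))$, while the right-hand vertical arrow is identified with the proper push-forward $h_\ast\colon H^{2n}(X,\Lambda(n))\to H^{2m}(Y,\Lambda(m))$ induced by the trace $Rh_!\mathcal K_{X/S}\to\mathcal K_{Y/S}$. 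On the other hand, the push-forward map \eqref{eq:pfccor} for cohomological correspondences sends the pair of identities on $\mathcal F$ to the pair of identities on $Rh_\ast\mathcal F$, so that the bottom horizontal pairing evaluates on this image to $ccc_{Y/S}(Rh_\ast\mathcal F)$. Commutativity of the square of Corollary \ref{cor:mainPSILL} then yields the desired equality $h_\ast\,ccc_{X/S}(\mathcal F)=ccc_{Y/S}(Rh_\ast\mathcal F)$ in $H^{2m}(Y,\Lambda(m))$.

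The main subtlety I anticipate is the verification that \eqref{eq:pfccor} genuinely sends $\mathrm{id}_{\mathcal F}$ on the diagonal of $X$ to $\mathrm{id}_{Rh_\ast\mathcal F}$ on the diagonal of $Y$. This is a naturality statement comparing the construction of the proper push-forward of cohomological correspondences from \cite[Construction 7.17]{Zh15} with the unit/counit of the adjunction $(h^\ast,Rh_\ast)$ and the purity isomorphisms on the diagonals; it is essentially formal, but requires some care in unwinding the relevant definitions.
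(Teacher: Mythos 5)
Your proposal is correct and matches the paper's argument: the paper's proof is the single line ``This follows from Corollary~\ref{cor:mainPSILL} and Definition~\ref{def:ccc},'' and your specialization to $X_1=X_2=X$, $Y_1=Y_2=Y$, $f_1=f_2=h$, with $C',C''$ the diagonal of $X$ and $D',D''$ the diagonal of $Y$, and the pair $(\mathrm{id}_{\mathcal F},\mathrm{id}_{\mathcal F})$ in the upper-left corner, is exactly the intended unwinding. (You also correctly write $h_\ast$ and $Rh_\ast$ in the conclusion, whereas the paper's displayed equality \eqref{eq:cor:pushccc2} has a typo and writes $f_\ast$ and $Rf_\ast$ even though $f$ is the structure map to $S$; the subtlety you flag about \eqref{eq:pfccor} sending identities to identities is real but is treated as part of the cited construction from \cite{Zh15} and is not separately addressed in the paper either.)
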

\begin{proof}
This follows from Corollary \ref{cor:mainPSILL} and Definition \ref{def:ccc}.
\end{proof}

\end{document}